\documentclass[letterpaper, english]{amsart}

\title{Holonomic and perverse logarithmic D-modules}
\author{Clemens Koppensteiner}
\address{Institute for Advanced Study, 1 Einstein Dr., Princeton, NJ, 08540, USA}
\email{clemens@ias.edu}
\author{Mattia Talpo}
\address{Department of Mathematics, Imperial College London, London SW7 2AZ, UK}
\email{m.talpo@imperial.ac.uk}

\subjclass[2010]{14F10, 14A99, 16A49} 

\usepackage[utf8]{inputenc}
\usepackage[T1]{fontenc}
\usepackage{lmodern}
\usepackage{microtype}

\usepackage{amsmath,amsfonts,amssymb}
\usepackage{mathtools,extpfeil,xfrac}
\usepackage{amsthm,thmtools}

\usepackage{enumitem}
\setlist[enumerate,1]{label=(\roman*)}
\setlist[enumerate,2]{label=(\alph*)}
\setlist[enumerate,3]{label=(\Alph*)}
\setlist[enumerate,4]{label=\arabic*.}

\usepackage[
    backend=biber,
    style=alphabetic,
    maxnames=100,
    maxalphanames=100,
]{biblatex}

\addbibresource{references.bib}

\DeclareLabelalphaTemplate{
    \labelelement{
        \field[final]{shorthand}
        \field{label}
        \field[varwidthlist,strside=left,names=4]{labelname}
    }
}

\DeclareFieldFormat{labelalpha}{#1}
\DeclareFieldFormat{extraalpha}{#1}

\newcommand{\stackcite}[1]{\cite[\href{http://stacks.math.columbia.edu/tag/#1}{Tag~#1}]{stacks-project}}

\usepackage{mparhack}  
\usepackage{todonotes}
\usepackage{csquotes}
\usepackage[overload]{textcase}
\usepackage{url}\urlstyle{rm}

\usepackage[colorlinks=false,unicode=true,bookmarksdepth=2]{hyperref}
\usepackage{bookmark}


\theoremstyle{plain}
\newtheorem{Thm}{Theorem}[section]

\newtheorem{Prop}[Thm]{Proposition}
\newtheorem{Cor}[Thm]{Corollary}

\newtheorem{Lem}[Thm]{Lemma}

\newtheorem{DefLem}[Thm]{Definition and Lemma}

\theoremstyle{definition}
\newtheorem{Def}[Thm]{Definition}

\newtheorem*{Notation}{Notation}

\theoremstyle{remark}
\newtheorem{Rem}[Thm]{Remark}

\newtheorem{Ex}[Thm]{Example}




\newcommand\kk{\Bbbk}               
\newcommand\CC{\mathbb{C}}          
\newcommand\NN{\mathbb{N}}          
\newcommand\ZZ{\mathbb{Z}}          
\newcommand\LL{{\mathbb{L}}}        
\newcommand\RR{{\mathbb{R}}}        
\newcommand\Lotimes{\otimes^\LL}    
\newcommand\from{\leftarrow}
\DeclareMathOperator\Hom{Hom}
\DeclareMathOperator\Ext{Ext}
\newcommand\cat\mathbf
\newcommand\ideal\mathfrak
\newcommand\res[2]{\mathchoice{\left.#1\right|_{#2}}{#1|_{#2}}{#1|_{#2}}{#1|_{#2}}} 
\newcommand\rquot[2]{
    \mathchoice%
        {\left.#1\kern-0.2ex\middle/\kern-0.3ex\lower0.7ex\hbox{$\displaystyle #2$}\right.}%
        {\left.#1\middle/#2\right.}%
        {\left.#1\middle/#2\right.}%
        {\left.#1\middle/#2\right.}%
}

\newcommand\as[2][]{\mathbb A^{#2}_{#1}}        
\DeclareMathOperator\Spec{Spec}     

\newcommand\sheaf\mathcal
\renewcommand\O{\sheaf O}
\newcommand\sheafHom{\underline{\Hom}} 
\newcommand\sheafExt{\underline{\Ext}} 
\newcommand\DD{\mathbb{D}}          
\newcommand\canon[1][X]{\omega_{#1}}    
\newcommand\DC{\sheaf R}            
\newcommand\catCoh[1]{\cat{Coh}(#1)}                        
\newcommand\catDCoh[2][]{\cat D^{#1}_{\mathrm{coh}}(#2)}    
\newcommand\catDbCoh[1]{\catDCoh[b]{#1}}                    
\newcommand\catDQCoh[2][]{\cat D^{#1}_{\mathrm{qc}}(#2)}    
\newcommand\catDPlusCoh[1]{\cat D^+\catCoh{#1}}
\DeclareMathOperator\supp{supp}     
\DeclareMathOperator\gldim{gldim}   


\newcommand\logdim{\operatorname{log\;dim}}



\newcommand\DVerdier{\DD^{\mathrm{Ve}}}    
\newcommand\lD{\sheaf{D}}           

\newcommand\catDMod[2][]{\cat{Mod}_{#1}(\lD_{#2})}
\newcommand\catDDMod[2][]{\cat{D}^{#1}(\lD_{#2})}               
\newcommand\catDbDMod[2][]{\cat{D}^b_{#1}(\lD_{#2})}            
\newcommand\catDModcoh[1]{\catDMod[\mathrm{coh}]{#1}}           
\newcommand\catDcohDMod[2][]{\cat{D}_{\mathrm{coh}}^{#1}(\lD_{#2})} 
\newcommand\catDbcohDMod[1]{\catDcohDMod[b]{#1}}                
\newcommand\catDbqcohDMod[1]{\cat{D}_{\mathrm{qc}}^b(\lD_{#1})} 

\newcommand\Ch{\mathrm{Ch}}         

\newcommand\epb{{*}}                
\newcommand\spf{{\bullet}}          

\newcommand\gr{\operatorname{gr}}       
\newcommand\tauls{\tau^{\mathrm{p}}}    
\newcommand\taustand{\tau}              

\newcommand\lsDqcoh{{}^{\mathrm{p}}\cat D_{\mathrm{qc}}}
\newcommand\lsH{{}^{\mathrm p}\mkern-2.5muH}

\newcommand{\cD}{\mathcal{D}}
\newcommand{\cO}{\mathcal{O}}
\newcommand{\cT}{\mathcal{T}}
\newcommand{\cK}{\mathcal{K}}

\newcommand{\bA}{\mathbb{A}}

\newcommand{\bD}{\mathbb{D}}
\newcommand{\bN}{\mathbb{N}}
\newcommand{\bZ}{\mathbb{Z}}

\newcommand{\TV}[1]{\mathsf{A}_{#1}}
\newcommand{\ITV}[2]{\mathsf{A}_{#1,#2}}
\newcommand{\gp}{\mathrm{gp}}

\newcommand\RomanNum[1]{#1} 

\begin{document}

\begin{abstract}
    We introduce the notion of a holonomic D-module on a smooth (idealized) logarithmic scheme and show that Verdier duality can be extended to this context.
    In contrast to the classical case, the pushforward of a holonomic module along an open immersion is in general not holonomic.
    We introduce a \enquote{perverse} t-structure on the category of coherent logarithmic D-modules which makes the dualizing functor t-exact on holonomic modules.
    Conversely this t-exactness characterizes holonomic modules among all coherent logarithmic D-modules.
    We also introduce logarithmic versions of the Gabber and Kashiwara--Malgrange filtrations.
\end{abstract}

\maketitle

\setcounter{tocdepth}{1}
\tableofcontents

\section{Introduction}

Logarithmic geometry \cite{Kato, Handbook} is an enhanced version of algebraic geometry, where on top of the usual scheme structure one also has an additional sheaf of monoids, that keeps track of additional data of interest. A prototypical example is a smooth variety $X$ equipped with a simple normal crossings divisor $D$, that is thought of as a ``boundary'' in $X$. In this case the log structure on $X$ is essentially equivalent to the datum of $D$. More exotic examples are given by fibers of a morphism of pairs $(X,D)\to (Y,E)$ of this kind.

One of the main reasons for working with log schemes is that sometimes a variety is singular in the classical sense, but can be equipped with a log structure that makes it ``smooth'' in the logarithmic category, for an appropriate meaning of the term. With smoothness comes an array of useful tools that are otherwise unavailable. For example, every toric variety has a canonical log structure, and the resulting log scheme is smooth in the logarithmic sense. If one allows the base field to acquire a non-trivial log structure as well (or if one considers idealized log structures), then even more varieties become smooth in this generalized sense.

One instance of this phenomenon is that a smooth log scheme always has a locally free sheaf of differentials.
Thus the language of logarithmic geometric lends itself very well to the study of bundles with a logarithmic connection.
For the purpose of this introduction, let us fix a smooth complex log scheme $X$.
For example, $X$ might be given by a smooth complex variety together with a boundary given by a simple normal crossings divisor $D$.
In this setting a log integrable system on $X$ is a coherent sheaf with a connection which potentially has logarithmic singularities along $D$.
Kato--Nakayama \cite{KatoNakayama:1999:LogBettiLogEtaleLogDeRham,IllusieKatoNakayama:2005:QuasiunipotentLogRiemannHilbert} and Ogus \cite{Ogus:2003:LogarithmicRiemannHilbertCorrespondence} extended the classical Riemann--Hilbert correspondence to log integrable systems.
In essence, they establish an equivalence between the category of log integrable systems and \enquote{locally constant sheaves} on the so-called Kato--Nakayama space $X^{\log}$ attached to $X$.

The category of coherent sheaves with logarithmic connection can naturally be extended to the category of logarithmic D-modules.
The basic formalism for logarithmic D-modules is entirely parallel to the one for D-modules in classical geometry, so it is natural to ask whether the Riemann--Hilbert correspondence can be extended to an equivalence between some category of \enquote{regular holonomic} logarithmic D-modules and some notion of \enquote{perverse} sheaves on $X^{\log}$.

\subsection*{Holonomic D-modules in logarithmic geometry}

In this paper, as a first step towards this conjectural Riemann--Hilbert correspondence, we define and investigate a category of holonomic logarithmic D-modules.
Before we go into details let us show with an example why this is not entirely trivial.

Let $X$ be the log line, i.e.~the scheme $\as[\CC] 1$ with the logarithmic structure given by the divisor $\{0\}$.
In this case the sheaf $\lD_X$ of logarithmic differential operators is the subsheaf of the usual sheaf of differential operators generated by the coordinate $x$ and the log vector field $x\frac{\partial}{\partial x}$.
Thus for example the structure sheaf and the skyscraper $\CC$ at the origin (with action $x\frac{\partial}{\partial x} \cdot 1 = 0$) are examples of logarithmic D-modules on the log line.

The characteristic variety of such modules can be defined as in the classical case.
It is a conical subvariety of the logarithmic cotangent bundle.
Since both the structure sheaf and the skyscraper at the origin are coherent $\O_X$-modules, their characteristic varieties are contained in the zero section.
Hence their dimensions are one and zero respectively.
We see from this example that we cannot expect the classical Bernstein inequality (stating that the dimension of the characteristic variety is always at least equal to the dimension of $X$) to hold in the logarithmic context.
On the other hand, since both of these sheaves are log integrable systems, they should certainly be considered to be \enquote{holonomic}.

Geometrically, we can see the failure of the Bernstein inequality as the fact that the logarithmic cotangent bundle only has a canonical Poisson structure, rather than a canonical symplectic structure.

In the simple example above one can define Verdier duality in the usual way as (shifted) derived $\sheafHom$ to $\lD_X$, twisted by a line bundle.
A simple computation (see Example~\ref{ex:duality_fails}) shows that the dual of $\O_X$ is concentrated in cohomological degree $0$, while the dual of the skyscraper is concentrated in degree $1$.
This is in contrast with the fundamental fact in the classical theory that holonomic D-modules can be characterized as those coherent D-modules for which the dual is concentrated in degree $0$.
This statement ultimately underlies the existence of most of the six functor formalism for classical holonomic D-modules.

Finally, again for the case of the log line $X=\as 1$, consider the open embedding $j\colon U = X \setminus \{0\} \hookrightarrow X$.
The structure sheaf $\O_U$ is a holonomic $\lD_U$-module, but the pushforward $j_*\O_U = \CC[x,x^{-1}]$ cannot be a holonomic logarithmic $\lD_X$-module.
Indeed, it is not even coherent.
Thus one cannot hope that the pushforward always preserves holonomicity.
Related to this is the fact that in the logarithmic context holonomic D-modules are generally no longer artinian.
For example the holonomic $\lD_X$-module $\O_X$ has the infinite descending chain of submodules $\O_X \supseteq x\O_X \supseteq x^2\O_X \supseteq \cdots$.

In this paper we provide solutions to these problems.
Firstly we note that the issue with the skyscraper at the origin in the above example is that the support is too small when measured classically.
However, the support is entirely contained in a log stratum.
Hence we should view the characteristic variety as an \emph{idealized} log variety and measure its dimension as such.
With this we can prove the following logarithmic version of the Bernstein inequality (Theorem~\ref{thm:log_bernstein}):
\[
    \logdim\Ch(\sheaf F) \ge \logdim X.
\]
In other words, the Bernstein inequality holds if we use a log geometric version of the dimension.
We call a logarithmic D-module \emph{holonomic} if this is an equality.

For the example of the log line $\as 1$, any subset of the fiber of the log cotangent bundle over the origin has log dimension one more than classical dimension.
Thus the characteristic variety of the skyscraper $\CC$ at the origin, which is just the origin itself, has log dimension $1$.
So this sheaf is holonomic.
On the other hand the characteristic variety of the skyscraper $\CC[x\frac{\partial}{\partial x}]$ is the whole fiber over the origin of the cotangent bundle and thus has log dimension $2$.
So in contrast to the classical theory the skyscraper $\CC[x\frac{\partial}{\partial x}]$ is not holonomic.
Indeed this meshes well with expectations on the constructible side via the conjectural Riemann--Hilbert correspondence.

Since we only require the spaces in this paper to be logarithmically smooth (and hence the underlying classical scheme is allowed to have singularities), the definition of duality needs some care.
Fortunately, we can adapt the existing theory of rigid dualizing complexes from non-commutative geometry.
This does however leave open the problem of duality not restricting to an exact autoequivalence of the holonomic abelian subcategory. A major part of this paper is devoted to a solution of this issue.

Inspired by the work of Kashiwara~\cite{Kashiwara:2004:tStructureOnHolonomicDModuleCoherentOModules} we define a kind of perverse t-structure on the derived category of logarithmic D-modules, which we call the \emph{log perverse t-structure}.
(We will recall the definition of a t-structure in Section~\ref{sec:t-structure}.)
The upshot is that the duality functor is exact when viewed as a functor from holonomic modules to the heart of the log perverse t-structure and vice-versa, that this characterizes the subcategory of holonomic modules.
For example this can be used to generalize the classical proof that proper pushforward preserves holonomicity given in \cite[Section~5.4]{mebkhout}.

This still leaves the unfortunate fact that the pushforward of a holonomic logarithmic D-modules does not need to be holonomic -- or even coherent -- in general.
In some cases, such as the example of $\CC[x,x^{-1}]$ mentioned above, this can be solved by introducing a generalization of the Kashiwara--Malgrange V-filtration, a topic that we will treat in detail in future work.
However, as Remark~\ref{rem:no_extension} shows, sometimes it is impossible to find any extension of a holonomic module from an open subvariety.
This is mainly a problem when one wants to reduce a statement to the compact case.
The upshot is that one has to be quite careful when compactifying, cf.~Remark~\ref{rem:good_compactification}. 

Finally, let us mention that since the category of holonomic logarithmic D-modules is not self-dual under Verdier duality, neither is the expected category of \enquote{perverse sheaves} on the constructible side of the logarithmic Riemann--Hilbert correspondence.
In fact the existence of the log perverse t-structure implies that there exist two categories of \enquote{perverse sheaves}, corresponding to holonomic D-modules either via the de-Rham or the solution functor.
This might explain why these categories so far have not appeared in the literature.
We will discuss the constructible side in a separate upcoming paper.

\subsection*{Outline of the paper}

Let us briefly outline the content of each section of this paper.
In Section~\ref{sec:prelims} we collect and review existing results from logarithmic geometry and the theory of non-commutative dualizing complexes.
This also serves to fix notation.
In Section~\ref{sec:log_D_modules} we define and examine the various categories of logarithmic D-modules.
In particular, Section~\ref{sec:Dmods-and-strata} contains many of the main lemmas of how logarithmic D-modules behave with respect to restriction and local cohomology along log strata.
The main result is a vanishing theorem for local cohomology along such strata.
Finally, in Section~\ref{sec:holonomics}, we show the logarithmic Bernstein inequality and define the category of holonomic logarithmic D-modules.

The rest of the paper concerns the pay-off of this work.
In Section~\ref{sec:t-structure} we define the log perverse t-structure and study its interaction with duality.
In Section~\ref{sec:filtrations}, we introduce logarithmic analogues of the Gabber and Sato filtrations of holonomic D-modules.

\subsection*{Future plans}

As noted above, our main goal is to extend the Riemann--Hilbert correspondence for logarithmic connections of Kato--Nakayama and Ogus to logarithmic D-modules.
Thus in future work we will define categories of perverse sheaves on the Kato--Nakayama space and investigate the interaction of the \enquote{enhanced} de Rham functor of \cite{Ogus:2003:LogarithmicRiemannHilbertCorrespondence} with the constructions in the present paper on the one hand and the yet-to-be-defined category of constructible sheaves on the Kato--Nakayama space.
Of particular importance will be a generalized version of the Malgrange--Kashiwara V-filtration, which corresponds to the \enquote{$\Lambda$-filtration} on the constructible side.
This will also yield further insight into the behaviour of nearby cycles in the logarithmic setting, generalizing the results of \cite{IllusieKatoNakayama:2005:QuasiunipotentLogRiemannHilbert}.
Moreover, we intend to investigate how root stacks \cite{borne-vistoli, talpo-vistoli} and parabolic sheaves interact with this picture, especially in view of their relationship with the Kato--Nakayama space \cite{knvsrootI, knvsrootII, knparabolic}.

We are also interested in applications of the theory of logarithmic D-modules to geometric representation theory (where the classical theory of D-modules already plays a very important role).
In particular, we plan to use the theory in the investigations of the first-named author on the cohomological support theory of D-modules \cite{Koppensteiner:arXiv:HochschildCohomologyOfTorusEquivariantDModules}.
Recently Ben-Zvi, Ganev and Nadler reformulated the celebrated Beilinson--Bernstein localization theorem in terms of filtered D-modules on the wonderful compactification \cite{BGN:BB-localization}.
This construction might be best viewed via the logarithmic structure on the wonderful compactification and it will be interesting to see its interaction with a logarithmic Riemann--Hilbert correspondence.
Finally, we expect that the constructions of this paper have analogues in positive characteristic and arithmetic situations.

\subsection*{Acknowledgements}

The work of C.K. was partially supported by the National Science Foundation under Grant No.~1638352, as well as the Giorgio and Elena Petronio Fellowship Fund. M.T. was partially supported by a PIMS postdoctoral fellowship. We are grateful to Nathan Ilten for useful exchanges, and to the anonymous referee for several helpful comments.

\subsection*{Notations and assumptions}

Unless stated otherwise, throughout this note $X$ will denote a separated, smooth (in the logarithmic sense) idealized log scheme, of finite type over an algebraically closed base field $\kk$ of characteristic $0$.
Such an $X$ comes with a stratification in locally closed subschemes. We will write $X^k$ for the closure in $X$ of the union of all strata of codimension $k$, and call this subset a ``logarithmic stratum''.
Thus we have a descending sequence of closed subschemes
\[
    X = X^0 \supseteq X^1 \supseteq \cdots \supseteq X^{\dim X} \supseteq X^{\dim X+1} = \emptyset.
\]
For convenience we set $X^k = X$ for $k < 0$ and $X^k = \emptyset$ for $k > \dim X$.
We usually endow each $X^k$ with the induced idealized log structure, making it a smooth idealized log scheme.
We will write $TX$ and $T^*X$ for the logarithmic tangent and cotangent bundle respectively.
Similarly, $\Omega_X$ will be the sheaf of logarithmic differentials and $\canon[X]$ the logarithmic canonical bundle.

Unless mentioned otherwise all functors between categories of sheaves are derived, even though we drop the markers $\RR$ and $\LL$.
In particular, $\sheafHom$ is the (right) derived internal Hom and $\otimes$ the (left) derived tensor product.

Unless mentioned otherwise, the term ``$\lD_X$-module'' will always mean a logarithmic D-module with respect to the log structure on $X$.

\section{Preliminaries}\label{sec:prelims}

In this section we gather for the convenience of the reader the basic definitions and some facts about log geometry and coherent duality that are required for the rest of the paper. 

\subsection{Smooth (idealized) logarithmic schemes}

We refer to \cite{Kato,Handbook,Ogus:logbook} for more detailed treatments of logarithmic geometry. On top of logarithmic schemes, we will often have to use \emph{idealized} logarithmic schemes, a variant of the notion, first introduced by Ogus (to the best of our knowledge).

All monoids in this paper will be commutative, and the operation will be written additively. They will also always be fine and saturated (see \cite[Section~\RomanNum{I}.1.3]{Ogus:logbook} for the definitions). Given a monoid $P$, we will denote by $\kk[P]$ the monoid algebra of $P$, i.e. the $\kk$-algebra generated by variables $t^p$ with $p\in P$, with relations $t^0=1$ and $t^{p+p'}=t^p\cdot t^{p'}$ for every $p,p'\in P$.

\begin{Def}\label{def:smooth log scheme}
A \emph{smooth log scheme} is a pair $(X,D)$ consisting of a scheme $X$ equipped with a closed subscheme $D\subseteq X$, with the following property: around every point $x\in X$, the pair $(X,D)$ is \'etale locally isomorphic to an affine normal toric variety $\TV{P}=\Spec \mathbb{C}[P]$, equipped with its reduced toric boundary $\Delta_P$.
\end{Def}

Note that every such $X$ has to be smooth away from $D$, but can be singular along the boundary. Another name used in the literature for the open immersion $X\setminus D\subseteq X$ is \emph{toroidal embedding} \cite{toroidal}.

If $(X,D)$ is a smooth log scheme, the sheaf $M=\{f\in \cO_X\mid f|_{X\setminus D}\in \cO_{X\setminus D}^\times\}$ is a sheaf of (multiplicative) submonoids of $\cO_X$. Together with the inclusion $M\subseteq \cO_X$, this sheaf of monoids gives a \emph{log structure} on the scheme $X$, essentially equivalent to the datum of $D$.

\begin{Def}
A \emph{log scheme} is a triple $(X,M,\alpha)$, where $X$ is a scheme, $M$ is a sheaf of monoids on the small \'etale site of $X$, and $\alpha\colon M\to \cO_X$ is a homomorphism of monoids, where $\cO_X$ is equipped with multiplication of sections, such that the induced morphism $\alpha|_{\alpha^{-1}\cO_X^\times}\colon \alpha^{-1}\cO_X^\times\to \cO_X^\times$ is an isomorphism.
\end{Def}

Every smooth log scheme is in particular a log scheme, but log schemes are more general. In particular a log structure $(M,\alpha)$ can be pulled back along arbitrary morphisms of schemes, something that cannot be done with pairs $(X,D)$: given a log scheme $(X,M,\alpha)$ and a morphism of schemes $f\colon Y\to X$, one can pull back $M$ and $\alpha$ to $Y$, obtaining a homomorphism of sheaves of monoids
\[
  f^{-1}\alpha\colon f^{-1}M\to f^{-1}\cO_X\to \cO_Y,
\]
but this is not a log structure in general. It can be made into one in a universal way, by considering the pushout $f^*M=f^{-1}M\oplus_{(f^{-1}\alpha)^{-1}\cO_Y^\times} \cO_Y^\times$ with the induced homomorphism to $\cO_Y$. In particular, the log structure associated with a pair $(X,D)$ as above can be restricted to components of $D$.

The quotient sheaf $\overline{M}=M/\cO_X^\times$ is called the \emph{characteristic sheaf} of the log scheme $(X,M,\alpha)$. If the stalks of $\overline{M}$ are fine and saturated (in particular this is true if the log scheme is smooth), then the stalks of the associated sheaf of groups $\overline{M}^\gp$ are torsion-free abelian groups of finite rank.

A morphism of log schemes $(X,M,\alpha)\to (Y,N,\beta)$ is a morphism of schemes $f\colon X\to Y$, together with a homomorphism of monoids $f^\flat\colon f^{-1}N\to M$, such that the composite $f^{-1}N\to M\to \cO_X$ coincides with the composite $f^{-1}N\to f^{-1}\cO_Y\to \cO_X$. Such a morphism is called \emph{strict} if $f^\flat$ induces an isomorphism $f^{-1}\overline{N}\xrightarrow{\cong} \overline{M}$.

The category of log schemes contains the category of schemes as a full subcategory: every scheme $X$ has a \emph{trivial log structure}, given by the sheaf of monoids $M=\cO_X^\times$ with the inclusion $j\colon \cO_X^\times \hookrightarrow\cO_X$. Every log scheme $(X,M,\alpha)$ admits a canonical map to the corresponding trivial log scheme $(X,\cO_X^\times,j)$. The functor that equips a scheme with the trivial log structure is right adjoint to the forgetful functor from log schemes to schemes. 

We also recall the concept of a \emph{chart}: assume that $X$ is a scheme, $P$ is a monoid and $\phi\colon P\to \cO(X)$ is a homomorphism of monoids. Then we obtain a homomorphism of sheaves of monoids $\underline{P}\to \cO_X$ (where $\underline{P}$ denotes the constant sheaf with stalks $P$), and by applying the same construction outlined above for the pullback log structure, this induces a log structure $M_\phi\to \cO_X$ on $X$. If $(X,M, \alpha)$ is a log scheme, a chart is a homomorphism of monoids $P\to M(X)$, giving $\phi\colon P\to M(X)\xrightarrow{\alpha(X)} \cO(X)$ such that the induced morphism of log structures $M_\phi\to M$ is an isomorphism. For every fine monoid $P$, the log structure on $\TV{P}$ induced as described above by the toric boundary $\Delta_P$ has a chart, given by the tautological homomorphism of monoids $P\to\kk[P]$.

A chart for $(X,M,\alpha)$ is the same thing as a strict morphism $X\to \TV{P}$ for some fine monoid $P$. The existence of local models in Definition \ref{def:smooth log scheme} is precisely the requirement that \'etale locally on $X$ there are charts $X\to \TV{P}$, that are moreover \'etale.

Every smooth log scheme has a stratification $\{S^k\}_{k\in \bN}$, given by the locally closed subsets $S^k\subseteq X$ where the sheaf of free abelian groups $\overline{M}^\gp$ has rank $k$.
We will denote by $X^k$ the Zariski closure of $S^k$ in $X$. This is the subset of $X$ of points where the rank of $\overline{M}^\gp$ is \emph{at least} $k$. We will refer to the $X^k$ as \emph{logarithmic strata} of $X$. There is a descending chain of inclusions
\[
    X = X^0 \supseteq X^1 \supseteq \cdots \supseteq X^{\dim X} \supseteq X^{\dim X+1} = \emptyset.
\]
Unless otherwise mentioned, these closed subsets will always be equipped with the reduced subscheme structure. The subscheme $X^k$ (if it is non-empty) has codimension $k$ in $X$, and for every $k$ the scheme $X^k\setminus X^{k+1}$ is classically smooth (because this is true for the orbit stratification of toric varieties).

\begin{Def}
    Let $Z$ be a closed subset of a smooth log scheme $X$.
    Then the \emph{log dimension} of $Z$ is
    \[
        \logdim Z =  \max_k \bigl(\dim ( Z \cap X^k) + k\bigr).
    \]
\end{Def}

In particular note that the log dimension of $X$ itself coincides with $\dim X$.

As we mentioned, in order to work on the log strata we will need to consider more general log schemes, \'etale locally modelled on the boundary of an affine toric variety rather than on the whole variety. These objects are also smooth, but in an even laxer sense than smooth log schemes. Recall that an \emph{ideal} of a monoid is a subset $K\subseteq P$, such that $i+p\in K$ for every $i\in K$ and $p\in P$.

\begin{Def}
An \emph{idealized} log scheme is a pair consisting of a log scheme $(X,M,\alpha)$, and a sheaf of ideals $I\subseteq M$ such that $\alpha(I)\subseteq \{0\}\subseteq \cO_X$.
\end{Def}

Every log scheme can be made into an idealized log scheme, via the empty sheaf of ideals $I=\emptyset\subseteq M$. For every idealized log scheme $(X,M,\alpha,I)$ there is a canonical morphism $(X,M,\alpha, I)\to (X,M,\alpha,\emptyset)$, and the resulting functor from log schemes to idealized log schemes is the right adjoint of the forgetful functor from idealized log schemes to log schemes.

For a fine saturated monoid $P$ with an ideal $K$, set $\ITV{P}{K}=\Spec \kk[P]/\langle K \rangle$, where $\langle K\rangle$ denotes the ideal of $\Spec \kk[P]$ generated by elements of the form $t^i$ with $i\in K$. This is an idealized log scheme: if $\alpha_{P,K}\colon M_{P,K}\to \cO_{\ITV{P}{K}}$ denotes the tautological log structure on $\ITV{P}{K}\subseteq \TV{P}$, then the sheaf of ideals of $M_{P,K}$ generated by the image in $M_{P,K}$ of the subsheaf $\underline{K}\subseteq \underline{P}$ via the chart morphism $\underline{P}\to M_{P,K}$ is sent to $0$ by $\alpha_{P,K}$. More generally, a \emph{chart} for an idealized log scheme $(X,M,\alpha, I)$ is a strict morphism $X\to \ITV{P}{K}$, where $P$ is a fine saturated monoid and $K\subseteq P$ is an ideal, such that the image of the induced morphism $\underline{K}\to M$ generates the sheaf of ideals $I\subseteq M$. 

If $(X,M,\alpha)$ is a smooth log scheme, the closed subsets $j\colon X^k\hookrightarrow X$ are naturally idealized log schemes: the log structure is the pullback of the log structure of the ambient, and the sheaf of ideals $I\subseteq j^*M$ is the subsheaf $(j^*\alpha^{-1})\{0\}$.

In analogy with Definition \ref{def:smooth log scheme}, we define smoothness for idealized log schemes by asking that they be locally modelled on idealized log schemes of the form $\ITV{P}{K}$.

\begin{Def}
An idealized log scheme $(X,M,\alpha, I)$ is \emph{smooth} if \'etale locally around every point $x\in X$ there is a strict morphism $X\to \ITV{P}{K}$ for some fine saturated monoid $P$ and ideal $K\subseteq P$, which is moreover \'etale.
\end{Def}

Note that by definition every idealized log scheme of the form $\ITV{P}{K}$ is smooth. In particular, if $\{t^{p_i}=0\}$ is a set of monomial equations for the boundary $\Delta_P$ with the reduced structure, then $\ITV{P}{\langle p_i\rangle}$ is a smooth idealized log scheme whose underlying scheme is $\Delta_P$. At the other extreme, if $P$ is sharp, for the ``maximal ideal'' $K=P\setminus \{0\}$, the underlying scheme of the smooth idealized log scheme $\ITV{P}{P\setminus \{0\}}$ is the torus-fixed point of the toric variety $\TV{P}$ (with non-trivial log structure, with monoid $M=P\oplus \kk^\times$). For $P=\bN$, the log scheme $\ITV{\bN}{\bN\setminus \{0\}}$ (i.e. the origin of $\bA^1$ equipped with the pullback log structure) is usually called the \emph{standard log point}.

The closed subschemes $X^k$ of a smooth log scheme, equipped with the idealized log structure described above, are smooth idealized log schemes. Conversely, every smooth idealized log scheme can be \'etale locally embedded as a closed subscheme of a smooth log scheme (basically by definition).

\begin{Rem}
There is a general notion of smoothness (usually called ``log smoothness'') of morphisms of (idealized) log schemes. The requirements in the above definitions are equivalent to asking that the structure morphism from the (idealized) log scheme to $\Spec\kk$ equipped with the trivial log (and idealized) structure is smooth. We opted to give ad hoc definitions in this paper, because we do not need the ``full'' machinery.
\end{Rem}

Smooth idealized log schemes also admit stratifications. From now on, let us assume that all smooth idealized log schemes are also \emph{connected}. In this case, the stalks of the sheaf $\overline{M}^\gp$ have the same rank $r$ on a dense open subset of $X$. We call this number the \emph{generic rank} of $(X,M,\alpha,I)$.

If we define $S^k$ and $X^k$ as the subsets of $X$ where the rank of $\overline{M}^\gp$ is exactly (resp. at least) $r+k$, we have a  descending chain of closed subsets
\[
    X = X^0 \supseteq X^{1} \supseteq \cdots \supseteq X^{\dim X} \supseteq X^{\dim X+1} = \emptyset.
\]
As in the non-idealized case, these closed subsets will be always equipped with the reduced subscheme structure.

\begin{Def}
    Let $Z$ be a closed subset of a smooth idealized log scheme $X$, with generic rank $r$.
    Then the \emph{log dimension} of $Z$ is
    \[
        \logdim Z =  \max_k \bigl(\dim ( Z \cap X^k) + k\bigr) + r.
    \]
\end{Def}

In particular, the log dimension of $X$ itself is $\dim X + r$.

\begin{Notation}
From now on, in order to lighten the notation we will denote a (possibly idealized) log scheme $(X,M,\alpha, I)$ just by $X$, and use subscripts for the other symbols (i.e. we will write $M_X,\alpha_X, I_X$) if necessary. Unless there is risk of confusion, we will denote the underlying scheme of the log scheme $X$ again by $X$. If there is such a risk, we will denote the underlying scheme by $\underline{X}$. 
\end{Notation}

\subsection{Logarithmic differentials}

Every log scheme carries a sheaf of logarithmic differentials, constructed in analogy with the sheaf of K\"{a}hler differentials in the non-logarithmic case. This gives rise in particular to a logarithmic cotangent bundle and a sheaf of differential operators, that we use to define and study D-modules, as in the classical theory. 

\begin{Def}\label{def:log derivations}
Let $X$ be a smooth log scheme. The \emph{logarithmic tangent sheaf} $\cT_X$ of $X$ is the subsheaf of the usual tangent sheaf, consisting of the $\kk$-derivations of $\cO_X$ that preserve the ideal defining the closed subscheme $X^1\subseteq X$.
\end{Def}

This agrees with a more general construction, that applies to arbitrary logarithmic schemes \cite[Section~\RomanNum{IV}.1]{Ogus:logbook}. Given an arbitray log scheme $X$, one can define a concept of \emph{logarithmic derivation} with values into an $\cO_X$-module \cite[Definition~\RomanNum{IV}.1.2.1]{Ogus:logbook}. The tangent sheaf $\cT_X$ is then the sheaf of logarithmic derivations with values in $\cO_X$.

 As in the classical case, there is an $\cO_X$-module with a universal logarithmic derivation, that can be explicitly constructed as the quotient
\[
\Omega_X=(\Omega^1_{\underline{X}/\kk}\oplus(\cO_X\otimes_{\bZ} M^\gp))/\cK
\]
where $\Omega^1_{\underline{X}/\kk}$ is the usual sheaf of K\"{a}hler differentials on the scheme $\underline{X}$, and $\cK$ is the submodule generated by sections of the form $(d\alpha(m),0)-(0,\alpha(m)\otimes m)$ for $m\in M$. If $X$ admits local charts (which will be always true for us), this is a coherent $\cO_X$-module, called the \emph{logarithmic cotangent sheaf}. The dual of $\Omega_X$ is the logarithmic tangent sheaf $\cT_X$. See  \cite[Theorem~\RomanNum{IV}.1.2.4]{Ogus:logbook} for details about the construction.

If $X$ is a smooth variety and $D\subseteq X$ is simple normal crossings, $\Omega_{(X,D)}$ coincides with the usual sheaf of logarithmic differential forms on $(X,D)$, i.e. the sheaf of meromorphic differential forms on $X$ with poles of order at most one along components of $D$. This is locally generated by differential forms of the form $dg$ for $g\in \cO_X$  and $d\log f=df/f$ where $f$ is a local equation of $D$.

In the idealized case the sheaf $\Omega_X$ is still defined using the formula above (i.e. by disregarding the idealized structure). In this case though, its dual $\cT_X$ is not a subsheaf of the usual tangent sheaf, because it has sections coming from sections of the sheaf $M$ that map to zero in $\cO_X$. For example, if $X$ is the standard log point, the usual tangent sheaf has rank $0$, while the sheaf of logarithmic derivations $\cT_X$ has rank $1$, and it is generated by the restriction of the derivation $x\frac{\partial}{\partial x}$ from $\bA^1$.

In both cases, the sheaf $\cT_X$ has a Lie bracket, that we will denote by $[-,-]\colon \cT_X\times \cT_X\to \cT_X$ as usual. See \cite[Section~\RomanNum{V}.2.1]{Ogus:logbook} for details. Moreover, as in the classical case, a morphism of (idealized)  log schemes $f\colon X\to Y$ induces a canonical morphism of sheaves $f^*\Omega_Y\to \Omega_X$ (cf.~\cite[Proposition~\RomanNum{IV}.1.2.15]{Ogus:logbook}).

\begin{Ex}\label{example:differentials.toric}
If $X=\TV{P}$ is an affine toric variety, then one can check that $\Omega_X$ is isomorphic to the locally free sheaf $\cO_X\otimes_{\bZ} P^\gp$ (see \cite[Proposition~\RomanNum{IV}.1.1.4]{Ogus:logbook}).

Moreover, for every ideal $K\subseteq P$, for the idealized log scheme $Y=\ITV{P}{K}$ we also have $\Omega_Y=\cO_Y\otimes_{\bZ}P^\gp$. Note that this is still locally free, but if $K$ is non-trivial it has bigger rank than the dimension of $Y$. Moreover, we have $\Omega_Y\cong j^*\Omega_X$ where $j\colon Y\to X$ is the closed embedding.
\end{Ex}

The behaviour of the previous example generalizes in particular to restriction to strata of smooth log schemes. In order to prove this, we reduce to the case of affine toric varieties via the following proposition.

\begin{Prop}[{\cite[Corollary~\RomanNum{IV}.3.2.4]{Ogus:logbook}}]
Let $f\colon X\to Y$ be a strict and classically \'etale morphism of (idealized) log schemes. Then the canonical morphism $f^*\Omega_Y\to \Omega_X$ is an isomorphism.\qed
\end{Prop}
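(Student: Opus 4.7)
The plan is to derive the isomorphism from the general conormal sequence for morphisms of (idealized) log schemes. For any morphism $f\colon X \to Y$ there is a natural right-exact sequence of $\cO_X$-modules
\[
    f^*\Omega_Y \xrightarrow{\phi} \Omega_X \to \Omega_{X/Y} \to 0,
\]
where $\Omega_{X/Y}$ is the sheaf of relative logarithmic differentials. I would first argue surjectivity of $\phi$: since $f$ is strict, $f^{-1}\overline{M}_Y^{\gp} \xrightarrow{\cong} \overline{M}_X^{\gp}$, so there is no logarithmic contribution to $\Omega_{X/Y}$ and it reduces to the classical relative cotangent sheaf $\Omega^1_{\underline{X}/\underline{Y}}$. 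Classical étaleness of $\underline{f}$ makes the latter vanish, and hence $\phi$ is surjective.

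For the injectivity of $\phi$ I would work locally via charts. Around any point of $X$, choose a chart $P \to M_Y$ for $Y$ (together with an ideal $K \subseteq P$ in the idealized case), giving a strict morphism $g\colon Y \to \ITV{P}{K}$; the composition $h = g \circ f$ is again strict, so it supplies a chart for $X$. By Example~\ref{example:differentials.toric}, $\Omega_{\ITV{P}{K}} \cong \cO_{\ITV{P}{K}} \otimes_{\bZ} P^{\gp}$ is locally free and its formation commutes with arbitrary base change. Using the explicit presentation
\[
    \Omega_{(\cdot)} = \bigl(\Omega^1_{\underline{(\cdot)}/\kk} \oplus (\cO_{(\cdot)} \otimes_{\bZ} M^{\gp})\bigr)/\cK,
\]
I would then compare $\Omega_X$ and $f^*\Omega_Y$ term by term: classical étaleness identifies $\underline{f}^*\Omega^1_{\underline{Y}/\kk}$ with $\Omega^1_{\underline{X}/\kk}$, while strictness identifies the monoid contributions after applying $f^*$ (up to the unit parts, which are absorbed into the quotient).

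The step I expect to require the most care is checking that the defining relations $\cK$ on the two sides match up under these identifications, since $\cK$ mixes the exterior derivative $d$ with the structure homomorphism $\alpha$, and the tensor product with $\cO_X$ partially absorbs the unit part of $M$ through the $d\alpha(m) = \alpha(m) \otimes m$ relation. Once this compatibility is verified, pulling back the presentation for $\Omega_Y$ yields exactly the presentation for $\Omega_X$, so $\phi$ is an isomorphism. The idealized variant introduces no genuine new difficulty, since the idealized structure does not enter the definition of $\Omega$.
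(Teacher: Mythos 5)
Your argument is correct, but note that the paper itself gives no proof of this statement: it simply cites Ogus's book (Corollary IV.3.2.4), where the result is deduced from the general theory of log smooth and log \'etale morphisms --- a strict, classically \'etale morphism is log \'etale, and for such morphisms the fundamental exact sequence $f^*\Omega_Y \to \Omega_X \to \Omega_{X/Y} \to 0$ extends to an exact, locally split sequence with $\Omega_{X/Y}=0$, forcing the first map to be an isomorphism. Your route is genuinely more elementary: the conormal sequence gives surjectivity (strictness kills the monoid contribution to $\Omega_{X/Y}$, reducing it to $\Omega^1_{\underline{X}/\underline{Y}}$, which vanishes by \'etaleness --- correct), and injectivity is then a chart computation. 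The chart step is the right move and works: composing $f$ with a chart $g\colon Y\to \ITV{P}{K}$ produces a chart for $X$ with the \emph{same} monoid $P$, and with a common chart both sheaves admit presentations $\bigl(\Omega^1_{\underline{X}/\kk}\oplus(\cO_X\otimes_{\bZ}P^{\gp})\bigr)/\cK'$ (resp.\ the same with $Y$), where $\cK'$ is generated by $(d\alpha(p),0)-(0,\alpha(p)\otimes p)$ for $p\in P$; since $\alpha_X(p)=f^{\#}\alpha_Y(p)$ and $f^*\Omega^1_{\underline{Y}/\kk}\cong\Omega^1_{\underline{X}/\kk}$, pulling back the presentation (pullback being right exact) matches generators and relations term by term, and the unit-absorption issue you flag evaporates once you use $P^{\gp}$ instead of $M^{\gp}$. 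One caveat: your side remark that the formation of $\Omega_{\ITV{P}{K}}$ ``commutes with arbitrary base change'' is false as stated --- for instance $\Omega$ does not pull back along strict closed immersions, nor along the strict chart map from $\as 2$ with log structure along one axis to $\TV{\NN}$; what is true, and what your argument actually uses, is that a strict morphism lets you transport a chart, after which the explicit presentations do the work. In summary: Ogus's route buys generality (the statement for all log \'etale morphisms, not just strict classically \'etale ones), while yours buys a self-contained argument at the level of this paper's preliminaries.
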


\begin{Prop}\label{prop:differential restriction}
Let $X$ be a smooth idealized log scheme, and $j\colon X^k\hookrightarrow X$ the log stratum of codimension $k$. Then the canonical morphism of $\cO_{X^k}$-modules $j^*\Omega_X\to \Omega_{X^k}$ is an isomorphism.
\end{Prop}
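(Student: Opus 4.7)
The plan is to reduce to the toric case treated in Example~\ref{example:differentials.toric} via the étale local charts guaranteed by smoothness of $X$.

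First, since $j^*\Omega_X$ and $\Omega_{X^k}$ are both coherent $\cO_{X^k}$-modules, it suffices to check that the canonical morphism between them is an isomorphism étale locally on $X^k$, hence on $X$. By the definition of smoothness for idealized log schemes, around any point of $X$ there is an étale neighborhood $U$ equipped with a strict étale morphism $\phi\colon U\to \ITV{P}{K}$ for some fine saturated monoid $P$ and ideal $K\subseteq P$. The log strata are defined intrinsically via the rank of $\overline{M}^{\gp}$, and strict morphisms induce isomorphisms on characteristic sheaves, so $\phi^{-1}\bigl((\ITV{P}{K})^k\bigr)$ coincides with $U\cap X^k$. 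Combined with the preceding proposition, which says that strict étale morphisms pull back $\Omega$ to $\Omega$, and the naturality of the canonical map $j^*\Omega\to \Omega$, this reduces the problem to the case $X=\ITV{P}{K}$.

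For $X=\ITV{P}{K}$, the log stratum $X^k$ is a union of closures of torus orbits of $\TV{P}$ contained in the support of $X$. Working in a small enough neighborhood of the chosen point, we may assume that only one such component passes through it, and then identify $X^k$ with a closed subscheme of the form $\ITV{P}{K'}$ for an ideal $K'\supseteq K$ determined by the face structure of $P$. Under this identification, Example~\ref{example:differentials.toric} gives canonical isomorphisms $\Omega_X\cong \cO_X\otimes_{\bZ} P^{\gp}$ and $\Omega_{X^k}\cong \cO_{X^k}\otimes_{\bZ} P^{\gp}$, and the canonical map $j^*\Omega_X\to \Omega_{X^k}$ is identified with the trivial isomorphism $\cO_{X^k}\otimes_{\cO_X}(\cO_X\otimes_{\bZ} P^{\gp})\to \cO_{X^k}\otimes_{\bZ} P^{\gp}$, completing the proof.

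The main obstacle I anticipate is the bookkeeping in the reduction step, namely verifying that log strata behave well under strict étale morphisms and that each irreducible component of a log stratum of $\ITV{P}{K}$ is indeed of the form $\ITV{P}{K'}$. Both points ultimately rest on the intrinsic characterization of the stratification through the rank of $\overline{M}^{\gp}$, together with the observation that on the toric local model this rank is determined solely by which face of $P$ the point lies in.
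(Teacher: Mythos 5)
Your overall route is the same as the paper's: use the preceding proposition on strict \'etale morphisms to reduce to the toric local model, then conclude from the explicit description $\Omega = \cO\otimes_{\bZ}P^{\gp}$. The paper's proof is exactly this reduction, followed by the observation that \emph{every log stratum of the local model is defined by an ideal of the monoid $P$} and a citation of \cite[Corollary~\RomanNum{IV}.2.3.3]{Ogus:logbook}; you instead re-derive the needed comparison from Example~\ref{example:differentials.toric}, which is fine in principle.

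However, one step of your argument fails as written: \enquote{working in a small enough neighborhood of the chosen point, we may assume that only one such component passes through it.} The strata $X^k$ are typically reducible \emph{at} the points of interest, and no \'etale localization separates components meeting at a common point. For instance, take $X = \as 2$ with its toric log structure ($P = \bN^2$) and $k=1$: then $X^1$ is the union of the two coordinate axes, both components pass through the origin, and any \'etale neighborhood of the origin still contains both. Fortunately the step is also unnecessary, because the \emph{entire} stratum $X^k$, with its reduced structure, is already of the form $\ITV{P}{K'}$ for a single monoid ideal $K' \supseteq K$. Indeed, $X^k$ is a union of orbit closures $\Spec \kk[F]$ over the faces $F$ of $P$ with $\operatorname{rank}P^{\gp} - \operatorname{rank}F^{\gp} \geq k$ (shifted by the generic rank in the idealized case), each cut out by the monomial prime ideal $\langle P\setminus F\rangle$; since membership in such an ideal can be tested monomial by monomial, the (radical) ideal of the union is again generated by the monomials $t^m$ with $m \in K' = \bigcap_F (P\setminus F)$, which is an ideal of the monoid $P$. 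This is precisely the observation the paper's proof relies on. With this correction, your identification of the canonical map with $\cO_{X^k}\otimes_{\cO_X}\bigl(\cO_X\otimes_{\bZ}P^{\gp}\bigr) \to \cO_{X^k}\otimes_{\bZ}P^{\gp}$ (compatibly via $1\otimes m \mapsto d\log t^m$) goes through and completes the proof.
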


\begin{proof}
The preceding proposition reduces the statement to the case of an affine toric variety $\TV{P}$, which follows from \cite[Corollary~\RomanNum{IV}.2.3.3]{Ogus:logbook}, after noticing that every log stratum is defined by an ideal of the monoid $P$. 
\end{proof}

Note that the isomorphism of the last proposition is a ``logarithmic'' phenomenon, that has no analogue in the classical setting. Moreover, as a consequence also the canonical morphism between logarithmic tangent sheaves $\cT_{X^k}\to j^*\cT_X$ is an isomorphism.

\begin{Prop}
Let $X$ be a smooth idealized log scheme. Then the log cotangent sheaf and the log tangent sheaf are locally free. Their rank coincides with the log dimension of $X$.
\end{Prop}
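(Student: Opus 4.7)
The overall strategy is to reduce via \'etale charts to the standard local models $\ITV{P}{K}$ and then check the statement directly on those. Both the local freeness of a coherent sheaf and the value of its rank can be verified \'etale locally, and the preceding proposition identifies $f^{*}\Omega_{Y}$ with $\Omega_{X}$ whenever $f\colon X\to Y$ is strict and classically \'etale. Hence I may work in a sufficiently small \'etale neighborhood of an arbitrary point of $X$ and assume without loss of generality that $X=\ITV{P}{K}$ for some fine saturated monoid $P$ and ideal $K\subseteq P$. Example~\ref{example:differentials.toric} then gives $\Omega_{X}\cong\cO_{X}\otimes_{\bZ}P^{\gp}$ directly, so $\Omega_{X}$ is free of rank equal to the rank of the finitely generated free abelian group $P^{\gp}$; dualizing over $\cO_{X}$ yields the same statement for $\cT_{X}$.

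The remaining task is to identify $\mathrm{rank}\,P^{\gp}$ with $\logdim X$, which I would carry out using the orbit decomposition of the ambient affine toric variety $\TV{P}$. Its torus orbits are indexed by the faces $F\subseteq P$, with $\dim O_{F}=\mathrm{rank}\,F^{\gp}$, and along $O_{F}$ the stalk of $\overline{M}^{\gp}$ has rank $\mathrm{rank}\,P^{\gp}-\mathrm{rank}\,F^{\gp}$. The idealized subscheme $\ITV{P}{K}$ consists precisely of the orbits $O_{F}$ with $F\cap K=\emptyset$. After restricting to a connected component if necessary, a maximal face $F_{0}$ with $F_{0}\cap K=\emptyset$ indexes the dense stratum, so $\dim X=\mathrm{rank}\,F_{0}^{\gp}$ while the generic rank is $r=\mathrm{rank}\,P^{\gp}-\mathrm{rank}\,F_{0}^{\gp}$. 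These add up to $\logdim X=\dim X+r=\mathrm{rank}\,P^{\gp}$, as required.

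The only mildly delicate point is the combinatorial bookkeeping in the second step: for fixed $P$ and $K$ the model $\ITV{P}{K}$ can have several irreducible components of different dimensions, corresponding to different maximal faces disjoint from $K$, so one must genuinely cut down to a connected component before speaking of a single well-defined generic rank. Beyond that, every ingredient is a direct invocation of material already recalled in Section~\ref{sec:prelims}.
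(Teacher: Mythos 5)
Your proof is correct and takes essentially the same route as the paper's: the paper likewise reduces, via strict classically \'etale charts and the preceding pullback proposition, to the local models treated in Example~\ref{example:differentials.toric}, where $\Omega_X\cong\cO_X\otimes_{\bZ}P^{\gp}$ is free of rank $\mathrm{rank}\,P^{\gp}$. The only difference is that you spell out the orbit--face combinatorics identifying $\mathrm{rank}\,P^{\gp}$ with $\logdim X$ (including the caveat about components of $\ITV{P}{K}$), which the paper's one-line proof leaves implicit; your bookkeeping there is accurate.
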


\begin{proof}
This follows directly from the case of (idealized) toric varieties mentioned in Example \ref{example:differentials.toric}.
\end{proof}

The geometric vector bundles associated with $\cT_X$ and $\Omega_X$ will be denoted by $TX$ and $T^*X$ respectively. We will have no use for the ``ordinary'' tangent and cotangent sheaves (or bundles) of $X$, so there will be no risk of confusion. We will usually denote by $\pi\colon T^*X\to X$ the projection from the log cotangent bundle, and we will always tacitly equip $T^*X$ with the log structure obtained from $X$ by pulling back along $\pi$.

As in the classical case, the wedge powers $\Omega_X^i=\bigwedge^i \Omega_X$ of $\Omega_X$ play an important role. In particular, the top wedge power $\omega_X=\Omega_X^{\logdim X}$ is the \emph{logarithmic canonical bundle} of $X$. Another consequence of Proposition \ref{prop:differential restriction} is that the natural map $j^*\Omega_X^i\to \Omega_{X^k}^i$ for the embedding $j\colon X^k\to X$ of a log stratum is an isomorphism for every $i$ and $k$. In particular the natural map $j^*\omega_X\to \omega_{X^k}$ is an isomorphism (note that $\logdim X^k=\logdim X$ for every $k$).

\subsection{Differential algebras}\label{sec:almost_commutative}

In this section we gather some results about modules over arbitrary sheaves of differential $\O_X$-algebras.
Thus we will temporarily forget the log structure, and until the end of this section $X$ will be a separated noetherian  scheme of finite type over a fixed field $\kk$.
We fix a quasi-coherent sheaf $\sheaf D$ of differential $\O_X$-algebras.
Thus $\sheaf D$ is endowed with an increasing filtration $F$ of $\O_X$-sub-bimodules indexed by $\NN$, compatible with the multiplication, such that the associated graded $\gr \sheaf D$ is a commutative finite type $\O_X$-algebra.
In particular, $\sheaf D$ is a differential quasi-coherent $\O_X$-ring of finite type in the sense of \cite[Definition~5.2]{YekutieliZhang:2006:DualizingComplexesPerverseSheavesOnNoncommutativeRingedSchemes}.
We set $Y = \Spec_{\O_X}\gr\sheaf D$ and let $\pi \colon Y \to  X$ be the projection map.

For a finitely generated $\sheaf D$-module $\sheaf F$ we can as usual (locally) pick a good filtration on $\sheaf F$ and consider its associated graded as an $\O_Y$-module.
The (topological) support of $\gr \sheaf F$ does not depend on the choice of good filtration and will be denoted by $\Ch(\sheaf F)$.
For $\sheaf F \in  \catDbCoh{\sheaf D}$ we set $\Ch(\sheaf F) = \bigcup_{\ell}\Ch(H^\ell(\sheaf F))$.

As usual, given a good filtration of $\sheaf F$, a short exact sequence 
\[
    0 \to \sheaf G \to \sheaf F \to \sheaf H \to 0
\]
of coherent  $\sheaf D$-modules induces good filtrations on $\sheaf G$ and $\sheaf H$ and a short exact sequence of the associated graded modules.
Thus,
\[
    \Ch(\sheaf F) = \Ch(\sheaf G) \cup \Ch(\sheaf H).
\]
In particular, if $\sheaf F'$ is a subquotient of $\sheaf F$, then $\Ch(\sheaf F') \subseteq \Ch(\sheaf F)$.

In this situation Yekutieli and Zhang, based on previous work by van den Bergh \cite{VanDenBergh:1997:ExistenceTheoremsForDualizingComplexes}, introduced the notion of a \emph{rigid dualizing complex} \cite[Definition~3.7]{YekutieliZhang:2006:DualizingComplexesPerverseSheavesOnNoncommutativeRingedSchemes}.
The existence of a rigid dualizing complex is guaranteed by \cite[Theorem~0.3]{YekutieliZhang:2006:DualizingComplexesPerverseSheavesOnNoncommutativeRingedSchemes}.
We fix a choice of such and denote it by $\DC$.
Let us write 
\[
    \DD = \sheafHom_{\sheaf D}({-},\, \DC)\colon \catDbCoh{\sheaf D} \to  \catDbCoh{\sheaf D^{\mathrm{op}}}
\]
for the corresponding dualizing functor, which by definition is an equivalence.
The following proposition is well known under the additional assumption that $Y$ is smooth (cf.~\cite[Section~2]{Bjoerk:1979:RingsOfDifferentialOperators}).

\begin{Prop}\label{prop:nc_dual_vanishing_and_support}
    Let $\sheaf F$ be a coherent $\sheaf D$-module.
    Then,
    \begin{enumerate}
        \item\label{prop:nc_dual_vanishing_and_support:i} $\DD\sheaf F \in  \catDCoh[\ge n]{\sheaf D^{\mathrm{op}}}$ if and only if $n \le - \dim \Ch(\sheaf F)$.
        \item\label{prop:nc_dual_vanishing_and_support:ii} $\dim \Ch\bigl(H^\ell(\DD \sheaf F)\bigr) \le - \ell$ for all $\ell \in  \mathbb{Z}$.
    \end{enumerate}
\end{Prop}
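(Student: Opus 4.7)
The plan is to reduce both statements to standard coherent duality on the commutative scheme $Y = \Spec_{\O_X} \gr \sheaf D$ by passing to the associated graded. Locally on $X$ I would choose a good filtration on $\sheaf F$, making $\gr \sheaf F$ a coherent $\O_Y$-module whose topological support equals $\Ch(\sheaf F)$ by definition.

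The crucial input is a filtered comparison $\gr \DC \simeq \omega_Y^\bullet$: the rigid dualizing complex admits a bi-filtration whose associated graded is quasi-isomorphic to the Grothendieck dualizing complex of $Y$. For smooth $Y$ this is classical (cf.~\cite[Section~2]{Bjoerk:1979:RingsOfDifferentialOperators}), and for general $Y$ it should fall out of the local construction of rigid dualizing complexes in \cite{YekutieliZhang:2006:DualizingComplexesPerverseSheavesOnNoncommutativeRingedSchemes}. Combined with the good filtration on $\sheaf F$, it yields a convergent spectral sequence whose $E_2$-page is essentially $\sheafExt^\bullet_{\O_Y}(\gr\sheaf F, \omega_Y^\bullet)$ and which abuts to $H^\bullet(\DD \sheaf F)$. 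In particular, for every $\ell$ one obtains a subquotient inclusion placing $\Ch\bigl(H^\ell(\DD\sheaf F)\bigr)$ inside $\supp \sheafExt^\ell_{\O_Y}(\gr\sheaf F, \omega_Y^\bullet)$, and $H^\ell(\DD\sheaf F) = 0$ whenever all contributing $\sheafExt$-terms vanish.

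With this reduction in hand, both parts follow from two purely commutative facts about a coherent sheaf $\sheaf G$ on the finite-type $\kk$-scheme $Y$: (a) $\sheafExt^\ell_{\O_Y}(\sheaf G, \omega_Y^\bullet) = 0$ for $\ell < -\dim\supp\sheaf G$, with non-vanishing at $\ell = -\dim\supp\sheaf G$; and (b) $\dim\supp\sheafExt^\ell_{\O_Y}(\sheaf G, \omega_Y^\bullet) \le -\ell$ for every $\ell \in \ZZ$. Both are standard consequences of Grothendieck--Serre duality and local duality. Applied with $\sheaf G = \gr\sheaf F$, fact (b) combined with the spectral sequence immediately yields (ii). Fact (a) yields the vanishing half of (i). For the non-vanishing half, at the bottom degree $\ell_0 = -\dim\Ch(\sheaf F)$ there are no incoming differentials in the spectral sequence (by the vanishing just proved), and every outgoing differential lands in an $\sheafExt$ whose support has strictly smaller dimension by (b). Hence the outgoing differentials are generically zero on the top-dimensional part of $\Ch(\sheaf F)$, so the corresponding $E_\infty$-term is non-zero, and therefore so is $H^{\ell_0}(\DD\sheaf F)$.

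The main obstacle is the filtered comparison $\gr \DC \simeq \omega_Y^\bullet$ in a form strong enough to produce the required spectral sequence. Because $\DC$ is a complex of $\sheaf D$-bimodules, the filtration must be compatible with both actions, and one needs a filtered quasi-isomorphism rather than merely a quasi-isomorphism on $H^\bullet$ of the associated graded. The smooth case is classical \cite{Bjoerk:1979:RingsOfDifferentialOperators}; in our generality the pieces must be assembled carefully from the construction in \cite{YekutieliZhang:2006:DualizingComplexesPerverseSheavesOnNoncommutativeRingedSchemes}, and one must additionally verify the strong convergence of the spectral sequence, which should follow from boundedness of the filtrations involved.
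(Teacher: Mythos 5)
Your strategy---pass to the associated graded, compare $\gr\DC$ with the Grothendieck dualizing complex $\omega_Y^\bullet$ of $Y=\Spec_{\O_X}\gr\sheaf D$, run the resulting spectral sequence, and finish with commutative local duality (vanishing of $\sheafExt^\ell_{\O_Y}(\sheaf G,\omega_Y^\bullet)$ below $-\dim\supp\sheaf G$, non-vanishing at the bottom degree, and the Auslander bound $\dim\supp\sheafExt^\ell \le -\ell$)---is the classical Bj\"ork argument, and your endgame is sound: in particular the survival of the bottom corner because all outgoing differentials land in terms of strictly smaller support dimension is exactly right. However, the step you yourself flag as the ``main obstacle'' is a genuine gap rather than a verification to be assembled: for singular $Y$ there is no off-the-shelf filtered quasi-isomorphism $\gr\DC\simeq\omega_Y^\bullet$. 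The rigid dualizing complex is only specified up to isomorphism in a derived category of bimodules, so one must \emph{construct} a representative carrying a bounded filtration compatible with both module structures whose associated graded is the (rigid) dualizing complex of the commutative ring $\gr\sheaf D$; one must further check that the induced filtrations on the modules $H^\ell(\DD\sheaf F)$ are \emph{good} (otherwise the supports appearing on the $E_\infty$-page say nothing about $\Ch(H^\ell(\DD\sheaf F))$), and establish convergence. Doing this in the required generality amounts to redeveloping the filtered/Rees-ring machinery of Yekutieli--Zhang \cite{YekutieliZhang:2005:DualizingComplexesPerverseModulesOverDifferentialAlgebras}, i.e.\ it is of essentially the same difficulty as the results your plan is implicitly trying to avoid. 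Only when $Y$ is smooth is your comparison classical \cite{Bjoerk:1979:RingsOfDifferentialOperators}, and the paper explicitly notes that this smooth case is the known one.

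For comparison, the paper's proof sidesteps the filtered comparison entirely: both statements are local, so it reduces to rings and cites \cite[Theorem~8.1]{YekutieliZhang:2005:DualizingComplexesPerverseModulesOverDifferentialAlgebras}, which says that $\DD\sheaf F$ lies in degrees $\ge n$ exactly when $n$ is at most the negative of the Gelfand--Kirillov dimension of $\sheaf F$; it then converts $\mathrm{GKdim}$ into $\dim\Ch(\sheaf F)$ via Connell--Stafford \cite{ConnellStafford:1989:GKDimAndAssociatedGraded} ($\mathrm{GKdim}$ of $\sheaf F$ equals that of $\gr\sheaf F$ for a good filtration) and Krause--Lenagan \cite{KrauseLenagan:2000:GrowthOfAlgebrasAndGKDimension} ($\mathrm{GKdim}$ equals Krull dimension for the commutative ring $\gr\sheaf D$), while part (ii) is a restatement of the Auslander condition of \cite[Theorem~0.3]{YekutieliZhang:2005:DualizingComplexesPerverseModulesOverDifferentialAlgebras} under the same dictionary. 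So the content you would need to build by hand is precisely what is outsourced to those citations; to make your route rigorous you should either invoke those theorems directly (at which point your spectral sequence becomes unnecessary) or restrict your construction to the smooth-$Y$ case where Bj\"ork's filtered resolution argument is complete.
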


\begin{proof}
    Both statements are local, so that it suffices to prove the corresponding statements for rings.
    By \cite[Theorem~8.1]{YekutieliZhang:2005:DualizingComplexesPerverseModulesOverDifferentialAlgebras}, $\DD\sheaf F \in  \catDCoh[\ge n]{\sheaf D^{\mathrm{op}}}$ if and only if $n$ is less or equal than the negative of the Gelfand--Kirillov dimension of $\sheaf F$.
    By \cite[Corollary~1.4]{ConnellStafford:1989:GKDimAndAssociatedGraded} this is the same as the Gelfand--Kirillov dimension of $\gr \sheaf F$ for some good filtration of $\sheaf F$.
    That in turn is the same as the Krull dimension of $\gr \sheaf F$ \cite[Proposition~7.9]{KrauseLenagan:2000:GrowthOfAlgebrasAndGKDimension}.
    Statement \ref{prop:nc_dual_vanishing_and_support:i} follows.

    Similarly, \ref{prop:nc_dual_vanishing_and_support:ii} is a restatement of \cite[Theorem~0.3]{YekutieliZhang:2005:DualizingComplexesPerverseModulesOverDifferentialAlgebras} under the same equivalences.
\end{proof}

\begin{Cor}\label{cor:nc_dual_vanishing_and_support_derived}
    Let $\sheaf F \in  \catDbCoh{\sheaf D}$.
    Then $\DD\sheaf F \in  \catDCoh[\ge n]{\sheaf D^{\mathrm{op}}}$ if and only if $$\dim \Ch(H^\ell(\sheaf F)) \le - n - \ell$$ for all $\ell \in  \mathbb{Z}$.
\end{Cor}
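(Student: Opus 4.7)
The plan is to deduce the corollary from Proposition~\ref{prop:nc_dual_vanishing_and_support} via the hyperext spectral sequence
\[
    E_2^{p,q} = H^p\bigl(\DD H^{-q}(\sheaf F)\bigr) \;\Longrightarrow\; H^{p+q}(\DD \sheaf F)
\]
coming from the standard truncation filtration on $\sheaf F$ and the contravariance of $\DD$. Since $\sheaf F$ is bounded with coherent cohomology, this spectral sequence is bounded and hence convergent.

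For the ($\Leftarrow$) direction, I assume $\dim \Ch(H^\ell(\sheaf F)) \le -n - \ell$ for all $\ell$. Applying part~\ref{prop:nc_dual_vanishing_and_support:i} of the proposition to each coherent module $H^{-q}(\sheaf F)$ yields $E_2^{p,q} = 0$ whenever $p < -\dim \Ch(H^{-q}(\sheaf F))$, which under the hypothesis forces vanishing whenever $p + q < n$. Convergence then gives $H^{p+q}(\DD\sheaf F) = 0$ in the same range, so $\DD \sheaf F \in \catDCoh[\ge n]{\sheaf D^{\mathrm{op}}}$.

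For the ($\Rightarrow$) direction, I use biduality for the rigid dualizing complex to write $\sheaf F = \DD \sheaf G$ with $\sheaf G := \DD \sheaf F \in \catDbCoh{\sheaf D^{\mathrm{op}}} \cap \catDCoh[\ge n]{\sheaf D^{\mathrm{op}}}$ and run the analogous spectral sequence computing the cohomology of $\DD \sheaf G = \sheaf F$. Part~\ref{prop:nc_dual_vanishing_and_support:ii} now bounds $\dim \Ch(E_2^{p,q}) \le -p$, while the hypothesis on $\sheaf G$ makes $E_2^{p,q} = 0$ unless $q \le -n$. On the total-degree diagonal $p + q = \ell$ only terms with $p \ge \ell + n$ survive, each of characteristic variety of dimension at most $-\ell - n$. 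Since $H^\ell(\sheaf F)$ carries a finite filtration whose associated graded pieces are subquotients of the corresponding $E_\infty^{p,q}$, and characteristic varieties respect subquotients and are additive in short exact sequences (as noted before the proposition), the desired bound $\dim \Ch(H^\ell(\sheaf F)) \le -n - \ell$ follows.

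The main point requiring care is simply setting up the spectral sequence with the correct signs given the contravariance of $\DD$, and verifying its boundedness; once that is in place, each direction reduces to a single application of the relevant part of Proposition~\ref{prop:nc_dual_vanishing_and_support} together with the elementary behaviour of characteristic varieties under finite filtrations. I do not anticipate any serious technical obstacle beyond this bookkeeping.
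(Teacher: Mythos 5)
Your proposal is correct and is in substance the same argument as the paper's: the paper proves both directions by induction on cohomological amplitude using the truncation triangles $H^p(\sheaf F)[-p] \to \sheaf F \to \taustand_{>p}\sheaf F$ (invoking biduality to swap $\sheaf F$ and $\DD\sheaf F$ for the converse, just as you do), which is precisely the dévissage that your hyperext spectral sequence packages, with the same key inputs — Proposition~\ref{prop:nc_dual_vanishing_and_support} applied to the individual cohomology modules and the stability of $\Ch$ under subquotients and extensions. The spectral-sequence formulation is a presentational repackaging of the paper's inductive triangle argument rather than a genuinely different route, and both your directions check out as stated.
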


\begin{proof}
    Let us first assume that $\dim \Ch(H^\ell(\sheaf F)) \le - n - \ell$ for all $\ell \in  \mathbb{Z}$.
    We induct on the cohomological amplitude of $\sheaf F$.
    If $\sheaf F$ is just a shift of a coherent $\sheaf D$-module, then the assertion is just a shift of Proposition~\ref{prop:nc_dual_vanishing_and_support}\ref{prop:nc_dual_vanishing_and_support:i}.
    Otherwise let $p$ be the smallest integer such that $H^p(\sheaf F) \ne 0$.
    Consider the distinguished triangle
    \[
        H^p(\sheaf F)[-p] \to  \sheaf F \to  \taustand_{>p}\sheaf F
    \]
    and dualize to obtain the triangle
    \[
        \DD(\taustand_{>p} \sheaf F) \to 
        \DD\sheaf F \to 
        \DD(H^p(\sheaf F)[-p]).
    \]
    By induction, both of the outer terms lie in cohomological degrees at least $n$.
    Thus so does the middle term.

    For the converse implication let us replace $\sheaf F$ by $\DD\sheaf F$ and $\sheaf D$ by $\sheaf D^{\mathrm{op}}$, so that we assume that $\sheaf F \in \catDCoh[\ge n]{\lD}$ and we have to show that $\dim \Ch(H^\ell(\DD\sheaf F)) \le - n - \ell$ for all $\ell \in  \mathbb{Z}$.
    Again we induct on the cohomological amplitude of $\sheaf F$.
    If $\sheaf F$ is just a shift of a coherent $\sheaf D$-module, then the assertion is just a shift of Proposition~\ref{prop:nc_dual_vanishing_and_support}\ref{prop:nc_dual_vanishing_and_support:ii}.
    Otherwise let $p$ be the smallest integer such that $H^p(\sheaf F) \ne 0$ and consider the distinguished triangle
    \[
        H^p(\sheaf F)[-p] \to  \sheaf F \to  \taustand_{>p}\sheaf F.
    \]
    Applying duality, we get the triangle
    \[
       \DD(\taustand_{>p}\sheaf F)\to   \DD\sheaf F \to  \DD(H^p(\sheaf F)[-p]).
    \]
    The corresponding long exact sequence on cohomology sheaves shows that the cohomology sheaves of $\DD\sheaf F$ fit into short exact sequences with subquotients of the cohomologies of the outer complexes.
    Hence
    \[
        \Ch(H^\ell(\DD\sheaf F)) \subseteq 
        \Ch(H^\ell(\DD(H^p(\sheaf F)[-p]))) \cup 
        \Ch(H^\ell(\DD(\taustand_{>p}\sheaf F))),
    \]
    and the statement follows by induction.
\end{proof}

\begin{Cor}\label{cor:nc_dual_vanishing_and_support_unbounded}
    Let $\sheaf F \in  \catDPlusCoh{\sheaf D}$.
    Then $\sheaf F \in  \catDCoh[\ge n]{\sheaf D}$ if and only if
    \[
        \dim \Ch(H^\ell(\DD\sheaf F)) \le - n - \ell
    \]
    for all $\ell \in  \mathbb{Z}$.
\end{Cor}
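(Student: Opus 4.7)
My plan is to reduce to the bounded case of Corollary~\ref{cor:nc_dual_vanishing_and_support_derived} by truncating $\sheaf F$ at a sufficiently high cohomological degree. The essential preliminary is that $\DD$ has bounded cohomological amplitude: since $\gr\sheaf D$ is of finite type over the noetherian scheme $X$, the spectrum $Y = \Spec_{\O_X}\gr\sheaf D$ has finite Krull dimension $d$, and Proposition~\ref{prop:nc_dual_vanishing_and_support} then places the cohomology of $\DD\sheaf G$ in degrees $[-d, 0]$ for any coherent $\sheaf D$-module $\sheaf G$.

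Fix $m > n + d$ and set $\sheaf F_m = \taustand_{\le m}\sheaf F \in \catDbCoh{\sheaf D}$. Dualizing the truncation triangle $\sheaf F_m \to \sheaf F \to \taustand_{>m}\sheaf F$ produces
\[
    \DD(\taustand_{>m}\sheaf F) \to \DD\sheaf F \to \DD\sheaf F_m.
\]
My first task, and the main technical point, is to show that the leftmost term lies in $\catDCoh[\le -m-1]{\sheaf D^{\mathrm{op}}}$. For bounded complexes concentrated in degrees $\ge m+1$ this follows by induction on cohomological length from the amplitude bound on $\DD$, exactly as in the proof of Corollary~\ref{cor:nc_dual_vanishing_and_support_derived}. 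For the bounded-below complex $\taustand_{>m}\sheaf F$, I would write it as the homotopy colimit of its bounded upper truncations, which presents $\DD(\taustand_{>m}\sheaf F)$ as a homotopy limit; a Mittag--Leffler argument then shows that the inverse system stabilizes to zero in each cohomological degree $\ge -m$. The long exact sequence of the dualized triangle then yields $H^\ell(\DD\sheaf F) \cong H^\ell(\DD\sheaf F_m)$ for all $\ell \ge -m$.

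Both directions of the corollary now follow from Corollary~\ref{cor:nc_dual_vanishing_and_support_derived}, which, via $\DD\DD = \mathrm{id}$, states that for bounded $\sheaf H$ one has $\sheaf H \in \catDCoh[\ge n]{\sheaf D}$ if and only if $\dim\Ch H^\ell(\DD\sheaf H) \le -n-\ell$ for every $\ell$. For the forward direction, $\sheaf F \in \catDCoh[\ge n]{\sheaf D}$ forces $\sheaf F_m \in \catDCoh[\ge n]{\sheaf D}$, giving $\dim\Ch H^\ell(\DD\sheaf F_m) \le -n-\ell$ for every $\ell$; the isomorphism above transfers this to $\DD\sheaf F$ in every degree $\ge -m$, and since $m$ is arbitrary the bound extends to all $\ell \in \ZZ$. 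For the converse, the bound on $\DD\sheaf F$ transfers to $\DD\sheaf F_m$ in degrees $\ell \ge -m$, while for $\ell < -m$ the inequality $-n - \ell > m - n > d$ exceeds the maximum dimension of any characteristic variety, so the bound holds automatically. Corollary~\ref{cor:nc_dual_vanishing_and_support_derived} then yields $\sheaf F_m \in \catDCoh[\ge n]{\sheaf D}$, which combined with $H^p(\sheaf F) = H^p(\sheaf F_m)$ for $p \le m$ gives $\sheaf F \in \catDCoh[\ge n]{\sheaf D}$.
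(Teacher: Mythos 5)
Your proof is correct and takes essentially the same route as the paper, whose entire argument reads: the bounded case is the dual of Corollary~\ref{cor:nc_dual_vanishing_and_support_derived}, and for bounded-below $\sheaf F$ the statement ``follows by taking the limit over all truncations $\taustand_{<k}\sheaf F$'' --- your truncation at $m > n+d$, the amplitude bound on $\DD$, and the homotopy-limit/Mittag--Leffler step are precisely the details that one-line limit argument suppresses. The only delicate point, exactness of products and $\lim^1$ in categories of sheaves, is harmless here: the statement is local, so one may pass to rings as in the proof of Proposition~\ref{prop:nc_dual_vanishing_and_support} (where your eventually-constant system trivially satisfies ML), or avoid the homotopy limit altogether by using the finite injective dimension of the rigid dualizing complex to bound $\DD(\taustand_{>m}\sheaf F)$ crudely from above.
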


\begin{proof}
  For $\sheaf F \in \catDbCoh{\sheaf D}$, this is just the dual of Corollary~\ref{cor:nc_dual_vanishing_and_support_derived}.
  If $\sheaf F$ is only bounded below the statement follows by taking the limit over all truncations $\taustand_{<k}\sheaf F$.
\end{proof}

\begin{Lem}\label{lem:duality_keeps_dim_ch}
    Let $\sheaf F \in  \catDbCoh{\sheaf D}$. Then $\dim \Ch(\DD\sheaf F) = \dim \Ch(\sheaf F)$.
\end{Lem}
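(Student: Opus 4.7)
The plan is to establish only the inequality $\dim \Ch(\DD\sheaf F) \le \dim \Ch(\sheaf F)$; since $\DD$ is an equivalence $\catDbCoh{\sheaf D} \leftrightarrow \catDbCoh{\sheaf D^{\mathrm{op}}}$ with $\DD\DD \cong \mathrm{id}$, applying this bound to $\DD\sheaf F$ immediately produces the reverse inequality. I would then prove this inequality by induction on the cohomological amplitude of $\sheaf F$, exactly parallel to the inductive pattern used in Corollaries~\ref{cor:nc_dual_vanishing_and_support_derived} and~\ref{cor:nc_dual_vanishing_and_support_unbounded}.

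For the base case, assume $\sheaf F = \sheaf G[-p]$ for a single coherent $\sheaf D$-module $\sheaf G$, so $H^\ell(\DD\sheaf F) = H^{\ell+p}(\DD\sheaf G)$. Proposition~\ref{prop:nc_dual_vanishing_and_support}\ref{prop:nc_dual_vanishing_and_support:ii} gives $\dim \Ch(H^{\ell+p}(\DD\sheaf G)) \le -(\ell+p)$, while part~\ref{prop:nc_dual_vanishing_and_support:i} forces vanishing unless $\ell+p \ge -\dim\Ch(\sheaf G)$. Combining these, $\dim\Ch(H^\ell(\DD\sheaf F)) \le -(\ell+p) \le \dim\Ch(\sheaf G) = \dim\Ch(\sheaf F)$ for every $\ell$, which handles the base case.

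For the inductive step, choose $p$ minimal with $H^p(\sheaf F) \ne 0$ and dualize the truncation triangle $H^p(\sheaf F)[-p] \to \sheaf F \to \taustand_{>p}\sheaf F$ to obtain
\[
    \DD(\taustand_{>p}\sheaf F) \to \DD\sheaf F \to \DD(H^p(\sheaf F)[-p]).
\]
The long exact sequence on cohomology realises each $H^\ell(\DD\sheaf F)$ as an extension of a subobject of $H^\ell(\DD(H^p(\sheaf F)[-p]))$ by a quotient of $H^\ell(\DD(\taustand_{>p}\sheaf F))$; by the subquotient remark at the beginning of Section~\ref{sec:almost_commutative}, $\Ch(H^\ell(\DD\sheaf F))$ is contained in the union of the characteristic varieties of the corresponding cohomology sheaves of the two outer terms. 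Both outer terms have strictly smaller cohomological amplitude than $\sheaf F$ and their characteristic varieties are contained in $\Ch(\sheaf F)$, so the inductive hypothesis yields $\dim\Ch(\DD\sheaf F) \le \dim\Ch(\sheaf F)$.

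I do not anticipate a substantive obstacle: the whole argument is a routine induction that packages the two parts of Proposition~\ref{prop:nc_dual_vanishing_and_support} together, and the only mildly delicate point is the observation that $\DD\DD \cong \mathrm{id}$ lets us deduce the opposite inequality by symmetry rather than having to extract it from the duality estimates directly.
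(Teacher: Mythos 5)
Your proposal is correct and takes essentially the same route as the paper: the paper's proof likewise reduces by symmetry to the single inequality $\dim \Ch(\DD\sheaf F) \le \dim \Ch(\sheaf F)$, invokes ``a standard induction argument'' to reduce to a coherent $\sheaf D$-module placed in a single degree, and in that case combines parts~\ref{prop:nc_dual_vanishing_and_support:i} and~\ref{prop:nc_dual_vanishing_and_support:ii} of Proposition~\ref{prop:nc_dual_vanishing_and_support} exactly as in your base case (the vanishing range $-d \le \ell \le 0$ plus the bound $\dim\Ch(H^\ell(\DD\sheaf F)) \le -\ell$). The only difference is that you spell out the dévissage via truncation triangles that the paper leaves implicit, reusing the argument from Corollary~\ref{cor:nc_dual_vanishing_and_support_derived}.
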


\begin{proof}
    By symmetry we only have to show that $\dim \Ch(\DD\sheaf F) \leq  \dim \Ch(\sheaf F)$.
    By a standard induction argument it further suffices to show this assuming that $\sheaf F \in  \catCoh{\sheaf D}$ is a coherent $\sheaf D$-module.
    Set $d = \dim \Ch(\sheaf F)$.
    Then by Proposition~\ref{prop:nc_dual_vanishing_and_support} we immediately see that
    \[
        \dim \Ch(\DD\sheaf F) =
        \max_{\mathclap{-d \leq  \ell \leq  0}} \dim \Ch\bigl(H^\ell(\DD\sheaf F)\bigr) \leq  
        \max_{\mathclap{-d \leq  \ell \leq  0}}-\ell =  
        d.
        \qedhere
    \]
\end{proof}

\section{Logarithmic D-modules}\label{sec:log_D_modules}

We will start by introducing the basic formalism of logarithmic D-modules and exploring some of their properties.
It will turn out that for purposes of induction we will need to consider D-modules supported on a log stratum of $X$.
As explained in the preliminaries, the log strata of $X$ have a natural structure as separated smooth \emph{idealized} log schemes and hence we need to consider D-modules on such.

After introducing the basics, we will discuss how logarithmic D-modules interact with restriction to log strata.
The general idea is that when crossing to a deeper log stratum, coherent $\lD_X$-modules behave very similarly to coherent $\O_X$-modules.

\subsection{Logarithmic D-modules and operations on them}

Let $X$ be a separated smooth idealized log scheme over $\kk$.
The basic set-up of logarithmic D-modules is very similar to that of D-modules on smooth varieties.
Thus we will only give a brief outline and trust that the reader can fill in the details and proofs from any standard reference on D-modules (such as the book \cite{HottaTakeuchiTanisaki:2008:DModulesPerverseSheavesRepresentationTheory}).

\subsubsection{Higher-order logarithmic differential operators}\label{sec:higher.differential}
Let us start by describing higher-order differential operators on smooth (idealized) log schemes. If $X$ is a smooth log scheme, we denote by $\cD_X^{\leq n}$ the subsheaf of the usual sheaf of differential operators of the scheme $X$, generated by the image of the sheaves $\cT^{\otimes i}_X$ for $0\leq i\leq n$. We also set $\cD_X=\bigcup_n \cD_X^{\leq n}$. This is the sheaf of \emph{logarithmic differential operators} on $X$.

In order to write down an explicit local presentation of $\cD_X$ in terms of generators and relations, let us recall a bit of terminology and Lemma 3.3.4 of \cite{AbramovichTemkinWlodarczyk:arXiv:PrincipalizationOfIdealsOnToroidalOrbifolds}, that describes the tangent sheaf $\cT_X$. Let $X$ be a smooth log scheme and $p\in X$ be a point, and denote by $S$ the ``locally closed log stratum'' through $p$. In other words, $S$ is the irreducible component of $X^k\setminus X^{k+1}$ that contains $p$, where $k$ is the rank of the free abelian group $\overline{M}_p^\gp$. Then \emph{(logarithmic) coordinates or parameters} of $X$ at $p$ are given by a finite set $x_1,\hdots, x_n\in \cO_{X,p}$ of elements that give a system of parameters (i.e. a regular sequence of maximal length) in $\cO_{S,p}$, together with an embedding $u\colon \overline{M}_p\hookrightarrow \cO_{X,p}$ lifting to the inclusion $M_p\subseteq \cO_{X,p}$ (note that $\overline{M}^\gp_p$ is a finitely generated free abelian group).

\'Etale locally around $p$ there is a strict classically smooth morphism $X\to \TV{\overline{M}_p}$ sending $p$ to the torus fixed point of $\TV{\overline{M}_p}$. By smoothness, \'etale locally around $p$ this map can be identified with the projection $\bA^n\times \TV{\overline{M}_p}\to \TV{\overline{M}_p}$. The coordinates $x_1,\hdots,x_n \in \cO_{X,p}$ correspond then to coordinates of $\bA^n$ in this local picture.

\begin{Prop}
Let $X$ be a smooth log scheme, $p \in X$ a closed point, and $x_1,\hdots, x_n\in \cO_{X,p}$ and $u\colon  \overline{M}_p\hookrightarrow \cO_{X,p}$ logarithmic coordinates at $p$. Then there is a canonical isomorphism
$$
\cT_{X,p}\cong \mathfrak{D}(N_p)\oplus \left( \bigoplus_{i} \cO_{X,p}\cdot \frac{\partial}{\partial x_i} \right)
$$
where 
\begin{itemize}
\item $\frac{\partial}{\partial x_i}$ is the unique section of $\cT_{X,p}$ that is zero on $u(\overline{M}_p)$ and such that $\frac{\partial}{\partial x_i}x_j=\delta_{ij}$, and
\item $N_p=\Hom_{\bZ}(\overline{M}_p^\gp,\cO_{X,p})$ and for $L\in N_p$, the element $\mathfrak{D}_L\in \cT_{X,p}$ is the unique derivation such that $\mathfrak{D}_L(u(m))=L(m)\cdot u(m)$ for every $m\in \overline{M}_p$ and $\mathfrak{D}_L(x_i)=0$ for all $i$.
\end{itemize}
\end{Prop}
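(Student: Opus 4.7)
The plan is to first exhibit each of the claimed operators as a well-defined logarithmic derivation, and then to identify their combined span with all of $\cT_{X,p}$ by reduction to a standard local model.

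First, recall from \cite[Definition~\RomanNum{IV}.1.2.1]{Ogus:logbook} that a logarithmic derivation of $\cO_{X,p}$ into itself is a pair $(D,\delta)$ consisting of a $\kk$-derivation $D\colon \cO_{X,p}\to\cO_{X,p}$ and an additive map $\delta\colon M_p^\gp \to \cO_{X,p}$, satisfying $D(\alpha(m)) = \alpha(m)\delta(m)$ for all $m\in M_p$. On units $v\in\cO_{X,p}^\times$ the compatibility forces $\delta(v)=D(v)/v$, and $M_p$ is generated as a monoid by $\cO_{X,p}^\times$ together with the image of $u$, so $(D,\delta)$ is determined by the values of $D$ on $x_1,\ldots,x_n$ and on $u(\overline{M}_p)$. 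I would then define $\partial/\partial x_i$ and $\mathfrak{D}_L$ by prescribing these values as in the statement, extending $D$ by the Leibniz rule, and setting $\delta$ on $u(\overline{M}_p)$ by the formula forced by compatibility (namely zero in the first case and $L$ in the second). A direct check shows that this produces a well-defined log derivation, which gives both the existence and uniqueness assertions.

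Next, I would prove that the resulting map
\[
    \mathfrak{D}(N_p) \oplus \bigoplus_{i}\cO_{X,p}\cdot\frac{\partial}{\partial x_i} \longrightarrow \cT_{X,p}
\]
is an isomorphism by descending to the local model $\bA^n\times\TV{\overline{M}_p}$. As recalled in the excerpt, an étale neighborhood of $p$ admits a strict étale morphism to this product, under which the $x_i$ pull back from the coordinates on $\bA^n$ and $u$ matches the tautological chart of the toric factor. By the proposition preceding Proposition~\ref{prop:differential restriction}, strict étale morphisms preserve $\Omega$, and hence also $\cT=\sheafHom_{\cO}(\Omega,\cO)$; it therefore suffices to verify the decomposition at the image of $p$ in $\bA^n\times\TV{\overline{M}_p}$.

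On this product, $\Omega$ decomposes as the direct sum of the pullbacks of $\Omega_{\bA^n}$ and $\Omega_{\TV{\overline{M}_p}}$. The first summand is the free $\cO$-module on $dx_1,\ldots,dx_n$, while the second equals $\cO\otimes_{\bZ}\overline{M}_p^\gp$ by Example~\ref{example:differentials.toric}. Dualizing and passing to the stalk yields precisely
\[
    \cT_{X,p} \cong \Hom_{\bZ}(\overline{M}_p^\gp,\cO_{X,p}) \oplus \bigoplus_i \cO_{X,p}\cdot\frac{\partial}{\partial x_i} = N_p \oplus \bigoplus_i \cO_{X,p}\cdot\frac{\partial}{\partial x_i},
\]
and a direct unwinding of the duality pairing identifies the first summand with $\mathfrak{D}(N_p)$. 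The main subtlety is really in the first step: one must ensure that prescribing $D$ on the chosen generators truly extends to a \emph{logarithmic} derivation, with a consistent choice of $\delta$, rather than merely an ordinary derivation of $\cO_{X,p}$. Once this bookkeeping is done, étale descent together with the toric computation finishes the argument without further obstacle.
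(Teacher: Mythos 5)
Your proof is essentially correct, but it is worth noting that the paper itself gives no proof at all: this proposition is quoted from Lemma~3.3.4 of Abramovich--Temkin--W\l{}odarczyk (as the paper says immediately after the statement), so your argument is a self-contained reconstruction of what that citation encapsulates. Your route --- unwinding Ogus's definition of a log derivation as a compatible pair $(D,\delta)$, observing that $\delta$ is forced on units and that $M_p$ is generated by $\cO_{X,p}^\times$ together with a lift of $\overline{M}_p$ (which exists since $\overline{M}_p^\gp$ is free), and then reducing via a strict \'etale chart to the model $\bA^n\times\TV{\overline{M}_p}$, where $\Omega$ splits as the free module on $dx_1,\dotsc,dx_n$ plus $\cO\otimes_{\ZZ}\overline{M}_p^\gp$ and dualizing gives the claimed internal direct sum --- is the standard argument and is sound. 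Two points deserve tightening. First, your claim in step one that one can define $\partial/\partial x_i$ and $\mathfrak{D}_L$ by ``extending $D$ by the Leibniz rule'' is too quick as stated: $\cO_{X,p}$ is not generated as a $\kk$-algebra by the $x_i$ and $u(\overline{M}_p)$, but is only (formally) \'etale over a localization of $\kk[x_1,\dotsc,x_n,\overline{M}_p]$, so both existence and uniqueness of the extension rest on the fact that derivations extend uniquely along \'etale maps; since you invoke the local model anyway in step two, the cleaner organization is to construct the operators in the model (where the Leibniz extension is a genuine polynomial computation) and transport them, letting uniqueness follow from the determination of $(D,\delta)$ on the stated generators plus formal \'etaleness. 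Second, your comparison of stalks along the strict \'etale chart is legitimate only because the paper works on the small \'etale site, where stalks at corresponding geometric points agree along \'etale morphisms (equivalently, $\cT_{X,p}\cong\cT_{Y,q}\otimes_{\cO_{Y,q}}\cO_{X,p}$ with the generators matching, using that $\Omega$ is locally free so $f^*\sheafHom_{\cO}(\Omega,\cO)\cong\sheafHom_{\cO}(f^*\Omega,\cO)$); this is worth one sentence. With those caveats addressed, your computation of the pairing --- that $d(u(m))=u(m)\otimes m$ in Ogus's presentation of $\Omega_X$, so the dual of the summand $\cO\otimes_{\ZZ}\overline{M}_p^\gp$ consists exactly of the derivations $\mathfrak{D}_L$ with $\mathfrak{D}_L(u(m))=L(m)\,u(m)$ --- correctly identifies the first summand with $\mathfrak{D}(N_p)$, and the argument goes through; what your approach buys over the paper's bare citation is an explicit verification that only uses facts already recorded in the paper (Ogus's construction of $\Omega$, invariance under strict \'etale maps, and the toric computation of Example~\ref{example:differentials.toric}).
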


The map $\mathfrak{D}\colon N_p \to \cT_{X,p}$ is denoted by $\bD$ in \cite{AbramovichTemkinWlodarczyk:arXiv:PrincipalizationOfIdealsOnToroidalOrbifolds}. More precisely, from the proof of Lemma 3.3.4 in \cite{AbramovichTemkinWlodarczyk:arXiv:PrincipalizationOfIdealsOnToroidalOrbifolds} it follows that if $m_1,\hdots, m_k$ are elements of $\overline{M}_p$ that form a basis of $\overline{M}_p^\gp$, then $\cT_{X,p}$ is freely generated by $\frac{\partial}{\partial x_1},\hdots, \frac{\partial}{\partial x_n}$ and $\partial \log(u(m_1)),\hdots, \partial \log(u(m_k))$, where $\partial \log(u(m_i)) = \text{\enquote{$m_i\frac{\partial}{\partial m_i}$}}$ is the element of $\mathfrak{D}(N_p)$ associated to the homomorphism in $N_p$ sending $m_i$ to $1$ and every other $m_j$ to $0$.

In order to keep the notation light, we will denote $\frac{\partial}{\partial x_i}$ by $\partial_i$ and $\partial \log(u(m_i))$ by $\partial_{m_i}$. Note that since $\cT_{X}$ is locally free, we can extend generators of this presentation to a neighbourhood of $p$.

\begin{Cor}\label{cor:local.description.differentials}
Let $X$ be a smooth log scheme, $p\in X$ a closed point, and choose logarithmic coordinates around $p$ as above.

Then \'etale locally around $p$, the sheaf $\cD_X$ can be described as the (non-commutative) algebra generated by $x_i$ for $0\leq i\leq n$, by $t^m$ with $m\in \overline{M}_p$, by symbols $\partial_i$ for $0\leq i \leq n$ and $\partial_{m_i}$, where $m_1,\hdots, m_k$ are elements of $\overline{M}_p$ that form a basis of $\overline{M}_p^\gp$, subject to the following relations:
\begin{align*}
[x_i,x_j]=0 & \;\;\;\;  \text{ for all } \;\; 0\leq i,j\leq n\\
[t^m,t^{m'}]=0 &  \;\;\;\;  \text{ for all }  \;\; m,m' \in \overline{M}_p \\
t^{m+m'}=t^m\cdot t^{m'} &  \;\;\;\;  \text{ for all }  \;\; m,m'\in \overline{M}_p\\
[\partial_i,\partial_j]=0 & \;\;\;\;  \text{ for all } \;\;  1\leq i,j\leq n\\
[\partial_i,\partial_{m_j}]=0 & \;\;\;\;  \text{ for all } \;\; 0\leq i\leq n  \; \text{ and } \; 0\leq j\leq k\\
[\partial_{m_i},\partial_{m_j}]=0 & \;\;\;\;  \text{ for all } \;\;  1\leq i,j\leq k\\
[\partial_i, x_j]=\delta_{ij} & \;\;\;\;  \text{ for all } \;\; 0\leq i,j\leq n \\
[\partial_{m_i},t^{m}]= a_i t^m & \;\;\;\;  \text{ for all } \;\; 0\leq i\leq k \; \text{ and } \; m=\, \scriptstyle\sum_i \displaystyle a_i m_i \in \overline{M}_p \\
[\partial_i, t^m]=0 & \;\;\;\;  \text{ for all } \;\; 0\leq i\leq n \; \text{ and } \; m\in \overline{M}_p\\
[\partial_{m_i}, x_j]=0 & \;\;\;\;  \text{ for all } \;\; 0\leq i\leq k  \; \text{ and } \; 0\leq j\leq n\\
\end{align*}
\end{Cor}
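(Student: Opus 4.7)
The plan is to reduce the statement to an explicit étale-local toric model and then perform a standard PBW/order-filtration comparison. By the discussion preceding the corollary (and the existence of classically étale strict charts for a smooth log scheme), étale locally around $p$ there is a strict classically étale morphism $\varphi\colon U\to \bA^n\times \TV{\overline{M}_p}$ sending $p$ to the product of the origin of $\bA^n$ and the torus-fixed point of $\TV{\overline{M}_p}$, and the logarithmic coordinates $x_i$ and $u$ are pulled back from this model. Since $\varphi$ is strict and classically étale, the proposition preceding this corollary (together with the identification of $\cT_U$ as $\varphi^*\cT_{\bA^n\times \TV{\overline{M}_p}}$) implies that $\cD_U$ is the pullback of the sheaf of logarithmic differential operators on $\bA^n\times \TV{\overline{M}_p}$. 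Thus it suffices to establish the presentation for the model $X_0=\bA^n\times \TV{P}$, with $P=\overline{M}_p$.

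Next, on $X_0$ the listed relations are all direct verifications from the action of logarithmic derivations on $\cO_{X_0}=\kk[x_1,\ldots,x_n]\otimes\kk[P]$: the $\partial_i$ are ordinary partial derivatives in the $x_j$ variables and annihilate the monoid monomials $t^m$; the $\partial_{m_i}=\mathfrak{D}_{L_i}$ (where $L_i\in N_p$ is dual to $m_i$) annihilate the $x_j$ and satisfy $\partial_{m_i}(t^m)=a_i t^m$ by the very definition given in the preceding proposition; the monoid and polynomial relations among the commutative generators are inherited from $\cO_{X_0}$; and all bracket relations between derivations either follow from commutativity of the toric action with the $\bA^n$-directions or from the fact that $\partial_{m_i}(\partial_{m_j}(t^m))=a_i a_j t^m$ is symmetric in $i,j$. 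Hence there is a canonical surjective homomorphism from the abstract algebra $A$ presented by the stated generators and relations onto $\cD_{X_0}$.

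Finally, to see that this surjection is injective, I would filter $A$ by declaring the $x_j$ and $t^m$ to be in degree $0$ and the $\partial_i$, $\partial_{m_j}$ to be in degree $\leq 1$. Using the listed commutation relations, any monomial in $A$ can be rewritten (modulo lower order) as a polynomial in the $\partial_i,\partial_{m_j}$ with coefficients in the commutative subalgebra $\cO_{X_0}$ generated by the $x_j$ and $t^m$, so that $\gr A$ is a quotient of $\mathrm{Sym}_{\cO_{X_0}}\!\bigl(\bigoplus_i \cO_{X_0}\partial_i\oplus\bigoplus_j \cO_{X_0}\partial_{m_j}\bigr)$. By the preceding proposition this latter module is exactly $\mathrm{Sym}_{\cO_{X_0}}\cT_{X_0}$, which in turn surjects onto $\gr \cD_{X_0}$. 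Composing with the associated graded of $A\to \cD_{X_0}$ (which is already surjective) forces all three maps to be isomorphisms, and hence $A\cong \cD_{X_0}$.

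The main obstacle is the final step: ensuring that no unexpected relations appear in $\gr A$ that would obstruct the PBW comparison. This is controlled by the freeness of $\cT_{X_0}$ established in the preceding proposition, which guarantees that $\mathrm{Sym}_{\cO_{X_0}}\cT_{X_0}$ is an honest polynomial algebra on the stated generators, so that the composite of surjections $\gr A\twoheadrightarrow\mathrm{Sym}_{\cO_{X_0}}\cT_{X_0}\twoheadrightarrow\gr\cD_{X_0}$ must be an equality of free $\cO_{X_0}$-modules of the same rank in each filtration degree.
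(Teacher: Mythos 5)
Your strategy---étale reduction to the toric model $X_0=\bA^n\times\TV{P}$ with $P=\overline{M}_p$, direct verification of the relations on the model, and a filtered/PBW comparison---is exactly the argument the paper leaves implicit (the corollary is stated without proof, as a consequence of the preceding proposition on the free generation of $\cT_{X,p}$ and the local identification of $X$ with $\bA^n\times\TV{\overline{M}_p}$), and your first two steps carry it out correctly. The problem is in the final step, which you yourself flag as the main obstacle. From the relations you correctly obtain a surjection $\mathrm{Sym}_{\cO_{X_0}}\cT_{X_0}\twoheadrightarrow\gr A$ (note that your closing sentence reverses this arrow), and hence surjections $\mathrm{Sym}_{\cO_{X_0}}\cT_{X_0}\twoheadrightarrow\gr A\twoheadrightarrow\gr\cD_{X_0}$; to conclude that all of these are isomorphisms you must show the \emph{composite} is injective. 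Your justification---that these are ``free $\cO_{X_0}$-modules of the same rank in each filtration degree''---is circular: the freeness of $\cT_{X_0}$ controls the rank of $\mathrm{Sym}_{\cO_{X_0}}\cT_{X_0}$ only, while the claim that $\gr\cD_{X_0}$ is free of that same rank is precisely the assertion at stake. Since $\cD_{X_0}$ is \emph{defined} as the subalgebra of honest differential operators generated by $\cO_{X_0}$ and $\cT_{X_0}$, it could a priori satisfy relations beyond those forced by the symbols, and nothing you have written rules this out.

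The missing half-step is standard and short: one must check that the monomials $\partial^\beta\partial_m^\gamma$ are $\cO_{X_0}$-linearly independent in $\gr\cD_{X_0}$. For instance, $\cD_{X_0}$ sits inside the torsion-free sheaf of all $\kk$-linear differential operators on $X_0$, so restriction to the dense open subset $\bA^n\times T$, where $T=\Spec\kk[P^{\gp}]\subseteq\TV{P}$ is the big torus, is injective; there the log structure is trivial and $\partial_1,\dotsc,\partial_n,\partial_{m_1},\dotsc,\partial_{m_k}$ form an $\cO$-basis of the ordinary tangent sheaf, so the classical PBW theorem gives the independence, and since $\mathrm{Sym}_{\cO_{X_0}}\cT_{X_0}$ is a free (hence torsion-free) $\cO_{X_0}$-module, injectivity over a dense open forces injectivity over all of $X_0$. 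Alternatively, evaluate a purported top-order relation $\sum_{\beta,\gamma}f_{\beta\gamma}\,\partial^\beta\partial_m^\gamma=0$ on the monomials $x^\alpha t^m$ and use that $\partial_{m_i}$ acts on $t^m$ with eigenvalue $a_i$ to separate the coefficients by a Vandermonde-type argument. With either fix inserted (and the arrow direction corrected), your proof is complete and coincides with the argument the paper intends.
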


The variables $x_i$ and $t^m$ of the statement should of course be identified with the generators of the algebra $\kk[x_1,\hdots, x_n, \overline{M}_p]$, that describes the structure sheaf $\cO_X$ close to the point $p$.

\begin{Ex}\label{example:diff}
If $X=\bA^1=\Spec \kk[x]$ with the toric log structure, then $\cD_X$ is generated by $x$ and $\partial$, with $[\partial,x]=x$. In more standard notation, $\partial$ should be denoted by $x\frac{\partial}{\partial x}$.
\end{Ex}

If $X$ is idealized, we can define $\lD_X$ as follows: \'etale locally around every point we can embed $X$ in a toric variety $\TV{P}$, and restrict the sheaf $\lD_{\TV{P}}$ to $X$. One can check that these locally defined sheaves are canonically isomorphic on double intersections of the fixed \'etale cover of $X$, and by \'etale descent for quasi-coherent sheaves of algebras we obtain a global sheaf $\lD_X$. This construction is justified by Proposition \ref{prop:differential restriction}.

Locally around a closed point $p\in X$ we can find a classically smooth morphism  $X\to \ITV{P}{K}$ for $P=\overline{M}_p$ and some ideal $K\subseteq P$, and hence \'etale locally around $p$ we can identify $X$ with $\bA^n\times \ITV{P}{K}$ for some $n$. In this case, logarithmic coordinates at $p$ are given by the coordinates $x_1,\hdots, x_n$ of $\bA^n$ (corresponding to elements of $\cO_{X,p}$ giving a system of parameters of $\cO_{S,p}$, where $S$ is the locally closed log stratum through $p$), and by a map $u\colon \overline{M}_p\to \cO_{X,p}$ lifting to $M_p\to \cO_{X,p}$ (that in this case need not be injective).

As a consequence of Proposition \ref{prop:differential restriction}, we have:

\begin{Cor}\label{cor:local.description.differentials.idealized}
Let $X$ be a smooth idealized log scheme, $p\in X$ a closed point, and choose logarithmic coordinates around $p$ as above. Moreover let $K\subseteq \overline{M}_p$ be the ideal defining the idealized structure on $X$.

Then \'etale locally around $p$, the sheaf $\cD_X$ can be described as the (non-commutative) algebra generated by $x_i$ for $0\leq i\leq n$, by $t^m$ with $m\in \overline{M}_p$, by symbols $\partial_i$ for $0\leq i \leq n$ and $\partial_{m_i}$, where $m_1,\hdots, m_k$ are elements of $\overline{M}_p$ that form a basis of $\overline{M}_p^\gp$, subject to all the relations as in Corollary \ref{cor:local.description.differentials}, and also to $t^k=0$ for $k\in K$.
\end{Cor}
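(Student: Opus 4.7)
The plan is to reduce the statement to the local model $\ITV{P}{K}$, and then invoke the non-idealized presentation from Corollary~\ref{cor:local.description.differentials}. By the definition of smoothness for idealized log schemes, \'etale locally around $p$ there is a strict classically smooth morphism $X\to \ITV{P}{K}$ with $P=\overline{M}_p$. Since strict and classically smooth morphisms are, \'etale locally, projections from $\mathbb{A}^n\times \ITV{P}{K}$, it suffices to prove the claim in the case $X=\mathbb{A}^n\times \ITV{P}{K}$, for then the log coordinates on $X$ at $p$ correspond exactly to the coordinates of $\mathbb{A}^n$ together with a lift $u\colon \overline{M}_p\to \cO_{X,p}$ of the canonical chart on $\ITV{P}{K}$.

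Next, I would recall the construction of $\lD_X$ in the idealized case: \'etale locally, one realizes $X$ as a closed subscheme of the toric variety $\mathbb{A}^n\times \TV{P}$ (via the embedding $j\colon \ITV{P}{K}\hookrightarrow \TV{P}$) and $\lD_X$ is the restriction $j^*\lD_{\mathbb{A}^n\times \TV{P}}$. By Proposition~\ref{prop:differential restriction}, $j^*\cT_{\TV P}\cong \cT_{\ITV{P}{K}}$, so the generators $\partial_i$ and $\partial_{m_j}$ from Corollary~\ref{cor:local.description.differentials} (applied to the smooth log scheme $\mathbb{A}^n\times \TV{P}$) pull back to generators of $\cT_X$. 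Meanwhile, the structure sheaf $\cO_X$ is the quotient of $\cO_{\mathbb{A}^n\times \TV{P}}=\kk[x_1,\dots,x_n,P]$ by the ideal $\langle t^k : k\in K\rangle$.

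From here the presentation is essentially forced: $\lD_X$ is generated, as a (non-commutative) algebra, by the images of the $x_i$, the $t^m$, the $\partial_i$ and the $\partial_{m_j}$, subject to all the relations of Corollary~\ref{cor:local.description.differentials} together with $t^k=0$ for $k\in K$. One must check that no further relations are imposed; equivalently, that the natural surjection from the algebra presented in the statement onto $\lD_X$ is an isomorphism. This can be seen by filtering both sides by order of differential operator and comparing associated gradeds: for $\lD_X$ this is the symmetric algebra $\mathrm{Sym}_{\cO_X}\cT_X$ (which is locally free by the idealized analogue of the arguments after Proposition~\ref{prop:differential restriction}), and for the presented algebra one uses the commutator relations to put any word into PBW-normal form, so the associated graded is free over $\cO_X$ on the same set of generators. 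Consistency of the additional relation $t^k=0$ with the commutator $[\partial_{m_i},t^k]=a_i t^k$ is automatic because both sides vanish in $\cO_X$.

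The main obstacle I expect is precisely this last verification: showing that no extra relations are introduced when restricting $\lD_{\TV P}$ to the non-reduced-looking closed subscheme $\ITV{P}{K}$. The point is that although the underlying scheme may shrink (and may acquire a smaller classical dimension), the log tangent sheaf does \emph{not} shrink, by Proposition~\ref{prop:differential restriction}, so the derivations $\partial_{m_j}$ genuinely survive in $\cT_X$ and behave as free generators over $\cO_X$. Once this is in place, a PBW-style argument completes the proof.
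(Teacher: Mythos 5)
Your proposal is correct and takes essentially the same route as the paper, whose entire proof is that the statement is an ``immediate combination'' of Proposition~\ref{prop:differential restriction} and Corollary~\ref{cor:local.description.differentials}: reduce to the local model $\bA^n\times\ITV{P}{K}$, use the definition of $\lD_X$ as the restriction of $\lD$ from a local toric embedding, and note via Proposition~\ref{prop:differential restriction} that the log tangent sheaf does not shrink. Your closing PBW/associated-graded verification that no extra relations appear is a correct way to fill in the detail the paper leaves implicit (it can also be seen more directly: the commutation relations $[\partial_i,t^k]=0$ and $[\partial_{m_i},t^k]=a_i t^k$ show that the left ideal generated by $\{t^k : k\in K\}$ in $\lD_{\TV{P}}$ is two-sided, so the restriction is precisely the quotient by the added relations $t^k=0$).
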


\begin{proof}
This is an immediate combination of Proposition \ref{prop:differential restriction} and Corollary \ref{cor:local.description.differentials}.
\end{proof}

\begin{Ex}
If $X$ is the standard log point, then $\cD_X$ is a $\kk$-algebra, freely generated by a single element $\partial$. Of course, $\partial$ is the restriction to the origin of the generator $x\frac{\partial}{\partial x}$ of Example \ref{example:diff}. In particular, in contrast with the situation for usual differentials, the element $x\frac{\partial}{\partial x}$ does not become zero when $x=0$.
\end{Ex}

In analogy with the classical case, we will endow $\lD_X$ with the \emph{order filtration}, i.e.~the filtration for which the generators $x_i$ and $t^m$ are in degree $0$ and $\partial_i$ and $\partial_{m_j}$ are in degree $1$.
This makes $\lD_X$ into a quasi-coherent differential $\O_X$-algebra in the sense of Section~\ref{sec:almost_commutative}.
The corresponding associated graded is isomorphic to $\pi _{*}\O_{T^*X}$, where $\pi \colon T^*X \to  X$ is the log cotangent bundle. In particular it follows that $\lD_X$ is left and right noetherian (cf.~\cite[Proposition~D.1.4]{HottaTakeuchiTanisaki:2008:DModulesPerverseSheavesRepresentationTheory}).

\subsubsection{Logarithmic D-modules}

Let $X$ be a separated smooth idealized log scheme of finite type over $\kk$.
A \emph{logarithmic D-module} on $X$ will be a sheaf $\sheaf F$ of left modules for the sheaf of algebras $\cD_X$. A homomorphism of log D-modules is a homomorphism of sheaves of left $\cD_X$-modules. We will denote the category of log D-modules on $X$ by $\catDMod{X}$.
In the rest of the paper we will suppress the word ``logarithmic'', and just use ``D-module'' or ``$\lD_X$-module''. Ordinary D-modules will not play any role, so there is no risk of confusion.

As in the classical case, we have the following equivalent characterization of D-modules:

\begin{Prop}\label{prop:D-mod}
Let $\sheaf F$ be a sheaf of $\cO_X$-modules. Then a structure of D-module on $\sheaf F$ corresponds to a $\kk$-linear morphism $\nabla\colon \cT_X\to \mathrm{End}_\kk(\sheaf F)$ satisfying:
\begin{enumerate}
\item $\nabla_{f \theta}(s)=f\nabla_ \theta(s)$ \hspace{2cm} for $f\in \cO_X, \theta\in \cT_X, s\in \sheaf F$,
\item $\nabla_ \theta(fs)= \theta(f)s+f\nabla_ \theta(s)$ \hspace{.67cm} for $f\in \cO_X,  \theta\in \cT_X, s\in \sheaf F$,
\item $\nabla_{[ \theta_1, \theta_2]}(s)=[\nabla_{ \theta_1},\nabla_{ \theta_2}](s)$  \hspace{.6cm}  for $ \theta_1, \theta_2 \in \cT_X, s\in \sheaf F$.
\end{enumerate}
\end{Prop}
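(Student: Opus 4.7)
The plan is to adapt the classical proof (e.g., \cite[Section~1.2]{HottaTakeuchiTanisaki:2008:DModulesPerverseSheavesRepresentationTheory}) to the logarithmic setting, leveraging the explicit presentation of $\lD_X$ given by Corollary~\ref{cor:local.description.differentials.idealized}. The forward direction is essentially tautological: given a $\lD_X$-module structure on $\sheaf F$, define $\nabla_\theta(s) = \theta \cdot s$ for $\theta \in \cT_X \subseteq \lD_X$. Condition (i) is then just $\cO_X$-linearity of $\lD_X$ in its lowest-order generators; condition (ii) follows from the commutation relation $\theta \cdot f - f \cdot \theta = \theta(f)$ which holds for any log derivation inside $\lD_X$ (this is built into the relations $[\partial_i, x_j] = \delta_{ij}$ and $[\partial_{m_i}, t^m] = a_i t^m$ of Corollary~\ref{cor:local.description.differentials}); and condition (iii) is the compatibility of the Lie bracket on $\cT_X$ with the commutator in $\lD_X$.

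For the converse direction, the strategy is to work \'etale locally around a closed point $p \in X$, where Corollary~\ref{cor:local.description.differentials.idealized} gives an explicit presentation of $\lD_X$ as a non-commutative algebra on generators coming from $\cO_X$ (the elements $x_i$ and $t^m$) together with the derivations $\partial_i, \partial_{m_j}$, subject to a finite list of relations. Given $\sheaf F$ with an $\cO_X$-action and a map $\nabla$ as in the proposition, define the action of the $x_i$ and $t^m$ via the $\cO_X$-module structure, and the action of $\partial_i, \partial_{m_j}$ via $\nabla$. I would then verify that each of the relations listed in Corollary~\ref{cor:local.description.differentials.idealized} is satisfied on $\sheaf F$: the relations among the $x_i$ and $t^m$ alone hold because $\cO_X$ acts; the relations $[\partial, f] = \partial(f)$ (for $f$ either an $x_j$ or a $t^m$) translate exactly to condition (ii) applied to the derivations $\partial_i$ and $\partial_{m_j}$ with their explicit action on the coordinates; and the relations $[\partial_\alpha, \partial_\beta] = 0$ follow from condition (iii) together with the fact that the corresponding log vector fields commute in $\cT_X$.

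Having verified the relations, the universal property of the presentation produces a (local) action of $\lD_X$ on $\sheaf F$ extending the given $\cO_X$-module structure. The final step is to see that these locally defined actions patch to a global $\lD_X$-module structure. This is where the main technical attention is needed: the presentation in Corollary~\ref{cor:local.description.differentials.idealized} depends on a choice of log coordinates, so one must check that the extended action is intrinsic. The cleanest way to see this is to observe that the action we constructed is characterized by the requirement that sections of $\cO_X$ act through the given module structure and sections of $\cT_X$ act through $\nabla$; since $\cO_X$ and $\cT_X$ generate $\lD_X$ as a sheaf of $\kk$-algebras, this characterization is independent of coordinates and the local actions glue on overlaps. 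The main obstacle is precisely this independence-of-choices verification, but it reduces to the observation that conditions (i)--(iii) are intrinsic statements about $\cT_X$ that do not refer to any chosen basis.
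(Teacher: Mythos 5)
Your proposal is correct and takes essentially the same route the paper intends: the paper states Proposition~\ref{prop:D-mod} without proof, deferring to the classical argument (cf.~\cite{HottaTakeuchiTanisaki:2008:DModulesPerverseSheavesRepresentationTheory}), and your verification of the relations in the local presentation of Corollary~\ref{cor:local.description.differentials.idealized} followed by the coordinate-independence and gluing step is precisely the standard adaptation of that argument to the logarithmic setting. Your placement of condition (i) is also exactly right --- it is not needed for the listed relations themselves, but to identify the action of a general section $\theta = \sum f_i\partial_i + \sum g_j\partial_{m_j}$ of $\cT_X$ with $\nabla_\theta$, which is what makes the locally constructed actions intrinsic and hence glueable.
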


There is also a category of right $\lD_X$-modules, usually identified with $\cat{Mod}(\lD_X^\mathrm{op})$, and one can switch between left and right $\lD_X$-modules by tensoring with the log canonical bundle.

In fact, $\omega_X$ has a natural structure of a right $\lD_X$-module via the \emph{Lie derivative}: for $\theta\in \cT_X$, define $(\mathrm{Lie}\, \theta)\omega$ for $\omega \in \omega_X$ by
\[
((\mathrm{Lie}\, \theta)\omega) (\theta_1,\hdots, \theta_n)=\theta(\omega(\theta_1,\hdots,\theta_n))-\sum_{i=1}^n \omega(\theta_1,\hdots, [\theta,\theta_i],\hdots, \theta_n)
\]
where $\theta_i\in \cT_X$, and $n=\logdim X$. The identities
\begin{itemize}
\item $(\mathrm{Lie}\, [\theta_1,\theta_2])\omega=(\mathrm{Lie}\,\theta_1)((\mathrm{Lie}\,\theta_2)\omega)-(\mathrm{Lie}\,\theta_2)((\mathrm{Lie}\,\theta_1)\omega)$,
\item $(\mathrm{Lie}\,\theta)(f\omega)=f((\mathrm{Lie}\,\theta)\omega)+\theta(f)\omega$,
\item $(\mathrm{Lie}\,f\theta)\omega=(\mathrm{Lie}\,\theta)(f\omega)$
\end{itemize}
show (using the analogue for right $\lD_X$-modules of Proposition \ref{prop:D-mod}) that $\omega\theta=-(\mathrm{Lie}\, \theta)\omega$ defines a structure of right $\lD_X$-module on $\omega_X$.

For a left $\lD_X$-module $\sheaf F$, the tensor product $\omega_X\otimes_{\cO_X} \sheaf F$ can be endowed with a structure of right $\lD_X$-module: for sections $\omega\in \omega_X, s\in \sheaf F$ and $\theta\in \cT_X$, set $(\omega\otimes s)\theta=\omega\theta\otimes s-\omega\otimes \theta s$.
\begin{Lem}
    The functor $\sheaf F\mapsto \omega_X\otimes_{\cO_X} \sheaf F$ from $\cat{Mod}(\lD_X)$ to $\cat{Mod}(\lD_X^\mathrm{op})$ is an equivalence of categories with quasi-inverse given by $\sheaf G \mapsto \omega_X^\vee\otimes_{\cO_X} \sheaf G =\sheafHom_{\cO_X}(\omega_X,\,\sheaf G)$.
\end{Lem}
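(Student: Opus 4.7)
The plan is to exploit the fact that $\omega_X$, being the top wedge of the locally free sheaf $\Omega_X$ of rank $\logdim X$, is an invertible $\O_X$-module. Consequently, at the level of $\O_X$-modules the functors $\omega_X\otimes_{\O_X}-$ and $\omega_X^\vee\otimes_{\O_X}-=\sheafHom_{\O_X}(\omega_X,-)$ are already a pair of mutually quasi-inverse auto-equivalences via the standard evaluation and coevaluation maps $\omega_X\otimes_{\O_X}\omega_X^\vee\cong\O_X\cong\omega_X^\vee\otimes_{\O_X}\omega_X$. Hence the only content is to upgrade these equivalences to the level of $\lD_X$-modules.

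The first task is to verify that the formula $(\omega\otimes s)\theta=\omega\theta\otimes s-\omega\otimes \theta s$ really does define a right $\lD_X$-module structure on $\omega_X\otimes_{\O_X}\sheaf F$ (and not merely a $\cT_X$-action). This is done by checking the right-module analogues of the three conditions in Proposition~\ref{prop:D-mod}: $\O_X$-linearity in $\theta$, a (right) Leibniz rule, and compatibility with the Lie bracket $[\theta_1,\theta_2]$. The $\O_X$-linearity and Leibniz rule are straightforward from the corresponding identities on $\omega_X$ and on the left $\lD_X$-module $\sheaf F$; the bracket identity reduces, after expansion, to the three identities for the Lie derivative already recorded above together with the bracket relation on $\sheaf F$. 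A symmetric argument (using the left $\lD_X$-structure on $\omega_X^\vee$ induced from the right structure on $\omega_X$ through $\sheafHom_{\O_X}(\omega_X,-)$, which exists because $\sheafHom$ from a right module to a left module is naturally left) shows that the analogous formula turns $\omega_X^\vee\otimes_{\O_X}\sheaf G$ into a left $\lD_X$-module whenever $\sheaf G$ is a right $\lD_X$-module.

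The second task is to show that the natural $\O_X$-linear isomorphisms
\[
    \eta_{\sheaf F}\colon \omega_X^\vee\otimes_{\O_X}(\omega_X\otimes_{\O_X}\sheaf F)\xrightarrow{\;\cong\;}\sheaf F,\qquad
    \epsilon_{\sheaf G}\colon \omega_X\otimes_{\O_X}(\omega_X^\vee\otimes_{\O_X}\sheaf G)\xrightarrow{\;\cong\;}\sheaf G
\]
respect the $\lD_X$-actions. This is a purely local check, so I would work in an étale neighbourhood where a trivialising section $\omega$ of $\omega_X$ is fixed; then any element is of the form $\omega\otimes s$ (resp.\ $\omega^\vee\otimes t$) and one computes directly that the two successive sign changes introduced by the formulas $(\omega\otimes-)\theta$ and $\theta\cdot(\omega^\vee\otimes-)$ cancel, leaving precisely $\theta\cdot s$ (resp.\ $t\cdot\theta$). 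Independence of the choice of trivialisation follows because the constructions are $\O_X$-linear natural transformations.

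The main obstacle, though really a bookkeeping one rather than a conceptual one, is the sign management in the previous step: the definition of the right action uses the difference $\omega\theta\otimes s-\omega\otimes\theta s$, and the mirror definition on $\omega_X^\vee\otimes\sheaf G$ must be set up so that these signs cancel exactly under $\eta_{\sheaf F}$ and $\epsilon_{\sheaf G}$. Once the correct (sign-compatible) left $\lD_X$-module structure on $\omega_X^\vee\otimes\sheaf G$ is pinned down—most invariantly by requiring that the evaluation pairing $\omega_X\otimes\omega_X^\vee\to\O_X$ be a map of $\lD_X$-bimodules—the verification above goes through and the functors are quasi-inverse as claimed.
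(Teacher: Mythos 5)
Your proposal is correct and takes essentially the same route as the paper, whose proof simply records the left action $(\theta\phi)(s) = -\phi(s)\theta + \phi(s\theta)$ on $\sheafHom_{\cO_X}(\omega_X,\,\sheaf G)$ and remarks that the quasi-inverse verification proceeds exactly as in the classical case --- precisely the checks your local sign-cancellation argument spells out. One parenthetical slip worth noting: the standard fact you want is that $\sheafHom_{\cO_X}$ between two \emph{right} $\lD_X$-modules is naturally a \emph{left} $\lD_X$-module (not Hom from a right module to a left one), though your invariant characterization via the evaluation pairing pins down the same structure in the end.
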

These functors are sometimes called the \emph{side-changing operations}.

\begin{proof}
    The action of $\theta \in \cT_X$ on a section $\phi \in \sheafHom_{\cO_X}(\omega_X,\, \sheaf G)$ is given by $(\theta\phi)(s) = -\phi(s)\theta + \phi(s\theta)$.
    Exactly as in the classical case one immediately checks that the two functors are quasi-inverse to each other.
\end{proof}

We call a $\lD_X$-module \emph{quasi-coherent} if it is so as an $\O_X$-module.
A $\lD_X$-module $\sheaf F$ will be called \emph{coherent} if locally on $X$, $\sheaf F$ can be written as a cokernel
\[
\cD_X^n\to \cD_X^m\to \sheaf F\to 0
\]
of a morphism of free $\cD_X$-modules of finite rank. Since $\cD_X$ is noetherian, this notion of coherence is equivalent to the stronger one, requiring that $\sheaf F$ be locally finitely generated, and that every local finitely generated subsheaf of $\sheaf F$ be finitely presented.
We write $\catDModcoh{X}$ for the category of coherent D-modules and $\catDbcohDMod{X}$ for the full subcategory of $\catDDMod{X}$ consisting of complexes with finitely many nonzero cohomology sheaves, all of which are coherent.
As in the classical case, one proves that this is the same as the bounded derived category of $\catDModcoh{X}$, cf.~\cite[Proposition~\RomanNum{VI}.2.11]{Borel:1987:AlgebraicDModules}.
This follows from the following lemma, which again can be proven as in the classical case, cf.~\cite[Lemma~\RomanNum{VI}.2.3]{Borel:1987:AlgebraicDModules}.
\begin{Lem}\label{lem:coherent_subs}
  Let $\sheaf F$ be a quasi-coherent $\lD_X$-module.
  Then the following hold:
  \begin{enumerate}
    \item If $\sheaf F$ is coherent, then it is generated as a $\lD_X$-module by an $\O_X$-coherent $\O_X$-submodule.
    \item Let $U \subseteq X$ be open and such that $\res{\sheaf F}U$ is $\lD_U$-coherent. Then there exists a $\lD_X$-coherent submodule $\widetilde{\sheaf F}$ such that $\res{\widetilde{\sheaf F}}{U} = \res{\sheaf F}{U}$.
    \item $\sheaf F$ is the filtered colimit of its $\lD_X$-coherent submodules.
      \qed
  \end{enumerate}
\end{Lem}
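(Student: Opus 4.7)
The plan is to prove the three statements in the stated order, using as the main input the standard fact that on a noetherian scheme every quasi-coherent $\O_X$-module is the filtered union of its $\O_X$-coherent submodules, combined with the facts established in Section~\ref{sec:higher.differential} that $\lD_X$ is $\O_X$-quasi-coherent and both left and right noetherian.

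For \emph{(i)}, first I would work locally. Around any point pick an open $U$ on which $\res{\sheaf F}{U}$ admits a presentation $\lD_U^m \twoheadrightarrow \res{\sheaf F}{U}$, and let $\sheaf G_U \subseteq \res{\sheaf F}{U}$ be the image of $\O_U^m \subseteq \lD_U^m$. This $\sheaf G_U$ is $\O_U$-coherent and $\lD_U \cdot \sheaf G_U = \res{\sheaf F}U$. Since $X$ is noetherian, cover it by finitely many such opens $U_1, \dots, U_r$. For each $i$ extend $\sheaf G_{U_i}$ to a coherent $\O_X$-submodule $\widetilde{\sheaf G}_i \subseteq \sheaf F$ using the directed-union principle: $\sheaf F$ is the filtered colimit of its $\O_X$-coherent submodules, so a coherent subsheaf of $\res{\sheaf F}{U_i}$ extends, and one only needs finitely many such extensions. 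Finally, set $\sheaf G = \sum_{i=1}^r \widetilde{\sheaf G}_i$. Then $\sheaf G$ is $\O_X$-coherent and $\lD_X \cdot \sheaf G = \sheaf F$, since equality can be checked after restricting to each $U_i$.

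For \emph{(ii)}, apply \emph{(i)} on $U$ to find an $\O_U$-coherent $\sheaf G_U \subseteq \res{\sheaf F}U$ which $\lD_U$-generates $\res{\sheaf F}U$. By the same extension step as in \emph{(i)} choose an $\O_X$-coherent submodule $\sheaf G \subseteq \sheaf F$ with $\res{\sheaf G}U \supseteq \sheaf G_U$, and put $\widetilde{\sheaf F} = \lD_X \cdot \sheaf G \subseteq \sheaf F$. Because $\lD_X$ is left noetherian and $\sheaf G$ is $\O_X$-coherent, the natural surjection $\lD_X \otimes_{\O_X} \sheaf G \twoheadrightarrow \widetilde{\sheaf F}$ realises $\widetilde{\sheaf F}$ as a $\lD_X$-coherent quotient of a coherent $\lD_X$-module. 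On $U$ we have $\res{\widetilde{\sheaf F}}U \supseteq \lD_U \cdot \sheaf G_U = \res{\sheaf F}U$, and the reverse inclusion holds since $\widetilde{\sheaf F} \subseteq \sheaf F$.

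For \emph{(iii)}, write $\sheaf F$ as the filtered union of its $\O_X$-coherent submodules $\{\sheaf G_\alpha\}$. For each $\alpha$ the module $\lD_X \cdot \sheaf G_\alpha$ is $\lD_X$-coherent by the argument in \emph{(ii)}. This family is filtered (closed under finite sums of its indices), and its colimit contains every $\sheaf G_\alpha$, hence equals $\sheaf F$.

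The main obstacle is the extension step shared by \emph{(i)} and \emph{(ii)}: promoting an $\O$-coherent subsheaf of $\res{\sheaf F}U$ to an $\O$-coherent subsheaf of $\sheaf F$ on all of $X$. This rests on noetherianity of $X$ and the quasi-coherence of $\sheaf F$; once it is granted, the rest of the argument is essentially formal, and the only logarithmic input is that the sheaf $\lD_X$ constructed in Section~\ref{sec:higher.differential} is noetherian and $\O_X$-quasi-coherent, exactly as in the classical setting of \cite[Lemma~\RomanNum{VI}.2.3]{Borel:1987:AlgebraicDModules}.
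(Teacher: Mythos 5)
Your proposal is correct and is essentially the paper's own argument: the paper gives no written proof, stating only that the lemma ``can be proven as in the classical case, cf.~\cite[Lemma~\RomanNum{VI}.2.3]{Borel:1987:AlgebraicDModules}'', and your write-up is precisely that classical argument, resting on the same two inputs the paper identifies (noetherianity of $X$ and of $\lD_X$, and quasi-coherence of $\lD_X$ and $\sheaf F$), with the extension step for $\O_X$-coherent subsheaves handled by the standard directed-union principle. In particular your observation that mere containment $\res{\sheaf G}{U} \supseteq \sheaf G_U$ suffices, and that $\lD_X \cdot \sheaf G$ is coherent as a finitely generated quasi-coherent module over the noetherian sheaf $\lD_X$, is exactly how the classical proof transports to the logarithmic setting.
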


If $\sheaf F \in  \catDModcoh{X}$ is a coherent D-module we can choose a good filtration on it.
As in the classical case, Lemma~\ref{lem:coherent_subs} implies that one can choose a global filtration, cf.~\cite[Theorem~2.1.3]{HottaTakeuchiTanisaki:2008:DModulesPerverseSheavesRepresentationTheory}.
The associated graded is then a coherent $\gr \lD_X$-module and its support is called the \emph{characteristic variety} $\Ch(\sheaf F) \subseteq  T^*X$ of $\sheaf F$.
We will only ever consider $\Ch(\sheaf F)$ as a topological space and hence always endow it with the induced reduced structure.
Let us remark that $\dim T^*X = \dim X + \logdim X$.

\begin{Rem}
    If $\Ch(\sheaf F)$ is contained in the zero section $T^*_XX$ of the cotangent bundle, then as in the proof of \cite[Proposition~2.2.5]{HottaTakeuchiTanisaki:2008:DModulesPerverseSheavesRepresentationTheory} any good filtration of $\sheaf F$ must stabilize after finitely many steps and hence $\sheaf F$ is $\O_X$-coherent.
    Thus the subcategory of $\catDModcoh{X}$ of modules with characteristic variety contained in $T^*_XX$ is the same as Ogus's category $MIC_{coh}(X/\mathbb{C})$ of coherent sheaves equipped with a log connection \cite{Ogus:2003:LogarithmicRiemannHilbertCorrespondence}.
\end{Rem}

\subsubsection{Duality}\label{sec:duality}
We fix a rigid dualizing complex $\sheaf R$ for $\lD_X$ as in Section~\ref{sec:almost_commutative}.
If the underlying classical scheme $\underline X$ is smooth, then it has been shown in \cite{Chemla:2004:RigidDualizingComplexForQuantumEnvelopingAlgebras} that $\sheaf R \cong \lD_X \otimes_{\O_X} \Hom_{\O_X}(\canon[X], \canon[\underline X])[\dim X + \logdim X]$.
We will show in future work how to extend this formula to general log smooth, but not necessarily smooth, varieties (even though in that case $\canon[\underline X]$ might not be a line bundle).
In order to match the classical (i.e.~trivial log structure) setting, we introduce a shift in the duality functor and define the dual of any $\sheaf F \in  \catDDMod{X}$ as
\begin{align*}
  \DVerdier_X \sheaf F
  & = \sheafHom_{\lD_X}(\sheaf F, \sheaf R)[-\logdim X] \otimes_{\O_X} \canon[X]^{\vee} \\
  & = \sheafHom_{\lD_X}(\sheaf F, \sheaf R \otimes_{\O_X} \canon[X]^{\vee}[-\logdim X]),
\end{align*}
where we tensor with $\omega^{\vee}_X$ to obtain a functor of left $\lD_X$-modules.
By definition $\DVerdier_X$ restricts to an involutive anti-autoequivalence of $\catDbcohDMod{X}$.

\begin{Rem}
    It might be confusing that we have to introduce a shift by $-\logdim X$ (compared to $\dim X$ in the classical setting).
    This is because if $X$ is a smooth complex variety, then the rigid dualizing complex is already concentrated in cohomological degree $-2\dim X$.
    For good behaviour in the idealized case the shift by $-\logdim X$ proves to be more natural than the alternative of a shift by $-\dim X$.
    For example on the standard log point, the dualizing complex $\sheaf R$ is $\lD_X[1]$, so that $\DVerdier \O_X = \O_X[-1]$.
    If $p$ is the canonical map from the log point to the usual point then one easily computes that the pushforward along $p$ (see Section~\ref{sec:pf}) of $\O_X$ is the cohomology of $S^1$, shifted down by $1$.
    Thus our convention ensures (at least in this case) that pushworward along proper maps commutes with duality.
\end{Rem}

\begin{Lem}\label{lem:dual_of_hom}
    For any $\sheaf F,\sheaf G \in  \catDbcohDMod{X}$ there exists an isomorphism of $\kk_X$-modules
    \[
        \sheafHom_{\O_X}(\sheaf F, \sheaf G) \cong \DVerdier_X \bigl(\sheaf F \otimes_{\O_X} \DVerdier_X\sheaf G\bigr).
    \]
\end{Lem}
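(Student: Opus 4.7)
The plan is to exhibit a natural map from left to right, constructed by assembling the obvious evaluation pairings, and then to show that it is a quasi-isomorphism by dévissage.

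Write $\sheaf R' = \sheaf R \otimes_{\O_X} \omega_X^\vee[-\logdim X]$ for the twisted dualizing complex, so that $\DVerdier_X \sheaf H = \sheafHom_{\lD_X}(\sheaf H,\sheaf R')$. The right-hand side of the claimed isomorphism is then $\sheafHom_{\lD_X}(\sheaf F \otimes_{\O_X} \DVerdier_X \sheaf G,\sheaf R')$, where the tensor product is given the left $\lD_X$-action via the Leibniz rule, i.e.\ via the coproduct of $\lD_X$ on the two factors.

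First I would construct the natural map. The two evaluation pairings
\[
    \sheafHom_{\O_X}(\sheaf F,\sheaf G) \otimes_{\O_X} \sheaf F \longrightarrow \sheaf G
    \qquad\text{and}\qquad
    \sheaf G \otimes_{\O_X} \DVerdier_X \sheaf G \longrightarrow \sheaf R'
\]
(the second coming from the very definition of $\DVerdier_X \sheaf G$) compose to an $\O_X$-bilinear pairing
\[
    \sheafHom_{\O_X}(\sheaf F,\sheaf G) \otimes_{\O_X} \bigl(\sheaf F \otimes_{\O_X} \DVerdier_X \sheaf G\bigr) \longrightarrow \sheaf R',
\]
which one checks is $\lD_X$-linear in the second argument equipped with the Leibniz action. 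Adjunction then yields the map
\[
    \sheafHom_{\O_X}(\sheaf F,\sheaf G) \longrightarrow \sheafHom_{\lD_X}\bigl(\sheaf F \otimes_{\O_X} \DVerdier_X \sheaf G,\,\sheaf R'\bigr) = \DVerdier_X(\sheaf F \otimes_{\O_X} \DVerdier_X \sheaf G).
\]

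The second step is to verify that this is an isomorphism. Since both sides are bi-triangulated bifunctors from $\catDbcohDMod{X} \times \catDbcohDMod{X}$, a standard way-out argument reduces the problem to the case where $\sheaf F$ and $\sheaf G$ are (shifts of) free $\lD_X$-modules of finite rank. When $\sheaf F = \lD_X$ the statement becomes (after a side-changing identification) the biduality $\DVerdier_X \DVerdier_X \sheaf G \cong \sheaf G$ of the rigid dualizing complex $\sheaf R$, which holds by definition (Section \ref{sec:almost_commutative}); the case of free $\sheaf G$ is analogous.

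The main obstacle is the bookkeeping of the two distinct $\lD_X$-actions on $\sheaf F \otimes_{\O_X} \DVerdier_X \sheaf G$: the Leibniz action used to form $\sheafHom_{\lD_X}(-,\sheaf R')$ must be reconciled with the ``one-sided'' actions on each factor that are naturally used to build the evaluation pairing. This compatibility, which ultimately rests on the fact that $\sheaf R'$ carries a $\lD_X$-bimodule structure and that the side-changing operation $\sheaf G \mapsto \omega_X \otimes_{\O_X} \sheaf G$ intertwines the left and right $\lD_X$-structures, is the key calculation and proceeds exactly as in the classical (non-logarithmic) case.
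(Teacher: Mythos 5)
Your construction of the comparison map is reasonable in outline, but there are two genuine gaps, and the second is fatal. First, the asserted $\lD_X$-linearity of the assembled pairing is not a routine check that ``proceeds exactly as in the classical case'': the evaluation $\sheaf G \otimes_{\O_X} \DVerdier_X\sheaf G \to \sheaf R'$ (in your notation) is linear only for the \emph{sum} of the two commuting $\lD_X$-actions on $\sheaf R'$, not for the single action used to form $\DVerdier_X$, and reconciling these is precisely the content that the paper's proof packages into one Tensor--Hom adjunction step, $\sheafHom_{\lD_X}(\sheaf F \otimes_{\O_X} \DVerdier_X\sheaf G,\, \sheaf R') \cong \sheafHom_{\O_X}(\sheaf F,\, \DVerdier_X\DVerdier_X\sheaf G)$; after that, rigidity gives $\DVerdier_X\DVerdier_X\sheaf G \cong \sheaf G$ and the lemma follows in two lines, with no dévissage and no explicit pairing at all.

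Second and decisively, your dévissage runs over the wrong ring: the two sides do \emph{not} agree on free $\lD_X$-modules, so a reduction to free $\lD_X$-modules cannot close, and the base case does not ``become biduality''. Take $X$ the standard log point, so $\lD_X = \kk[\partial]$, $\sheaf R = \lD_X[1]$ and $\DVerdier_X\O_X = \O_X[-1]$, and take $\sheaf F = \lD_X$, $\sheaf G = \O_X$. The left-hand side $\sheafHom_{\O_X}(\lD_X, \O_X)$ is concentrated in degree $0$. On the right, the Leibniz module $\lD_X \otimes_{\O_X} \O_X$ is isomorphic to the free left module $\lD_X$ via $P \mapsto P \otimes 1$ (every derivation kills $1$), so
\[
    \DVerdier_X\bigl(\lD_X \otimes_{\O_X} \DVerdier_X \O_X\bigr)
    \cong \DVerdier_X\bigl(\lD_X[-1]\bigr)
    \cong \lD_X \otimes_{\O_X} \canon[X]^{\vee}[1],
\]
which is concentrated in degree $-1$; the analogous computation on the log line produces a mismatch between degrees $0$ and $-\logdim X$. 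What this reveals is that the lemma is really a statement about $\sheaf F$ as a complex of $\O_X$-modules: the dévissage that works --- the one behind Kashiwara's classical analogue, and consistent with how the lemma is actually deployed in the paper, namely with $\sheaf F = \O_X/\sheaf I$, which is $\O_X$-coherent --- resolves the first argument locally by finite free $\O_X$-modules, or one avoids resolutions altogether via the adjunction as the paper does. (Your reduction is also problematic for the independent reason, noted in the paper, that $\lD_X$ may have infinite global dimension, so free $\lD_X$-resolutions are infinite and the way-out bookkeeping in both variables simultaneously does not come for free; but the base-case failure above is already conclusive.)
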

  
We note that here $\sheafHom_{\O_X}(\sheaf F, \sheaf G)$ is a left $\lD_X$-module by the usual construction, cf.~\cite[Proposition~1.2.9]{HottaTakeuchiTanisaki:2008:DModulesPerverseSheavesRepresentationTheory}.

\begin{proof}
    By Tensor-Hom adjunction we have
    \begin{align*}
        \DVerdier_X(\sheaf F \otimes_{\O_X} \DVerdier_X \sheaf G) &\cong
        \sheafHom_{\lD_X}(\sheaf F \otimes_{\O_X} \DVerdier_X \sheaf G,\, \sheaf R \otimes_{\O_X} \canon[X]^{\vee}[-\logdim X]) \\ &\cong
        \sheafHom_{\O_X}(\sheaf F,\, \DVerdier_X\DVerdier_X\sheaf G) \\ & \cong
        \sheafHom_{\O_X}(\sheaf F,\, \sheaf G).
        \qedhere
    \end{align*}
\end{proof}

\subsubsection{Pullback and pushforward}\label{sec:pf}

In the remainder of this subsection all functors will be underived unless noted otherwise.

Let $f\colon X \to Y$ be a morphism of smooth idealized log schemes.
The definition of the pullback of a $\lD_Y$-module along $f$ exactly mirrors the definition of the pullback in the classical setting.
Let us give an explicit description.

For a $\lD_X$-module $\sheaf F \in \catDMod{Y}$ let
\[
    f^\epb \sheaf F = \O_X \otimes_{f^{-1}\O_Y} f^{-1}\sheaf F,
\]
be the $\O$-module pullback.
In order to define a $\lD_X$-module structure on $f^\epb \sheaf F$, it suffices to specify the action $\nabla_\theta$ of each $\theta \in \cT_X$ (Proposition \ref{prop:D-mod}).
For this we use the map
\[
    \cT_X \to f^*\cT_Y,\quad \theta \mapsto \tilde\theta
\]
dual to the canonical map $f^*\Omega_Y^1 \to \Omega_X^1$.
Then for $\theta \in \cT_X$ we define
\[
    \theta(\psi \otimes s) = \theta(\psi) \otimes s + \psi\tilde\theta(s), \qquad \psi \in \O_X,\, s \in \sheaf F.
\]
Here if $\tilde \theta = \sum \phi_j \otimes \theta_j$ (where $\phi_j \in \O_X$, $\theta_j \in \cT_Y$) we set $\psi\tilde\theta(s) = \sum \psi\phi_j \otimes \theta_j(s)$.
In particular this endows the sheaf
\[
    \lD_{X \to Y} = f^\epb\lD_Y = \O_X \otimes_{f^{-1}\O_Y} f^{-1}\lD_Y
\]
with the structure of a $(\lD_X,\, f^{-1}\lD_Y)$-bimodule and we have an isomorphism of $\lD_X$-modules
\[
    f^\epb\sheaf F \cong \lD_{X\to Y} \otimes_{f^{-1}\lD_Y} f^{-1}\sheaf F.
\]

\begin{Def}
    For a morphism $f\colon X \to Y$ of smooth idealized log schemes we define the \emph{naive pullback} $f^*\colon \catDDMod[-]{Y} \to \catDDMod[-]{X}$ by
    \[
        \sheaf M \mapsto \lD_{X\to Y} \Lotimes_{f^{-1}\lD_Y} f^{-1}(\sheaf M).
    \]
\end{Def}

\begin{Rem}
    Unlike in the classical setting, the !-pullback $f^!$ is not simply a shift of the naive pullback.
    To see why this is the case let us consider a smooth log variety $X$ with structure map $p\colon X \to \mathrm{pt}$.
    Then it is easy to see that the pushforward $p_\spf\colon \catDMod{X} \to \cat{Vect}$ (see Definition~\ref{def:spf}) is represented by $\O_X$.
    Thus the six-functor formalism would imply that (up to shift) $f^!k = \DVerdier \O_X$.
    On the other hand, at least if the underlying variety $\underline X$ is smooth, then by the discussion in Section~\ref{sec:duality} $\DVerdier_X\O_X = \sheafHom_{\O_X}(\omega_X,\, \omega_{\underline X})$ which in general differs from $f^*k = \O_X$. 

    For the purpose of the present work the naive pullback will be sufficient.
\end{Rem}

Similarly, the pushforward of a $\lD_X$-module along $f$ can again be defined as in the classical case.
Thus, given a \emph{right} D-module $\sheaf F$, we get a right $f^{-1}{\lD_Y}$-module $\sheaf F \otimes_{\lD_X} \lD_{X \to Y}$ and hence a right $\lD_Y$-module $f_*(\sheaf F \otimes_{\lD_X} \lD_{X \to Y})$ (where $f_*$ is the sheaf theoretic direct image functor).
In order to get a functor of \emph{left} D-modules we apply the side-changing operations.
Thus we arrive at
\begin{multline*}
    \canon[Y]^\vee \otimes_{\O_Y} f_*((\canon[X] \otimes_{\O_X} \sheaf F) \otimes_{\lD_X} \lD_{X \to Y})  
     \cong \canon[Y]^{\vee} \otimes_{\O_Y} f_*((\canon[X] \otimes_{\O_X} \lD_{X\to Y}) \otimes_{\lD_X} \sheaf F)  \\
     \cong f_*((\canon[X] \otimes_{\O_X} \lD_{X\to Y} \otimes_{f^{-1}\O_Y} f^{-1}\canon[Y]^{\vee}) \otimes_{\lD_X} \sheaf F),
\end{multline*}
where the first isomorphism follows by the analogue of \cite[Lemma~1.2.11]{HottaTakeuchiTanisaki:2008:DModulesPerverseSheavesRepresentationTheory}.
As in the classical case we define the $(f^{-1}\lD_Y,\, \lD_X)$-bimodule 
\[
    \lD_{Y \from X} =
    \canon[X] \otimes_{\O_X} \lD_{X\to Y} \otimes_{f^{-1}\O_Y} f^{-1}\canon[Y]^\vee.
\]

\begin{Def}\label{def:spf}
    For a morphism $f\colon X \to Y$ of smooth idealized log schemes we have a, potentially only partially defined, direct image functor $f_\spf\colon \catDbDMod{X} \to \catDDMod{Y}$ given by
    \[
        \sheaf F \mapsto f_\spf\sheaf F = \RR f_*(\lD_{Y \from X} \Lotimes_{\lD_X} \sheaf F).
    \]
\end{Def}

\begin{Rem}
    Since the underlying classical scheme $\underline{X}$ might have singularities, the ring $\lD_X$ does not necessarily need to be of finite global dimension.
    Thus the result of the tensor product in the above definition might not be bounded below, so that we cannot apply the right derived functor of $f_*$ to it.
    In this paper we will only consider situations where the pushforward is defined on the whole category.
    We will return to this issue in future work.
\end{Rem}

\subsection{Restrictions to log strata}\label{sec:Dmods-and-strata}

The purpose of this section is to prove a version of \cite[Proposition~5.2]{Kashiwara:2004:tStructureOnHolonomicDModuleCoherentOModules}.
In effect we can view this as obtaining a refinement of Corollary~\ref{cor:nc_dual_vanishing_and_support_derived} for local cohomology along log strata. 

Let $i\colon X^k \to  X$ be the inclusion of a log stratum, endowed with the induced idealized log structure.
Let $\sheaf I$ be the sheaf ideals defining $X^k$.
By \cite[Lemma~3.3.4(4)]{AbramovichTemkinWlodarczyk:arXiv:PrincipalizationOfIdealsOnToroidalOrbifolds}), the sheaf $\sheaf I$, and hence also $\O_{X^k} = \O_X/\sheaf I$, has a canonical $\lD_X$-module structure.
From the presentation of $\lD_X$ discussed in (\ref{sec:higher.differential}) we have canonical isomorphisms
\[
    \lD_{X^k \to  X} \cong \O_{X^k} \otimes_{\O_X} \lD_X \cong \lD_{X^k}.
\]
Similarly, recall that we have $\canon[X^k] \cong \O_{X^k} \otimes_{\O_X} \canon[X]$.
Thus the pushforward $i_{\spf}$ is well-defined and coincides with the $\O$-module pushforward.
In particular we have for any $\sheaf F \in  \catDDMod{X}$,
\[
    i_{\spf}i^* \sheaf F \cong \O_{X^k} \otimes_{\O_X} \sheaf F,
\]
where the latter has a natural structure of left $\lD_X$-module.

\begin{Rem}
    It follows from this that Kashiwara's equivalence for $\lD_X$-modules supported on a closed subvariety cannot hold for general log varieties.
    As an example, consider the log line $X = \as 1$ and let $i$ be the inclusion of the origin endowed with the induced idealized log structure.
    Then for example $i_\spf i^* (\O_X/x^n\O_X)$ is not quasi-isomorphic to $\O_X/x^n\O_X$.
    Moreover for $n > 0$ there cannot be any sheaf $\sheaf F$ whose pushforward is isomorphic to $\O_X/x^n\O_X$ even as $\O_X$-modules.
    The situation is even worse if $i$ is the inclusion of the origin with trivial log structure, as then $i^*$ will forget all information about the action of $x\frac{\partial}{\partial x}$ on skyscraper sheaves at the origin.
\end{Rem}

By Proposition \ref{prop:differential restriction} we have an identification of log cotangent spaces $T^*X^k = \res{T^*X}{X^k}=T^*X \times _{X} X^k\subseteq T^*X$.
For any subspace $Y$ of $T^*X$ we set $\res{Y}{X^k} = Y \cap  T^*X^k$.
Consider now the short exact sequence of right modules
\[
    0 \to \sheaf I\lD_X \to \lD_X \to \rquot{\lD_X}{\sheaf I\lD_X} \to 0.
\]
By the above, we know that $\rquot{\lD_X}{\sheaf I\lD_X} \cong \lD_{X^k}$, compatibly with the filtrations.
Thus the induced short exact sequence
\[
    0 \to \gr(\sheaf I\lD_X) \to \gr \lD_X \to \gr\lD_{X^k} \to 0.
\]
shows that $\gr(\sheaf I\lD_X)$ is the sheaf of ideals of $\O_{T^*X}$ defining $T^*X^k$.

\begin{Lem}\label{lem:ch_of_restriction_to_stratum}
    Let $\sheaf F$ be a coherent $\lD_X$-module and let $i\colon X^k \hookrightarrow X$ be the inclusion of a log stratum.
    Then, under the identification $T^*X^k = \res{T^*X}{X^k}$ we have
    \begin{enumerate}
        \item\label{lem:ch_of_restriction_to_stratum:i}
            $\Ch(H^0(i^*\sheaf F)) = \res{\Ch(\sheaf F)}{X^k}$;
        \item\label{lem:ch_of_restriction_to_stratum:ii}
            $\Ch(H^\ell(i^*\sheaf F)) \subseteq \res{\Ch(\sheaf F)}{X^k}$ for all $\ell \in  \mathbb{Z}$.
    \end{enumerate}
\end{Lem}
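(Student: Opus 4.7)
My approach follows Kashiwara's strategy for the classical analogue in \cite[Proposition~5.2]{Kashiwara:2004:tStructureOnHolonomicDModuleCoherentOModules}, using filtered free resolutions together with the key identification $\gr \lD_{X^k \to X} \cong \pi_* \O_{T^*X^k}$ established in the discussion just before the lemma. Working locally on $X$, I would pick a good filtration $F_\bullet \sheaf F$ together with a compatibly filtered free resolution $P_\bullet \to \sheaf F$ by finite sums of shifted copies of $\lD_X$, chosen so that $\gr P_\bullet \to \gr \sheaf F$ is an honest free resolution over $\gr \lD_X = \pi_* \O_{T^*X}$. Such \emph{strict} resolutions exist locally by standard arguments in filtered D-module theory, using the noetherianity of $\gr \lD_X$.

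For statement \ref{lem:ch_of_restriction_to_stratum:ii}, the filtered complex $Q_\bullet = \lD_{X^k \to X} \otimes_{\lD_X} P_\bullet$ represents $i^*\sheaf F$, and its associated graded complex $\pi_* \O_{T^*X^k} \otimes_{\gr \lD_X} \gr P_\bullet$ computes the Tor sheaves $\underline{\mathrm{Tor}}^{\gr \lD_X}_{-\ell}(\pi_* \O_{T^*X^k},\, \gr \sheaf F)$. These Tor sheaves are supported on $T^*X^k \cap \Ch(\sheaf F) = \res{\Ch(\sheaf F)}{X^k}$, since each factor is supported on the corresponding piece. The spectral sequence of the filtered complex $Q_\bullet$ then exhibits $\gr H^\ell(i^*\sheaf F)$ (for a suitable induced good filtration) as a subquotient of $H^{-\ell}(\gr Q_\bullet)$, which bounds its support by $\res{\Ch(\sheaf F)}{X^k}$.

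For statement \ref{lem:ch_of_restriction_to_stratum:i}, note that $H^0(i^*\sheaf F) = \sheaf F / \sheaf I \sheaf F$, where $\sheaf I \subset \O_X$ is the defining ideal of $X^k$, and the induced good filtration has associated graded a quotient of $\gr \sheaf F / \sheaf I \gr \sheaf F$. After arranging the good filtration on $\sheaf F$ to be $\sheaf I$-strict (i.e.\ $F_p \sheaf F \cap \sheaf I \sheaf F = \sheaf I F_p \sheaf F$ for all $p$, which can be achieved locally via an Artin--Rees-type argument), this quotient becomes an equality $\gr H^0(i^*\sheaf F) \cong \gr \sheaf F / \sheaf I \gr \sheaf F$. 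Its support is manifestly contained in $\res{\Ch(\sheaf F)}{X^k}$; for the reverse inclusion, observe that at every closed point $\xi$ of $T^*X^k$, the ideal $\sheaf I$ lies in the maximal ideal $\mathfrak{m}_\xi$ of $\O_{T^*X,\xi}$, so Nakayama's lemma yields $(\gr \sheaf F / \sheaf I \gr \sheaf F)_\xi \ne 0$ whenever $(\gr \sheaf F)_\xi \ne 0$.

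The main obstacle is the filtration-theoretic bookkeeping: constructing the filtered resolution $P_\bullet$ so that passage to $\gr$ preserves exactness rather than merely yielding a complex, and arranging the $\sheaf I$-strictness needed to promote the quotient in \ref{lem:ch_of_restriction_to_stratum:i} to an equality of associated gradeds. Both points are standard in filtered D-module theory but require careful local attention, especially since $\underline X$ may have singularities along the boundary strata.
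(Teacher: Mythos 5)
Your part (ii) is correct but takes a genuinely different route from the paper: you resolve $\sheaf F$ by filtered free $\lD_X$-modules and bound $\supp \gr H^\ell(i^*\sheaf F)$ by the support of $\underline{\mathrm{Tor}}^{\gr\lD_X}_{-\ell}(\O_{T^*X^k}, \gr\sheaf F)$ via the spectral sequence of the filtered complex, whereas the paper resolves $\O_{X^k}$ by locally free $\O_X$-modules and then observes that each $H^\ell(\O_{X^k}\otimes_{\O_X}\sheaf F)$ is locally a subquotient of a direct sum of copies of $\sheaf F$, so that monotonicity of $\Ch$ under subquotients gives $\Ch(H^\ell(i^*\sheaf F))\subseteq\Ch(\sheaf F)$ (intersecting with $T^*X^k$ then comes for free since these cohomologies are supported on $X^k$). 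Your version is the classical Bj\"ork-style filtered-Tor argument; it is heavier on filtration bookkeeping (strict filtered resolutions, convergence of the spectral sequence --- note the resolution may be infinite since $\lD_X$ need not have finite global dimension, so truncate for each fixed $\ell$) but lands in $\res{\Ch(\sheaf F)}{X^k}$ directly. Either works.

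In part (i), however, there is a genuine gap at exactly the point where the logarithmic hypothesis must enter: your claim that an $\sheaf I$-strict good filtration ``can be achieved locally via an Artin--Rees-type argument.'' Artin--Rees (noetherianity of the Rees algebra of $\lD_X$) only gives that the induced filtration on the submodule $\sheaf I\sheaf F$ is again good; it cannot give $F_p\sheaf F\cap\sheaf I\sheaf F=\sheaf I F_p\sheaf F$, because that statement is \emph{false} for a general ideal $\sheaf I$ --- and the paper explicitly warns that this lemma fails for arbitrary closed subschemes. Concretely, on the ordinary (non-log) affine line take $M=\lD/\lD x\cong\CC[\partial]\delta_0$ and $\sheaf I=(x)$: then $xM=M$, so $\sheaf I$-strictness would force $F_p=xF_p$ for all $p$, which Nakayama at the origin rules out for any nonzero good filtration; correspondingly $\gr(M/xM)=0$ while $\gr M\otimes\O/(x)\neq 0$. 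Any argument blind to the log structure would ``prove'' the false general statement. What makes log strata special is that the log vector fields preserve $\sheaf I$ (\cite[Lemma~3.3.4(4)]{AbramovichTemkinWlodarczyk:arXiv:PrincipalizationOfIdealsOnToroidalOrbifolds}), so $\lD_X\sheaf I\subseteq\sheaf I\lD_X$, the sequence $0\to\sheaf I\lD_X\to\lD_X\to\lD_{X^k}\to 0$ is strict for the order filtrations, and $\gr(\sheaf I\lD_X)=\sheaf I\O_{T^*X}$ is the full ideal of $T^*X^k$. This is how the paper proceeds: taking induced filtrations in $0\to\sheaf I\sheaf F\to\sheaf F\to\sheaf F/\sheaf I\sheaf F\to 0$ yields an exact sequence of graded modules with $\gr(\sheaf I\sheaf F)\cong\gr(\sheaf I\lD_X)\gr\sheaf F=\sheaf I\gr\sheaf F$, hence $\gr(\sheaf F/\sheaf I\sheaf F)\cong\gr\sheaf F\otimes_{\O_{T^*X}}\O_{T^*X^k}$ for \emph{any} good filtration --- the strictness you wanted to arrange is then automatic rather than something to impose, and your Nakayama step for the reverse inclusion goes through unchanged once this identification is in place. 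So replace the Artin--Rees appeal with the $\lD_X$-stability of $\sheaf I$; as written, that step would fail.
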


\begin{proof}
    Let $\sheaf I$ be the sheaf of ideals defining $X^k$.
    As noted above, $\sheaf I$ is a coherent $\lD_X$-module and hence the same is true for $\sheaf I\sheaf F$.
    Consider now the short exact sequence of $\lD_X$-modules
    \[
        0 \to  \sheaf I\sheaf F \to  \sheaf F \to  \sheaf F/\sheaf I\sheaf F \to  0.
    \]
    The module $\sheaf I\sheaf F$ is the same as $(\sheaf I\lD_X)\sheaf F$, where $(\sheaf I\lD_X)$ is a sheaf of (right) ideals of $\lD_X$.
    We have $\gr(\sheaf I\sheaf F) \cong \gr(\sheaf I\lD_X)\gr\sheaf F$.
    Thus we get a short exact sequence
    \[
        0 \to  \gr(\sheaf I\lD_X)\gr\sheaf F \to  \gr \sheaf F \to  \gr(\sheaf F/\sheaf I\sheaf F) \to  0,
    \]
    and hence an isomorphism
    \[
        \gr(\sheaf F/\sheaf I\sheaf F) \cong \rquot{\gr\sheaf F}{\gr(\sheaf I\lD_X)\gr\sheaf F} \cong \gr \sheaf F \otimes_{\O_{T^*X}} \O_{T^*X^k},
    \]
    showing the first statement.

    For the second statement it now suffices to show that the characteristic varieties of $H^i(\sheaf F \otimes_{\O_X} \O_{X^k})$ are contained in $\Ch(\sheaf F)$.
    Since the statement is local, we can assume that $X$ is affine and hence has the resolution property (i.e. every coherent sheaf admits a surjection from a vector bundle).
    Thus we can compute the tensor product via a locally free resolution of $\O_{X^k}$, which implies that $H^i(\sheaf F \otimes_{\O_X} \O_{X^k})$ is locally a subquotient of a direct sum of copies of $\sheaf F$.
    The statement follows. 
\end{proof}

It is well known that for a coherent $\O_X$-module $\sheaf F$ with $\supp(\sheaf F) = Z \cup  Z'$ with $Z,Z'$ closed there exists a short exact sequence of coherent sheaves $0\to \sheaf G \to  \sheaf F \to  \sheaf G' \to  0$ with $\supp(\sheaf G) \subseteq  Z$ and $\supp \sheaf G' \subseteq  Z'$ \stackcite{01YC}.
The following statement is an adaptation of this to $\lD_X$-modules, but only when one of the supports is a log stratum.

\begin{Lem}\label{lem:ses_by_support}
    Let $\sheaf F \in  \catDModcoh{X}$ and assume that there exists a closed subscheme $Z$ of $T^*X$ such that $\Ch(\sheaf F)$ is contained in $\res{T^*X}{X^k} \cup  Z$ for some $k$.
    Then there exist a short exact sequence of coherent $\lD_X$-modules
    \[
        0 \to  \sheaf G \to  \sheaf F \to  \sheaf G' \to  0
    \]
    with $\Ch(\sheaf G') \subseteq  \res{T^*X}{X^k}$ and $\Ch(\sheaf G) \subseteq  Z$.
\end{Lem}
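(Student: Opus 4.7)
I will take $\sheaf G := \sheaf I^N\sheaf F$ for sufficiently large $N$, where $\sheaf I \subseteq \O_X$ denotes the defining ideal of the log stratum $X^k$. As recalled at the beginning of this subsection, $\sheaf I$ carries a canonical $\lD_X$-module structure, so in particular $\cT_X$ preserves $\sheaf I$ and, by the Leibniz rule, every power $\sheaf I^N$. Consequently $\sheaf I^N\sheaf F$ is a coherent sub-$\lD_X$-module of $\sheaf F$, and the quotient $\sheaf F/\sheaf I^N\sheaf F$ is annihilated by $\sheaf I^N$; thus $\sheaf F/\sheaf I^N\sheaf F$ is $\O_X$-supported on $X^k$, which immediately gives $\Ch(\sheaf F/\sheaf I^N\sheaf F) \subseteq \pi^{-1}(X^k) = \res{T^*X}{X^k}$ for every $N$. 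Setting $\sheaf G' := \sheaf F/\sheaf G$ therefore yields the support condition for $\sheaf G'$, and the remaining task is to produce $N$ with $\Ch(\sheaf I^N\sheaf F) \subseteq Z$.

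For this I pass to associated gradeds. Fix a good filtration $F_\bullet\sheaf F$ and write $M := \gr^F\sheaf F$; this is a coherent $\O_{T^*X}$-module whose support equals $\Ch(\sheaf F) \subseteq V(\widetilde\sheaf I)\cup Z$, where $\widetilde\sheaf I := \pi^*\sheaf I$ cuts out $\res{T^*X}{X^k}$ in $T^*X$ and $\sheaf J_Z \subseteq \O_{T^*X}$ denotes the ideal of $Z$. Since $\supp M \subseteq V(\widetilde\sheaf I\cdot \sheaf J_Z)$, a standard Nullstellensatz argument applied affine-locally in the Noetherian scheme $T^*X$ yields that $(\widetilde\sheaf I\cdot\sheaf J_Z)^N$ annihilates $M$ for $N$ large, equivalently $\sheaf J_Z^N$ annihilates $\widetilde\sheaf I^N M$, so that $\supp(\widetilde\sheaf I^N M) \subseteq Z$ for all such $N$.

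To transfer this estimate to $\sheaf I^N\sheaf F$ itself, I equip $\sheaf I^N\sheaf F$ with the induced filtration $G_m := F_m\sheaf F\cap \sheaf I^N\sheaf F$, which is good by the usual argument that induced filtrations on coherent sub-$\lD$-modules are good. Then $\gr^G(\sheaf I^N\sheaf F)$ embeds as a graded $\gr\lD_X$-submodule of $M$. The filtered Artin--Rees lemma, applied in the Noetherian Rees algebra of $\lD_X$ with respect to the two-sided ideal generated by $\sheaf I$, produces a constant $c$ (independent of $N$) such that
\[
    F_m\sheaf F \cap \sheaf I^N\sheaf F \subseteq \sheaf I^{N-c}F_m\sheaf F + F_{m-1}\sheaf F \qquad \text{for all } m \text{ and } N\geq c.
\]
Passing to associated gradeds shows that the image of $\gr^G(\sheaf I^N\sheaf F)$ in $M$ is contained in $\widetilde\sheaf I^{N-c}M$, so that $\Ch(\sheaf I^N\sheaf F) \subseteq \supp(\widetilde\sheaf I^{N-c}M) \subseteq Z$ for $N$ sufficiently large. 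Choosing any such $N$ and setting $\sheaf G := \sheaf I^N\sheaf F$, $\sheaf G' := \sheaf F/\sheaf G$ produces the required short exact sequence. The main technical point will be the filtered Artin--Rees step; the remaining ingredients are commutative-algebraic facts about $\O_{T^*X}$-modules and the stability of $\sheaf I$ under $\cT_X$.
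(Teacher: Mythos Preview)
Your approach is essentially the paper's: take $\sheaf G = \sheaf I^N\sheaf F$ and $\sheaf G' = \sheaf F/\sheaf I^N\sheaf F$ for $N\gg 0$, then control $\Ch(\sheaf I^N\sheaf F)$ via associated gradeds. The only difference is in the bookkeeping for that last step. The paper does not go through a filtered Artin--Rees argument; instead it invokes the computation already carried out in the proof of Lemma~\ref{lem:ch_of_restriction_to_stratum} to identify $\gr(\sheaf I^n\sheaf F)$ (for the induced filtration) with $(\sheaf I^n\O_{T^*X})\gr\sheaf F$ exactly, and then observes that $\res{(\sheaf I^n\O_{T^*X})\gr\sheaf F}{T^*X\setminus Z}$ vanishes for $n\gg0$ because $\res{\gr\sheaf F}{T^*X\setminus Z}$ is set-theoretically supported on $\res{T^*X}{X^k}$. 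Your route via a uniform-in-$m$ Artin--Rees statement reaches the same conclusion, but the precise assertion $F_m\sheaf F\cap\sheaf I^N\sheaf F\subseteq\sheaf I^{N-c}F_m\sheaf F+F_{m-1}\sheaf F$ with $c$ independent of $m$ is not a standard formulation and would require you to set up the Rees-algebra argument carefully; the paper's direct identification is shorter and sidesteps this.
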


\begin{proof}
    All functors in this proof will be between the abelian categories, i.e.~underived.
    Let $\sheaf I \subseteq \O_X$ be the sheaf of ideals defining the reduced subscheme structure on $X^k$.
    Set $\sheaf G'_n = (\O_X/\sheaf I^n) \otimes_{\O_X} \sheaf F = (\lD_X/\sheaf I^n\lD_X) \otimes_{\lD_X} \sheaf F$ and $\sheaf G_n = \sheaf I^n \sheaf F = \ker (\sheaf F \to  \sheaf G'_n)$.
    That is, for each $n$ we have a short exact sequence
    \[
        0 \to  \sheaf G_n \to  \sheaf F \to  \sheaf G_n' \to  0.
    \]
    Clearly $\supp(\sheaf G_n') \subseteq  X^k$ and hence $\Ch(\sheaf G_n') \subseteq  \res{T^*X}{X^k}$.
    We have to show that $\Ch(\sheaf G_n) \subseteq  Z$ for $n \gg 0$ or equivalently that $\res{\gr \sheaf G_n}{T^*X \setminus Z} = 0$.

    By the proof of Lemma~\ref{lem:ch_of_restriction_to_stratum}, we have
    \[
        \gr \sheaf G'_n \cong (\O_{T^*X}/\sheaf I^n\O_{T^*X}) \otimes_{\O_{T^*X}} \gr \sheaf F.
    \]
    Thus the short exact sequence
    \[
        0 \to  \gr\sheaf G_n \to  \gr\sheaf F \to  \gr\sheaf G_n' \to  0
    \]
    shows that $\gr\sheaf G_n \cong (\sheaf I^n\O_{T^*X})\gr\sheaf F$.
    Thus also $\res{\gr\sheaf G_n}{T^*X \setminus Z} \cong \res{(\sheaf I^n\O_{T^*X})\gr\sheaf F}{T^*X \setminus Z}$.
    As $\res{\gr\sheaf F}{T^*X \setminus Z}$ is supported on $\res{T^*X}{X^k}$, $\res{(\sheaf I^n\O_{T^*X})\gr\sheaf F}{T^*X \setminus Z}$ has to vanish for $n \gg 0$ \stackcite{01Y9}.
    In other words, $\supp\gr\sheaf G_n \subseteq  Z$ for some $n$, as required.
\end{proof}

The following statements are adapted from similar statements in \cite[Section~5]{Kashiwara:2004:tStructureOnHolonomicDModuleCoherentOModules}, specifically from Proposition~5.2, Lemma~5.3 and Proposition~5.4.
We want to emphasize that they are only true for restrictions to the strata $X^k$.
The analogous statements for arbitrary closed subschemes fail.

\begin{Lem}\label{lem:lemma_for_Xk_local_cohomology_and_support}
  Let $\sheaf F \in  \catDcohDMod[-]{X}$.
  Then under the identification $T^*X^k \hookrightarrow T^*X$ we have an equality of characteristic varieties
  \[
    \res{\Ch(\taustand_{\ge\ell} \sheaf F)}{X^k} =
    \Ch\bigl(\taustand_{\ge\ell}(\O_X/\sheaf I \otimes_{\O_X} \sheaf F)\bigr),
  \]
 for every $\ell \in \mathbb{Z}$, where $\sheaf I$ is the sheaf of ideals defining the reduced subscheme structure on $X^k$.

\end{Lem}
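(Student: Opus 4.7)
\emph{Reduction to the bounded case and base of the induction.} The plan is to induct on the cohomological amplitude of $\sheaf F$, after first reducing to the bounded situation. Given $\sheaf F \in \catDcohDMod[-]{X}$, fix any integer $m \le \ell$ and consider the distinguished triangle
\[
    \taustand_{<m}\sheaf F \to \sheaf F \to \taustand_{\ge m}\sheaf F.
\]
Tensoring with $\O_X/\sheaf I$, the first term still has cohomology concentrated in degrees $< m$ (by the standard spectral sequence computing the derived tensor product, since Tor with $\O_X/\sheaf I$ only contributes in non-positive degrees), so the resulting long exact sequence shows that both sides of the asserted equality are unchanged when $\sheaf F$ is replaced by $\taustand_{\ge m}\sheaf F$. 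We may therefore assume $\sheaf F \in \catDbcohDMod{X}$ and induct on the number of nonzero cohomology sheaves. In the base case $\sheaf F = \sheaf M[-p]$ for some coherent $\lD_X$-module $\sheaf M$: if $\ell > p$ both sides vanish, and otherwise Lemma~\ref{lem:ch_of_restriction_to_stratum} shows that the top cohomology of $\O_X/\sheaf I \otimes_{\O_X} \sheaf M[-p]$ has characteristic variety exactly $\Ch(\sheaf M) \cap T^*X^k$, while every lower cohomology has characteristic variety contained in this set.

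\emph{Inductive step, easy direction.} Let $p$ be the smallest integer with $H^p(\sheaf F) \ne 0$, set $\sheaf F' = \taustand_{>p}\sheaf F$, and write $A \to B \to C$ for the triangle obtained from $H^p(\sheaf F)[-p] \to \sheaf F \to \sheaf F'$ by tensoring with $\O_X/\sheaf I$. For $\ell > p$ the module $A$ has cohomology only in degrees $\le p$, hence $H^n(A) = H^{n+1}(A) = 0$ for $n \ge \ell$ and the long exact sequence yields $H^n(B) \cong H^n(C)$ in that range; both sides then reduce to the corresponding statement for $\sheaf F'$, to which the induction hypothesis applies. For $\ell \le p$, the inclusion $\supseteq$ follows from $\Ch(H^n(B)) \subseteq \Ch(H^n(A)) \cup \Ch(H^n(C))$ combined with Lemma~\ref{lem:ch_of_restriction_to_stratum} and the inductive hypothesis applied to $\sheaf F'$.

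\emph{Main obstacle.} The crux is the inclusion $\subseteq$ in the case $\ell \le p$, where we must exhibit $\Ch(H^p(\sheaf F)) \cap T^*X^k$ inside $\bigcup_{n \ge \ell}\Ch(H^n(B))$. The long exact sequence produces an injection $H^p(A)/\sheaf N \hookrightarrow H^p(B)$, where $\sheaf N$ is the image of $H^{p-1}(C) \to H^p(A)$. By Lemma~\ref{lem:ch_of_restriction_to_stratum} we have $\Ch(H^p(A)) = \Ch(H^p(\sheaf F)) \cap T^*X^k$, while the spectral sequence $E_2^{a,b} = \operatorname{Tor}_{-a}^{\O_X}(\O_X/\sheaf I, H^b(\sheaf F')) \Rightarrow H^{a+b}(C)$ combined with the same lemma bounds
\[
    \Ch(\sheaf N) \subseteq \Ch(H^{p-1}(C)) \subseteq \bigcup_{b > p}\Ch(H^b(\sheaf F)) \cap T^*X^k.
\]
Consequently $\Ch(H^p(B))$ captures all of $\Ch(H^p(\sheaf F)) \cap T^*X^k$ modulo the characteristic varieties of strictly higher cohomologies of $\sheaf F$. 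Those remaining contributions are already absorbed into $\bigcup_{n > p}\Ch(H^n(B))$: the vanishing $H^{n+1}(A) = 0$ for $n > p$ makes $H^n(B) \to H^n(C)$ surjective, so $\Ch(H^n(C)) \subseteq \Ch(H^n(B))$, and applying the inductive hypothesis to $\sheaf F'$ identifies $\bigcup_{n > p}\Ch(H^n(C))$ with $\bigcup_{b > p}\Ch(H^b(\sheaf F)) \cap T^*X^k$. Taking the union of these two contributions recovers the full left-hand side. The delicate bookkeeping required to ensure that $\sheaf N$ does not swallow precisely the part of $\Ch(H^p(A))$ that is not already picked up in higher degrees is the main technical point of the argument.
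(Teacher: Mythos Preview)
Your argument is correct, but it is considerably more involved than the paper's. Both proofs dispatch the inclusion $\supseteq$ quickly via Lemma~\ref{lem:ch_of_restriction_to_stratum}\ref{lem:ch_of_restriction_to_stratum:ii} (the paper simply says ``by induction''; your version unpacks this through the triangle for the lowest cohomology and the inductive hypothesis on $\sheaf F'$). The real divergence is in the hard direction $\subseteq$. You induct on cohomological amplitude, peel off the bottom cohomology $H^p(\sheaf F)[-p]$, and then carefully track how much of $\Ch(H^p(\sheaf F))\cap T^*X^k$ could be lost to the kernel $\sheaf N$ coming from $H^{p-1}(C)$, recovering that piece from the higher degrees via the surjections $H^n(B)\twoheadrightarrow H^n(C)$ for $n>p$. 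This works, but the bookkeeping is exactly the ``delicate'' part you flag.

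The paper avoids the induction entirely by arguing pointwise: given $x\in\res{\Ch(\tau_{\ge\ell}\sheaf F)}{X^k}$, take the \emph{largest} $j$ with $x\in\Ch(H^j(\sheaf F))$; then $x\in\Ch(H^0(i^*H^j(\sheaf F)))$ by Lemma~\ref{lem:ch_of_restriction_to_stratum}\ref{lem:ch_of_restriction_to_stratum:i}, and in the spectral sequence $E_2^{-p,q}=H^{-p}(i^*H^q(\sheaf F))\Rightarrow H^{-p+q}(i^*\sheaf F)$ all incoming differentials into $E_r^{0,j}$ originate from terms with $q>j$, which by maximality of $j$ cannot support $x$. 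Hence $x$ survives to $\Ch(H^j(i^*\sheaf F))$. This one-line spectral sequence observation replaces your entire third paragraph. The trick of choosing the \emph{top} index where $x$ appears, rather than inducting from the bottom, is what makes the potential cancellation issue disappear: there is nothing above $j$ that could kill the contribution.
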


\begin{proof}
  The inclusion $\Ch\bigl(\taustand_{\ge\ell}(\O_X/\sheaf I \otimes_{\O_X} \sheaf F)\bigr) \subseteq \res{\Ch(\taustand_{\ge\ell} \sheaf F)}{X^k}$ follows immediately by induction from Lemma~\ref{lem:ch_of_restriction_to_stratum}\ref{lem:ch_of_restriction_to_stratum:ii}.
  
  For the other inclusion fix any point $x \in  \res{\Ch(\taustand_{\ge \ell} \sheaf F)}{X^k}$ and let $j$ be the largest integer such that $x \in  \Ch(H^j(\sheaf F))$.
  Let $i\colon X^k\hookrightarrow X$ be the inclusion.
  By Lemma~\ref{lem:ch_of_restriction_to_stratum} we see that $x \in \Ch(H^0(i^* H^j(\sheaf F)))$.
  Hence from the spectral sequence
  \[
    H^{-p}(i^*H^q(\sheaf F)) \Rightarrow H^{p+q}(i^*\sheaf F)
  \]
  and the maximality of $j$ it follows that
  \[
    x \in \Ch(H^j(i^* \sheaf F)) =
    \Ch(H^j(\O_X/\sheaf I \otimes_{\O_X} \sheaf F)) \subseteq 
    \Ch(\taustand_{\ge \ell}(\O_X/\sheaf I \otimes_{\O_X} \sheaf F))
  \]
  as required.
\end{proof}

We are now ready to state and prove the main proposition of this section.

\begin{Prop}\label{prop:Xk_local_cohomology_and_support}
    Let $\sheaf F \in  \catDbcohDMod{X}$ and let $k$ and $n$ be integers.
    Then $\Gamma _{X^k}(\sheaf F) \in  \catDQCoh[\ge n]{\O_X}$ if and only if
    \[
    \dim\res{\Ch\bigl(H^\ell(\DVerdier_X\sheaf F)\bigr)}{X^k} \le \logdim X - n - \ell
    \]
    for all $\ell\in \mathbb{Z}$.
\end{Prop}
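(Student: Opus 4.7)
The plan is to establish the proposition by a chain of equivalences, combining the characteristic variety criterion of Corollary~\ref{cor:nc_dual_vanishing_and_support_derived} with the comparison of Lemma~\ref{lem:lemma_for_Xk_local_cohomology_and_support}. Throughout I would set $\sheaf G = \DVerdier_X \sheaf F$ and let $\sheaf I$ denote the coherent $\lD_X$-ideal defining $X^k$, so that each $\O_X/\sheaf I^m$ is a coherent $\lD_X$-module (since $\cT_X$ preserves $\sheaf I$ and hence, by the Leibniz rule, every power $\sheaf I^m$).

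First I would show that $\RR\Gamma_{X^k}(\sheaf F) \in \catDQCoh[\ge n]{\O_X}$ if and only if $\RR\sheafHom_{\O_X}(\O_X/\sheaf I^m, \sheaf F) \in \catDQCoh[\ge n]{\O_X}$ for every $m \ge 1$. The ``if'' direction is immediate from the identity $\RR\Gamma_{X^k}(\sheaf F) = \colim_m \RR\sheafHom_{\O_X}(\O_X/\sheaf I^m, \sheaf F)$ and the exactness of filtered colimits. For the converse, let $j\colon X \setminus X^k \hookrightarrow X$; since $j^*(\O_X/\sheaf I^m) = 0$, applying $\RR\sheafHom_{\O_X}(\O_X/\sheaf I^m,-)$ to the triangle $\RR\Gamma_{X^k}\sheaf F \to \sheaf F \to \RR j_* j^*\sheaf F$ and using the adjunction $\RR\sheafHom_{\O_X}(\O_X/\sheaf I^m,\, \RR j_* (-)) = \RR j_* \RR\sheafHom_{\O_U}(j^*(\O_X/\sheaf I^m), -) = 0$ yields the isomorphism $\RR\sheafHom_{\O_X}(\O_X/\sheaf I^m, \sheaf F) \cong \RR\sheafHom_{\O_X}(\O_X/\sheaf I^m, \RR\Gamma_{X^k}\sheaf F)$. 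Since $\RR\sheafHom_{\O_X}(-,K)$ preserves the half-plane $D^{\ge n}$ in its second argument (computed via an injective resolution of $K$ concentrated in degrees $\ge n$), the conclusion follows.

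Next, Lemma~\ref{lem:dual_of_hom} rewrites $\RR\sheafHom_{\O_X}(\O_X/\sheaf I^m, \sheaf F)$ as $\DVerdier_X(\O_X/\sheaf I^m \otimes_{\O_X} \sheaf G)$. Corollary~\ref{cor:nc_dual_vanishing_and_support_derived}, applied after accounting for the shift $-\logdim X$ and the line bundle twist in the definition of $\DVerdier_X$ (neither of which alters the dimension of any characteristic variety), gives that this complex lies in $D^{\ge n}$ if and only if $\dim \Ch(H^\ell(\O_X/\sheaf I^m \otimes_{\O_X} \sheaf G)) \le \logdim X - n - \ell$ for every $\ell$. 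For $m=1$ the equality $\Ch(\taustand_{\ge\ell}(\O_X/\sheaf I \otimes_{\O_X} \sheaf G)) = \res{\Ch(\taustand_{\ge\ell}\sheaf G)}{X^k}$ of Lemma~\ref{lem:lemma_for_Xk_local_cohomology_and_support} translates this into the asserted bound $\dim \res{\Ch(H^\ell \sheaf G)}{X^k} \le \logdim X - n - \ell$. For general $m$ a $\mathrm{Tor}$ spectral sequence argument using a locally free resolution of $\O_X/\sheaf I^m$ yields the inclusion $\Ch(H^\ell(\O_X/\sheaf I^m \otimes_{\O_X} \sheaf G)) \subseteq \res{\Ch(\taustand_{\ge\ell}\sheaf G)}{X^k}$, uniformly in $m$, so that the bound at level $m$ is a consequence of the bound at level $1$, which closes the chain of equivalences.

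I expect the most delicate point to be the reduction in the second paragraph: under the hypothesis $\RR\Gamma_{X^k}(\sheaf F) \in D^{\ge n}$, showing that \emph{every} $\RR\sheafHom_{\O_X}(\O_X/\sheaf I^m, \sheaf F)$ lies in $D^{\ge n}$. The transition maps in the filtered colimit defining $\RR\Gamma_{X^k}(\sheaf F)$ need not be injective on cohomology, so one cannot argue term by term; the adjunction identification above circumvents this by exploiting that $\RR\sheafHom_{\O_X}(\O_X/\sheaf I^m, -)$ is insensitive to the part of $\sheaf F$ supported away from $X^k$.
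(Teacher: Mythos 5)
Your proof is correct and its core chain is the same as the paper's: rewrite the local-cohomology condition as a condition on $\sheafHom_{\O_X}(\O_X/\sheaf I,\sheaf F)$, convert via Lemma~\ref{lem:dual_of_hom} into $\DVerdier_X(\O_X/\sheaf I\otimes_{\O_X}\DVerdier_X\sheaf F)$, apply the noncommutative duality criterion, and conclude with Lemma~\ref{lem:lemma_for_Xk_local_cohomology_and_support}. Where you genuinely diverge is the first reduction. The paper disposes of it in one line by citing \cite[Proposition~VII.1.2]{SGA2}, which says that $\Gamma_{X^k}(\sheaf F)\in\catDQCoh[\ge n]{\O_X}$ is detected by $\sheafHom_{\O_X}(\O_X/\sheaf I,\sheaf F)$ for the \emph{single} reduced ideal $\sheaf I$ -- the d\'evissage over powers of $\sheaf I$ is absorbed into that citation. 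You instead reprove this reduction from scratch: the colimit description $\Gamma_{X^k}(\sheaf F)=\operatorname{colim}_m\sheafHom_{\O_X}(\O_X/\sheaf I^m,\sheaf F)$ handles one direction, and your triangle/adjunction argument with $\RR j_*j^*\sheaf F$ handles the other (and you correctly identify why a naive term-by-term colimit argument fails there). The price is that you must control characteristic varieties for \emph{all} powers $\O_X/\sheaf I^m$, which you do via the observation that $\cT_X$ preserves $\sheaf I^m$ (so Lemma~\ref{lem:dual_of_hom} applies) together with the hyper-Tor spectral sequence inclusion $\Ch(H^\ell(\O_X/\sheaf I^m\otimes_{\O_X}\sheaf G))\subseteq\res{\Ch(\taustand_{\ge\ell}\sheaf G)}{X^k}$, reducing the level-$m$ bound to the level-$1$ equality of Lemma~\ref{lem:lemma_for_Xk_local_cohomology_and_support}. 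This is sound and makes the proof self-contained at the cost of some extra bookkeeping; the paper's route is shorter but imports the d\'evissage as a black box.

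One technical slip to fix: you invoke Corollary~\ref{cor:nc_dual_vanishing_and_support_derived}, whose hypothesis requires the complex $\O_X/\sheaf I^m\otimes_{\O_X}\sheaf G$ to lie in $\catDbCoh{\lD_X}$. Since $\underline{X}$ is only log smooth and may be classically singular, $\O_X/\sheaf I^m$ can have infinite Tor-dimension over $\O_X$, so this derived tensor product need not be bounded below; it only lies in $\cat D^-$, and its dual in $\cat D^+$. This is precisely why the paper proves and uses Corollary~\ref{cor:nc_dual_vanishing_and_support_unbounded} at this step. Replacing your citation by that corollary (applied to $\DVerdier_X(\O_X/\sheaf I^m\otimes_{\O_X}\sheaf G)\in\catDPlusCoh{\lD_X}$, using biduality) repairs the argument with no other changes; your shift bookkeeping ($-\logdim X$ and the $\omega_X^\vee$-twist not affecting dimensions of characteristic varieties) is correct as stated.
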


\begin{proof}
  By \cite[Proposition~\RomanNum{VII}.1.2]{SGA2}, $\Gamma _{X^k}(\sheaf F) \in  \catDQCoh[\ge n]{\O_X}$ if and only if
  \begin{equation*}
    \sheafHom_{\O_X}(\O_X/\sheaf I,\, \sheaf F) \in  \cat{D}^{\ge n}(\kk_X),
  \end{equation*}
  where $\sheaf I$ is the sheaf of ideals defining $X^k$ with the reduced subscheme structure.
  We note that by Lemma~\ref{lem:dual_of_hom} we have an isomorphism of $\kk_X$-modules
  \[
    \sheafHom_{\O_X}(\O_X/\sheaf I,\, \sheaf F) \cong
    \DVerdier_X\bigl(\O_X/\sheaf I \otimes_{\O_X} \DVerdier_X \sheaf F\bigr).
  \]
  Hence it suffices to show that the latter is contained in $\cat{D}^{\ge n}(\kk_X)$.
  By Corollary~\ref{cor:nc_dual_vanishing_and_support_unbounded} this is equivalent to
  \[
    \dim \Ch\biggl(H^\ell\bigl(\O_X/\sheaf I \otimes_{\O_X} \DVerdier_X \sheaf F\bigr)\biggr)
    \le
    \logdim X - n - \ell
  \]
  for all $\ell \in  \mathbb{Z}$.
  Thus the statement follows from Lemma~\ref{lem:lemma_for_Xk_local_cohomology_and_support}.
\end{proof}

\subsection{Holonomic log D-modules}\label{sec:holonomics}

In the classical case the Bernstein inequality says that $\dim \Ch(\sheaf F) \geq  \dim X$.
However this is no longer true in the logarithmic setting.
For example, the characteristic variety of the skyscraper $\mathbb{C}$ at the origin of the log line $\mathbb{A}^1$ (say with $x\frac{\partial }{\partial x}$ acting by $0$) is just the origin, and hence $0$-dimensional.
As the following proposition shows, the reason is that we should account for the logarithmic structure and use the log dimension to measure characteristic varieties.

Recall that we regard $T^*X$ as a log scheme, equipped with the pullback of the log structure of $X$ via the projection $\pi\colon T^*X\to X$.

\begin{Thm}[Logarithmic Bernstein inequality]\label{thm:log_bernstein}
    Let $\sheaf F$ be a coherent $\lD_X$-module.
    Then if $Z$ is any irreducible component of $\Ch(\sheaf F)$,
    \[
        \logdim Z \geq  \logdim X.
    \]
\end{Thm}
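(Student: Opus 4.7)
The plan is to adapt the classical proof of Bernstein's inequality via Gabber's involutivity theorem to the logarithmic setting, and combine it with a local computation of the log Poisson structure on $T^*X$.

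First, I would establish a logarithmic version of Gabber's involutivity theorem: the characteristic variety $\Ch(\sheaf F)$ is coisotropic (involutive) in $T^*X$ with respect to the natural logarithmic Poisson structure on $\O_{T^*X}$. The sheaf $\lD_X$ equipped with the order filtration is an almost commutative filtered algebra in the sense of Gabber: the associated graded $\gr\lD_X \cong \pi_*\O_{T^*X}$ is a commutative, Noetherian, finite-type $\O_X$-algebra, and the commutator on $\lD_X$ induces a Poisson bracket on $\gr\lD_X$. Hence Gabber's classical involutivity theorem in its almost commutative formulation should apply.

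Next, I would locally analyze the Poisson structure. In the \'etale-local logarithmic coordinates of Corollary \ref{cor:local.description.differentials.idealized}, near a point lying over $S^m = X^m \setminus X^{m+1}$, the Poisson bivector takes the form
\[
    \sum_i \partial_{y_i} \wedge \partial_{\eta_i} + \sum_j x_j \partial_{x_j} \wedge \partial_{\xi_j},
\]
where $(y_i, \eta_i)$ are classical and $(x_j, \xi_j)$, $j=1,\dots,m$, are log-toric cotangent coordinates. At a point with $x_j = 0$ for all $j$, the bivector has rank $2(\dim X - m)$. Since any coisotropic subvariety of a Poisson variety has dimension at least half the Poisson rank at a generic smooth point (a standard linear-algebra fact about the conormal space), we conclude that any irreducible component $Z$ of $\Ch(\sheaf F)$ whose generic point lies over $S^m$ satisfies $\dim Z \geq \dim X - m$.

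To finish, since $\pi(Z)$ generically lies in $S^m$ but not in $X^{m+1}$, we have $Z \cap (T^*X)^k = Z$ for $k \leq m$ and $\dim\bigl(Z \cap (T^*X)^k\bigr) < \dim Z$ for $k > m$, so the maximum in the definition of $\logdim Z$ is attained at $k = m$. Thus, letting $r$ be the generic rank of $X$,
\[
    \logdim Z = \dim Z + m + r \geq (\dim X - m) + m + r = \dim X + r = \logdim X.
\]
The main obstacle is the logarithmic Gabber theorem. It should follow formally from Gabber's theorem for almost commutative filtered rings, but one would want to verify the hypotheses carefully, particularly in the idealized case where the underlying scheme of $X$ need not be classically smooth. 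The local Poisson computation is then a straightforward consequence of Corollary \ref{cor:local.description.differentials.idealized}.
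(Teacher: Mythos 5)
Your proof is essentially correct, but it takes a genuinely different route from the paper. The paper never invokes involutivity: instead it first reduces to the deepest log stratum containing $Z$, using the fact that characteristic varieties restrict well to strata (Lemma~\ref{lem:ch_of_restriction_to_stratum}\ref{lem:ch_of_restriction_to_stratum:i}, which rests on $\lD_{X^k} \cong \O_{X^k} \otimes_{\O_X} \lD_X$), so that one may assume $Z$ meets the open stratum; there the underlying scheme is classically smooth and locally $\lD_X \cong \lD_{\underline{X}}[\partial_{m_1},\dotsc,\partial_{m_d}]$ with the extra generators central, whence Bj\"ork's homological bound $\dim Z \ge \gldim \gr\lD_X - \gldim\lD_X = \dim X$ gives the inequality after adding back $d = \logdim X - \dim X$. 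Your argument replaces this reduction-plus-homological-algebra by Gabber involutivity plus a pointwise Poisson-rank count. The trade-off: the paper's route needs no involutivity theorem and sidesteps all singular points of $T^*X$ by construction, while yours, once the log Gabber theorem is in hand (and it does follow formally from Gabber's theorem for filtered rings whose associated graded is commutative Noetherian over $\QQ$-algebras, applied on affines to $\lD_X$ with the order filtration), gives a stronger structural statement -- coisotropy of $\Ch(\sheaf F)$ -- from which the dimension bound on each component drops out stratum by stratum, with no induction over strata.

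Two caveats, both repairable. First, your coordinate expression for the bivector presumes the monoid is free (the simple normal crossings case); for general fine saturated $P$ the monomials $t^p$ are not independent coordinates and, more seriously, $T^*X$ is singular at points lying over deep strata when $\underline{X}$ is (e.g.\ over the vertex of a quadric cone), so the ``standard linear-algebra fact'' cannot be quoted in its smooth-ambient form. It survives, however, on Zariski cotangent spaces: coisotropy gives $\pi_z(N^*_zZ, N^*_zZ)=0$ at smooth points $z$ of $Z$, and the brackets $\{\xi_j, t^p\} = a_j t^p$ all vanish at points over $S^m$, so the rank of $\pi_z$ on $\mathfrak{m}_z/\mathfrak{m}_z^2$ is exactly $2(\dim X - m)$ and the estimate $\dim Z \ge \dim X - m$ goes through; alternatively one can first restrict to $X^m$ exactly as the paper does and work over the classically smooth open stratum. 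Second, your claim that the maximum in $\logdim Z$ is attained at $k = m$ is not true in general -- the intersections $Z \cap \pi^{-1}(X^k)$ for $k > m$ can drop in dimension by less than $k - m$ -- but this is harmless, since you only need the single term $\dim\bigl(Z \cap \pi^{-1}(X^m)\bigr) + m + r = \dim Z + m + r \le \logdim Z$, so the displayed equality should simply be weakened to the inequality $\logdim Z \ge \dim Z + m + r \ge \logdim X$.
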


Note that if $X$ is not idealized, then $\logdim X=\dim X$, and the above inequality reads $ \logdim Z \geq  \dim X$.

\begin{proof}
    Let $k$ be the largest integer such that $Z \subseteq  T^*X^k$.
    By Lemma~\ref{lem:ch_of_restriction_to_stratum}\ref{lem:ch_of_restriction_to_stratum:i} we can replace $X$ by $X^k$ (with the induced idealized log structure) and $\sheaf F$ by its restriction to assume without loss of generality that $Z$ intersects the open log stratum $U$ non-trivially.
    
    It suffices to show that $\logdim \res{Z}{U} \ge \logdim U = \logdim X$.
    Thus we replace $X$ by $U$ and $\sheaf F$ by $\res{\sheaf F}{U}$ and assume that $X$ consists of a single stratum.
    In this case $X$ is classically smooth.
    Locally from the description of the ring of differential operators in Corollary~\ref{cor:local.description.differentials.idealized} we have an isomorphism
    \[
        \lD_{X} \cong \lD_{\underline{X}}[\partial_{m_1},\dotsc,\partial_{m_d}],
    \]
    where $\lD_{\underline{X}}$ is just the usual algebra of differential operators on the classical scheme $\underline{X}$ and $d = \logdim X - \dim X$.
    We note that the generators $\partial_{m_i}$ all commute with everything.
    
    From \cite[Theorem~3.4 on page~43]{Bjoerk:1979:RingsOfDifferentialOperators} it follows that the (left) global dimension of $\lD_{X}$ is
    \[
        \gldim \lD_{X} = \gldim \lD_{\underline{X}} + d = \dim X + d = \logdim X
    \]
    and
    \[
        \gldim \gr \lD_{X} = \gldim \gr\lD_{\underline{X}} + d = 2\dim X + d = \dim X + \logdim X.
    \]
    Thus from \cite[Corollary~7.2 on page~73]{Bjoerk:1979:RingsOfDifferentialOperators} we obtain that
    \[
        \dim Z \ge \gldim \gr \lD_{X} - \gldim \lD_{X} = \dim X
    \]
    and hence finally
    \[
        \logdim Z = \dim Z + d \ge \logdim X. \qedhere
    \]
\end{proof}

\begin{Def}
    A coherent $\lD_X$-module $\sheaf F$ is called \emph{holonomic} if $\logdim \Ch(\sheaf F) = \logdim X$.
    An element of $\catDbcohDMod{X}$ is called \emph{holonomic} if all its cohomology modules are holonomic.
\end{Def}

\begin{Rem}
    If the underlying classical scheme $\underline{X}$ is smooth then we have a forgetful functor from $\lD_{\underline X}$-modules to $\lD_X$-modules via the pullback along the canonical map $X \to \underline X$.
    If $X$ is a smooth log scheme, then this comes from the inclusion $\lD_{X} \subseteq \lD_{\underline{X}}$.
    This forgetful functor does however not preserve holonomicity, as the example of the skyscraper $\kk[\frac{\partial}{\partial x}]$ at the origin of $\as 1$ shows.
    On the other hand it is not too hard to see that the pushforward along the map $X \to \underline X$ does preserve holonomicity.
\end{Rem}

\begin{Prop}\label{prop:logdim_of_dual}
    For any $\sheaf F \in \catDbcohDMod{X}$ one has $\logdim \sheaf F = \logdim \DVerdier_X \sheaf F$.
    In particular $\sheaf F$ is holonomic if and only if $\DVerdier_X \sheaf F$ is holonomic.
\end{Prop}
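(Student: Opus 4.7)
The plan is to reduce, log-stratum by log-stratum, to the non-commutative dimension preservation of Lemma~\ref{lem:duality_keeps_dim_ch}.

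First I would unpack the definition of log dimension. Using the identification $T^*X^k = T^*X|_{X^k}$ (Proposition~\ref{prop:differential restriction}) together with $\Ch(\sheaf F) = \bigcup_\ell \Ch(H^\ell \sheaf F)$, one computes directly
\[
    \logdim \sheaf F = \max_k \bigl(\dim(\Ch(\sheaf F) \cap T^*X^k) + k\bigr) + r,
\]
where $r$ is the generic rank of $X$; the analogous formula holds for $\DVerdier_X \sheaf F$, since $\logdim X^k = \logdim X$. Hence it suffices to prove, for every $k$, that
\[
    \dim(\Ch(\sheaf F) \cap T^*X^k) = \dim(\Ch(\DVerdier_X \sheaf F) \cap T^*X^k).
\]

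Let $i_k\colon X^k \hookrightarrow X$ denote the stratum inclusion with its induced idealized log structure. Via $\lD_{X^k} \cong \O_{X^k} \otimes_{\O_X} \lD_X$, the pullback $i_k^* \sheaf F = \lD_{X^k} \otimes_{\lD_X} \sheaf F$ lies in $\catDbcohDMod{X^k}$. Applying Lemma~\ref{lem:ch_of_restriction_to_stratum} to each cohomology sheaf and aggregating via a spectral sequence (as in the proof of Lemma~\ref{lem:lemma_for_Xk_local_cohomology_and_support}), I obtain $\Ch(i_k^* \sheaf F) = \Ch(\sheaf F) \cap T^*X^k$, and likewise for $\DVerdier_X \sheaf F$. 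Lemma~\ref{lem:duality_keeps_dim_ch} applied on the smooth idealized log scheme $X^k$ gives $\dim\Ch(i_k^*\sheaf F) = \dim\Ch(\DVerdier_{X^k}(i_k^*\sheaf F))$, so the task reduces to the commutation
\[
    \dim \Ch\bigl(\DVerdier_{X^k}(i_k^* \sheaf F)\bigr) = \dim \Ch\bigl(i_k^*(\DVerdier_X \sheaf F)\bigr).
\]

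For this, I would construct a natural isomorphism $\DVerdier_{X^k}(i_k^* \sheaf F) \cong i_k^*(\DVerdier_X \sheaf F)$ up to a cohomological shift and a line bundle twist pulled back from $X$; neither alters the underlying characteristic variety. The main ingredients are the tensor-hom adjunction $\sheafHom_{\lD_{X^k}}(i_k^* \sheaf F,\, \sheaf G) \cong \sheafHom_{\lD_X}(\sheaf F,\, \sheaf G)$ for $\lD_{X^k}$-modules $\sheaf G$, the compatibility $\canon[X^k] \cong i_k^* \canon[X]$ from Proposition~\ref{prop:differential restriction}, and the identification $\sheaf R_{X^k} \cong \lD_{X^k} \otimes_{\lD_X} \sheaf R_X$ of rigid dualizing complexes (up to the same shift and twist). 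The hardest step is this last identification, which is essentially a base-change statement for rigid dualizing complexes along a log-stratum inclusion; I expect it to follow either from verifying the rigidity axiom for $\lD_{X^k} \otimes_{\lD_X} \sheaf R_X$ and invoking uniqueness of rigid dualizing complexes, or from an \'etale-local calculation using Corollaries~\ref{cor:local.description.differentials} and \ref{cor:local.description.differentials.idealized}, exploiting that $\lD_{X^k}$ is obtained from $\lD_X$ by imposing the relations $t^p = 0$ for $p$ in the monoid ideal defining the stratum.

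Finally, the \emph{in particular} statement is formal: by the logarithmic Bernstein inequality (Theorem~\ref{thm:log_bernstein}) every non-zero cohomology sheaf of $\sheaf F$ (resp.\ $\DVerdier_X \sheaf F$) has log dimension at least $\logdim X$, so $\logdim \sheaf F = \logdim X$ exactly when every cohomology of $\sheaf F$ is holonomic. Combined with the equality $\logdim \sheaf F = \logdim \DVerdier_X \sheaf F$, this gives the equivalence of holonomicity for $\sheaf F$ and for $\DVerdier_X \sheaf F$.
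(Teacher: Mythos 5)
Your stratum-by-stratum skeleton is the same as the paper's, and your first two steps are sound: the identity $\Ch(i_k^*\sheaf F) = \res{\Ch(\sheaf F)}{X^k}$ does follow from Lemma~\ref{lem:ch_of_restriction_to_stratum} by the spectral-sequence argument of Lemma~\ref{lem:lemma_for_Xk_local_cohomology_and_support}. The gap is exactly where you locate the ``hardest step'', and it is not just hard but false as stated. Dualizing complexes do not base-change along the quotient $\lD_X \to \lD_{X^k}$ by extension of scalars: the correct transformation is coinduction, $\sheafHom_{\lD_X}(\lD_{X^k},\, \sheaf R_X)$, and this agrees with $\lD_{X^k} \otimes_{\lD_X} \sheaf R_X$ up to a shift and a line-bundle twist only when $\lD_{X^k}$ admits a Koszul-type finite free resolution over $\lD_X$. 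That holds in your implicit test case (the origin in the log line is cut out by the single regular element $x$), but fails for general strata: already for $X = \as 3$ with the log structure given by the coordinate hyperplanes, $X^2$ is the union of the three coordinate axes, which is neither a complete intersection nor Gorenstein; $\sheafExt^{\bullet}_{\lD_X}(\lD_{X^2},\, \lD_X)$ is concentrated in degree $2$ with value a dualizing-type module that is not invertible over $\O_{X^2}$, so no identification of $\lD_{X^2} \otimes_{\lD_X} \sheaf R_X$ with $\sheaf R_{X^2}$ up to shift and line-bundle twist can exist, and with it the claimed commutation $\DVerdier_{X^k}(i_k^*\sheaf F) \cong i_k^*(\DVerdier_X \sheaf F)$ collapses. (Note also that the paper only knows a formula for $\sheaf R$ when the underlying scheme is smooth, and $\underline{X^k}$ is typically singular.) A secondary problem: since $\underline{X}$ may be singular and the strata are not l.c.i., $\O_{X^k}$ can have infinite Tor-dimension over $\O_X$, so $i_k^*\sheaf F$ lies only in $\catDcohDMod[-]{X^k}$ rather than $\catDbcohDMod{X^k}$, and Lemma~\ref{lem:duality_keeps_dim_ch}, stated for bounded complexes, does not apply on $X^k$ as you use it.

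The paper's proof is engineered precisely to avoid ever dualizing on $X^k$. It proves only the inequality $\dim \res{\Ch(\DVerdier_X\sheaf F)}{X^k} \le \dim \res{\Ch(\sheaf F)}{X^k}$ (the reverse then follows from biduality), by identifying the left-hand side with $\dim \Ch(\lD_{X^k} \otimes_{\lD_X} \DVerdier_X\sheaf F)$ via Lemma~\ref{lem:ch_of_restriction_to_stratum} and then using the local-duality-type isomorphism
\[
    \lD_{X^k} \otimes_{\lD_X} \sheafHom_{\lD_X}(\sheaf F,\, \sheaf R) \cong
    \sheafHom_{\lD_X}\bigl(\sheafHom_{\lD_X}(\lD_{X^k},\, \sheaf F),\, \sheaf R\bigr),
\]
which trades restriction for the corestriction $\sheafHom_{\lD_X}(\lD_{X^k},\, -)$ through the single dualizing complex on $X$. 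Then Lemma~\ref{lem:duality_keeps_dim_ch} is applied \emph{on $X$}, and the containment $\Ch(\sheafHom_{\lD_X}(\lD_{X^k},\, \sheaf F)) \subseteq \res{\Ch(\sheaf F)}{X^k}$ comes from the associated-graded subquotient statement \cite[Lemma~D.2.4]{HottaTakeuchiTanisaki:2008:DModulesPerverseSheavesRepresentationTheory}. The stratumwise dimension equality you aim for is in fact true, but only as a corollary of this inequality plus biduality, not via a commutation isomorphism between $\DVerdier$ and $i_k^*$ --- the two functors $i_k^*$ and $\sheafHom_{\lD_X}(\lD_{X^k},\,-)$ genuinely differ by more than a shift and twist here. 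Your concluding paragraph deducing the holonomicity equivalence from the Bernstein inequality is correct and matches the paper.
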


\begin{proof}
    By duality it is sufficient to show that $\logdim \DVerdier_X \sheaf F \le \logdim \sheaf F$.
    As usual, we can assume that $\sheaf F \in \catDModcoh{X}$.
    It suffices to show that for any $k \ge 0$ we have $ \dim \res{\Ch(\DVerdier_X\sheaf F)}{X^k} \le \dim \res{\Ch(\sheaf F)}{X^k}$.
    By Lemma~\ref{lem:ch_of_restriction_to_stratum} the left hand side is equal to $\dim \Ch(\lD_{X^k} \otimes_{\lD_X} \DVerdier_X\sheaf F)$.
    There is a canonical isomorphism
    \[
        \lD_{X^k} \otimes_{\lD_X} \sheafHom_{\lD_X}(\sheaf F,\, \sheaf R) \cong
        \sheafHom_{\lD_X}(\sheafHom_{\lD_X}(\lD_{X^k},\, \sheaf F),\, \sheaf R),
    \]
    where $\sheaf R$ is the dualizing complex.
    As duality preserves the dimension of the characteristic variety (Lemma~\ref{lem:duality_keeps_dim_ch}) it thus is enough to show that $\Ch(\sheafHom_{\lD_X}(\lD_{X^k},\sheaf F))$ is contained in $\res{\Ch(\sheaf F)}{X^k}$.
    This follows from the fact that $\gr\sheafExt^i(\lD_{X^k},\, \sheaf F)$ is isomorphic to a subquotient of $\sheafExt^i_{\O_{T^*X}}(\O_{T^*X^k},\, \gr \sheaf F)$ \cite[Lemma~D.2.4]{HottaTakeuchiTanisaki:2008:DModulesPerverseSheavesRepresentationTheory}.
\end{proof}

\section{The log perverse t-structure}\label{sec:t-structure}

A fundamental property of holonomic D-modules in the classical setting is that their duals are concentrated in cohomological degree $0$.
Indeed, this property can be used to characterize the holonomic D-modules among all coherent ones.
As the following example shows, this is no longer true in the logarithmic setting.

\begin{Ex}\label{ex:duality_fails}
    Let $X = \as 1$ be the log line (with log structure given by the origin).
    The module $\O_X$ has a free resolution
    \[
        \lD_X \xrightarrow{\cdot x\frac{\partial}{\partial x}} \lD_X
    \]
    and hence dual (as right $\lD_X$-module and up to a twist by a line bundle)
    \[
        \sheafHom_{\lD_X}(\O_X, \lD_X)[1] = \bigl(\lD_X \xrightarrow{x\frac{\partial}{\partial x} \cdot} \lD_X\bigr)[1] = \O_X
    \]
    (where the complex now in in degrees $0$ and $1$, as opposed to $-1$ and $0$).
    On the other hand the skyscraper $\mathbb{C}_0$ at the origin with $x\frac{\partial }{\partial x}$ acting by zero has a free resolution
    \[
        \lD_X \xrightarrow{\cdot(x,\, 1 - x\frac{\partial}{\partial x})} \lD_X^2 \xrightarrow{\cdot\begin{psmallmatrix} x\frac{\partial}{\partial x}\\x\end{psmallmatrix}} \lD_X.
    \]
    Thus the dual is 
    \[
        \sheafHom_{\lD_X}(\mathbb{C}_0, \lD_X)[1] = \bigl(\mathbb{C}[x,x{\textstyle \frac{\partial}{\partial x}}]/(x,1-x{\textstyle\frac{\partial}{\partial x}})\bigr)[-1] = \mathbb{C}_0[-1],
    \]
    where now $x\frac{\partial}{\partial x}$ acts (on the right) as the identity.
    
    In contrast, the dual of the non-holonomic skyscraper $\CC[x\frac{\partial}{\partial x}]$, by a similar computation, is concentrated in cohomological degree $0$.
\end{Ex}

It turns out that this example is typical. 
That is, the failure of $\DVerdier_X$ to send holonomic modules to sheaves is entirely due to shifts concentrated along the log strata.
We can thus correct this failure by using a \enquote{perverse} t-structure on the dual side.

Recall that a t-structure on a triangulated category $\cat{D}$ consists of a pair of full subcategories $(\cat D^{\le 0}, \cat D^{\ge 1})$ subject to the following conditions:
\begin{itemize}
    \item $\cat D^{\le 0}[1] \subseteq \cat D^{\le 0}$ and $\cat D^{\ge 1}[-1] \subseteq \cat D^{\ge 1}$.
    \item $\Hom_{\cat D}(\cat D^{\le 0},\, \cat D^{\ge 1}) = 0$.
    \item Each object $X \in \cat D$ can be embedded in a distinguished triangle
        \begin{equation}\label{eq:t-structure-triangle}
            A \to X \to B
        \end{equation}
        with $A \in \cat D^{\le 0}$ and $B \in \cat D^{\ge 1}$.
\end{itemize}
T-structures were introduced in \cite{BeilinsonBernsteinDeligne:1982:FaisceauxPervers} in order to define perverse sheaves.
The prototypical example is given by the \emph{standard t-structure} of a derived category $\cat D = D(\cat A)$ of an abelian category $\cat A$, where $\cat D^{\le 0}$ (resp.~$\cat D^{\ge 1}$) consists of the complexes whose cohomologies vanish in positive (resp.~non-positive) degrees.

Let us quickly recap the main properties of a t-structure:
For any integer $n$ we set $\cat D^{\le n} = \cat D^{\le 0}[-n]$ and $\cat D^{\ge n} = \cat D^{\ge 1}[-n+1]$.
Then the inclusion $\cat D^{\le n} \hookrightarrow \cat D$ has a right adjoint, often denoted $\tau_{\le n}\colon \cat D \to \cat D^{\le n}$, while $\cat D^{\ge n} \hookrightarrow \cat D$ has a left adjoint, often denoted $\tau_{\ge n}\colon \cat D \to \cat D^{\ge n}$.
In particular, the distinguished triangle~\eqref{eq:t-structure-triangle} is unique up to isomorphism and given by $A = \tau_{\le 0}(X)$ and $B = \tau_{\ge 1}(X)$.
The intersection $\cat D^{\heartsuit} = \cat D^{\le 0} \cap \cat D^{\ge 0}$ is an abelian subcategory, called the \emph{heart} of the t-structure, and the functor
\[
    H^n = \tau_{\le n} \circ \tau_{\ge n} = \tau_{\ge n} \circ \tau_{\le n} \colon \cat D \to \cat D^{\heartsuit}
\]
is a cohomological functor.
In this paper we use the unadorned notation $\tau_{\le n}$ and $\tau_{\ge n}$ for the truncation functors of the standard t-structure, while the functors for other t-structures will have added superscripts.

We come now to the central definition of this paper, the \emph{t-structure of log perverse $\lD_X$-modules}.
Write $r_X = \logdim X - \dim X$ for the generic rank of $\overline{M}_X^{\mathrm{gp}}$ (the reader only interested in non-idealized log varieties may always assume that $r_X = 0$).
Define two full subcategories of the derived category $\catDbqcohDMod{X}$ of quasi-coherent $\lD_X$-modules by
\begin{align*}
    \lsDqcoh^{\le 0}(\lD_X) & = \left\{ \sheaf F \in \catDbqcohDMod{X} : \supp H^{k+r_X}(\sheaf F) \subseteq X^k \text{ for all $k$} \right\}, \\
    \lsDqcoh^{\ge 0}(\lD_X) & = \left\{ \sheaf F \in \catDbqcohDMod{X} : \Gamma_{X^k}(\sheaf F) \in \catDQCoh[\ge k+r_X]{X} \text{ for all $k$}\right\}.
\end{align*}
This t-structure should be thought of as a perverse t-structure with respect to the log stratification.
For example on the standard log line the structure sheaf $\O_X$ is in the heart $\lsDqcoh^{\le 0}(\lD_X) \cap \lsDqcoh^{\ge 0}(\lD_X)$ and so is any skyscraper module at the origin when placed in cohomological degree $1$.
We do however not impose any smoothness condition within the individual strata.
Thus for example the \emph{unshifted} skyscraper D-modules at any point in $\as{1}\setminus\{0\}$ are log perverse.

\begin{Thm}\label{thm:log-perverse-t-structure}
    The pair $(\lsDqcoh^{\le 0}(\lD_X),\, \lsDqcoh^{\ge 0}(\lD_X))$ forms a t-structure on $\catDbqcohDMod{X}$.
\end{Thm}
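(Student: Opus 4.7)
The first axiom of a t-structure --- stability under the appropriate shifts --- is immediate from the nesting $X^{k+1} \subseteq X^k$. Indeed, if $\sheaf F \in \lsDqcoh^{\le 0}(\lD_X)$, then $H^{k+r_X}(\sheaf F[1]) = H^{(k+1)+r_X}(\sheaf F)$ is supported on $X^{k+1} \subseteq X^k$, and the analogous statement for $\lsDqcoh^{\ge 0}(\lD_X)$ follows from the fact that $R\Gamma_{X^k}$ commutes with shifts.

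For the Hom-vanishing axiom, let $A \in \lsDqcoh^{\le 0}(\lD_X)$ and $B \in \lsDqcoh^{\ge 1}(\lD_X)$. Using the standard truncation triangles and induction on the cohomological amplitude of $A$, I would reduce to the case $A = \sheaf K[-m]$ where $\sheaf K$ is a quasi-coherent $\lD_X$-module with support in $X^{\max(m-r_X,\,0)}$. Any map $\sheaf K \to B[m]$ must factor through $R\Gamma_{X^{\max(m-r_X,\,0)}}(B)[m]$; by the defining condition on $B$ this object lies in $\catDQCoh[\ge 1]{X}$ as a complex of $\O_X$-modules, hence (the underlying cohomology sheaves agreeing) also in the standard $\cat D^{\ge 1}(\lD_X)$. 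Therefore the Hom vanishes by orthogonality for the standard t-structure on $\catDDMod{X}$.

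The existence of truncations is the technical heart of the theorem. The plan is an inductive construction along the filtration $X \supseteq X^1 \supseteq \cdots \supseteq X^{\dim X} \supseteq \emptyset$, in the spirit of BBD gluing adapted to the coherent setting. Concretely, given $\sheaf F \in \catDbqcohDMod{X}$, I would construct $\tauls_{\le 0}\sheaf F$ by iteratively \enquote{correcting} $\sheaf F$ one stratum at a time. At step $k$, applied to an intermediate object $\sheaf G$, consider the distinguished triangle
\[
    R\Gamma_{X^{k+1}}(\sheaf G) \to \sheaf G \to \sheaf G'
\]
and apply the standard truncation $\tau_{\le k+r_X}$ to the right-hand term, then reassemble. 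The adjunction between $R\Gamma_{X^k}$ and the inclusion of $\lD_X$-modules supported on $X^k$ supplies the gluing data that replaces the missing open pushforward. One then takes $\tauls_{\ge 1}\sheaf F$ to be the cone of the resulting map $\tauls_{\le 0}\sheaf F \to \sheaf F$, and verifies the defining conditions by tracking cohomology sheaves stratum by stratum, using Lemma~\ref{lem:ch_of_restriction_to_stratum} and the commutation of $R\Gamma_{X^k}$ with the construction.

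The principal obstacle is axiom (iii). Classically one would invoke the full BBD recollement along the stratification, but the logarithmic setting lacks a well-behaved pushforward along open embeddings, as emphasized in the introduction, so the full recollement formalism is not directly available. Local cohomology $R\Gamma_{X^k}$ and its left adjoint (the inclusion of $\lD_X$-modules supported on $X^k$) remain well-behaved and provide an adequate substitute for the recollement machinery. Two delicate points arise: first, one must ensure that the iteratively constructed truncations are cohomologically bounded, which follows because the filtration has length $\dim X + 1$; second, one must verify that each intermediate step produces a quasi-coherent object, which is handled by the exactness of $R\Gamma_{X^k}$ on quasi-coherent complexes combined with Lemma~\ref{lem:coherent_subs}.
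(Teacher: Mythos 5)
Your argument is essentially sound, but note first that the paper does not prove this theorem directly at all: its entire proof is the one-line observation that this is \cite[Theorem~3.5]{Kashiwara:2004:tStructureOnHolonomicDModuleCoherentOModules} applied with the family of supports $\Phi^k = X^k$ and a shift by $r_X$. What you have written is therefore a reconstruction, in the logarithmic setting, of the proof of the theorem the paper cites. Your treatment of the first two axioms is complete and correct: the reduction to a single module $\sheaf K[-m]$ supported on $X^{\max(m-r_X,\,0)}$, the factorization of any map through $\Gamma_{X^{\max(m-r_X,\,0)}}(B)[m]$ via the adjunction between the inclusion of supported complexes and local cohomology, and the remark that the standard t-structure on $\catDDMod{X}$ is detected on the underlying complexes of $\O_X$-modules are exactly the right steps. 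For the truncation axiom, your stratum-by-stratum induction via the triangles $\Gamma_{X^{k+1}}(\sheaf G) \to \sheaf G \to \sheaf G'$ is precisely Kashiwara's strategy (note that $\sheaf G' \cong j_*j^*\sheaf G$ for the open inclusion $j$ of $X \setminus X^{k+1}$, which is the real reason the t-structure lives on $\catDbqcohDMod{X}$ rather than on the coherent subcategory); however, your \enquote{reassemble} hides the actual content. One must define the corrected object explicitly, e.g.\ as the fiber of the composite $\sheaf G \to \sheaf G' \to \tau_{>k+r_X}\sheaf G'$ via an octahedron, and then prove that this correction does not disturb the support and local-cohomology conditions already arranged at the other strata; that persistence check is the heart of Kashiwara's proof and is only asserted in your sketch (\enquote{tracking cohomology sheaves stratum by stratum}). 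Relative to the paper, your route buys self-containedness at the price of redoing this bookkeeping, while the citation buys brevity, since Kashiwara's theorem is proved for an arbitrary decreasing family of supports and carries over verbatim to quasi-coherent $\lD_X$-modules.

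Two side remarks need correcting. The obstruction to invoking full BBD recollement is not a specifically logarithmic defect of open pushforward: the sheaf-theoretic $j_*$ on quasi-coherent complexes is perfectly well behaved (it appears in your own triangles); what is missing, already in Kashiwara's classical setting, is $j_!$ and a left adjoint to the inclusion of complexes supported on a closed subset. And your appeal to the \enquote{exactness} of $\Gamma_{X^k}$ together with Lemma~\ref{lem:coherent_subs} is misplaced: $\Gamma_{X^k}$ is not exact, and Lemma~\ref{lem:coherent_subs} concerns coherent submodules of quasi-coherent modules, which is irrelevant here. Quasi-coherence and boundedness of the intermediate objects instead follow from noetherianness together with the finite cohomological dimension of $\Gamma_{X^k}$ and $j_*$.
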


\begin{proof}
    This is just \cite[Theorem~3.5]{Kashiwara:2004:tStructureOnHolonomicDModuleCoherentOModules} with $\Phi^k = X^k$ and a shift by $r_X$.
\end{proof}

\begin{Def}
    We write
    \[
        \catDMod[\mathrm p]{X} =  \lsDqcoh^{\le 0}(\lD_X) \cap  \lsDqcoh^{\ge 0}(\lD_X) \subseteq  \catDbqcohDMod{X}
    \]
    for the heart of the log perverse t-structure and call its objects \emph{log perverse $\lD_X$-modules}.
    The corresponding truncation functors are denoted by $\tauls_{\le i}$ and $\tauls_{\ge i}$ and the cohomology functors by $\lsH^n = \tauls_{\le n} \circ \tauls_{\ge n}$.
\end{Def}

\begin{Rem}
    The log perverse t-structure does not restrict to a t-structure on $\catDbcohDMod{X}$, i.e.~the pair $\bigl(\lsDqcoh^{\le n}(\lD_X) \cap  \catDbcohDMod{X},\, \lsDqcoh^{\ge n}(\lD_X) \cap \catDbcohDMod{X}\bigr)$ does not form a t-structure.
    For example the sheaf in Remark~\ref{rem:bad_G_filtration} does not have coherent perverse cohomology modules.
\end{Rem}

The definition of $\lsDqcoh^{\ge 0}(\lD_X)$ is additionally motivated by the following proposition, which enhances Proposition~\ref{prop:Xk_local_cohomology_and_support} to a statement about log dimension.
(Of course, as for any t-structure, fixing $\lsDqcoh^{\ge 0}(\lD_X)$ uniquely determines $\lsDqcoh^{>0}(\lD_X)$.)

\begin{Prop}\label{prop:logdim_of_dual_and_ls}
    Let $\sheaf F \in \catDbcohDMod{X}$.
    Then $\Gamma_{X^k}\sheaf F \in \lsDqcoh^{\ge n}(\lD_X)$ if and only if
    \[
        \logdim \res{\Ch\bigl(H^\ell(\DVerdier_X \sheaf F)\bigr)}{X^k} \le \logdim X - n - \ell
    \]
    for all $\ell \in \ZZ$.
\end{Prop}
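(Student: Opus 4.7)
The plan is to unwind both sides into the same set of inequalities indexed over pairs $(j,\ell)$ with $j \ge k$. First I would translate the membership $\Gamma_{X^k}\sheaf F \in \lsDqcoh^{\ge n}(\lD_X)$ into the defining local cohomology conditions. Since the $X^i$ form a descending chain of reduced closed subschemes, we have the composition law $\Gamma_{X^j} \circ \Gamma_{X^k} \cong \Gamma_{X^{\max(j,k)}}$. Unwinding the definition of $\lsDqcoh^{\ge n}$ (noting that a shift by $-n$ pushes the lowest nontrivial cohomology degree up by $n$), the condition $\Gamma_{X^k}\sheaf F \in \lsDqcoh^{\ge n}(\lD_X)$ becomes: $\Gamma_{X^{\max(j,k)}}\sheaf F \in \catDQCoh[\ge j+r_X+n]{X}$ for every $j$. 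For $j \le k$ the resulting constraints are all implied by the $j=k$ case, so equivalently
\[
    \Gamma_{X^j}\sheaf F \in \catDQCoh[\ge j+r_X+n]{X} \quad\text{for all } j \ge k.
\]

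Next I would apply Proposition~\ref{prop:Xk_local_cohomology_and_support} term-by-term. For each $j \ge k$ it converts the previous condition into the statement that $\dim \res{\Ch(H^\ell(\DVerdier_X\sheaf F))}{X^j} \le \logdim X - (j+r_X+n) - \ell$ for every $\ell \in \ZZ$. Writing $Z_\ell = \Ch(H^\ell(\DVerdier_X\sheaf F)) \subseteq T^*X$, we therefore need to show that the system of inequalities
\[
    \dim\res{Z_\ell}{X^j} + j \le \logdim X - r_X - n - \ell, \qquad j \ge k,
\]
is equivalent to the single log-dimension inequality $\logdim \res{Z_\ell}{X^k} \le \logdim X - n - \ell$ (for each fixed $\ell$).

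The final step is the bookkeeping for log dimension. Using the identification $T^*X^k = \res{T^*X}{X^k}$ from Proposition~\ref{prop:differential restriction} and the fact that the idealized log strata of $X^k$ satisfy $(X^k)^j = X^{k+j}$ with generic rank $r_{X^k} = r_X + k$, the defining formula for log dimension gives, for any $Z \subseteq T^*X$,
\[
    \logdim\res{Z}{X^k} = \max_{j \ge k}\bigl(\dim \res{Z}{X^j} + j\bigr) + r_X
\]
(where on the left we view $\res{Z}{X^k}$ inside the idealized log scheme $T^*X^k$). Taking the maximum over $j \ge k$ of the inequalities in the previous paragraph and adding $r_X$ yields exactly $\logdim \res{Z_\ell}{X^k} \le \logdim X - n - \ell$, and conversely. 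This completes the equivalence.

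The only real obstacle is the last bookkeeping step: one must carefully verify that the log stratification, generic rank, and log dimension of $T^*X^k$ (as an idealized log scheme obtained by restriction from $T^*X$) interact with the formulas in the way just described, so that the single inequality on $\logdim \res{Z_\ell}{X^k}$ really packages all of the per-stratum dimension bounds into one. Everything else is a mechanical assembly of Proposition~\ref{prop:Xk_local_cohomology_and_support} and the definition of the log perverse t-structure.
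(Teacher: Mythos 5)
Your proposal is correct and takes essentially the same route as the paper's proof: translate $\Gamma_{X^k}\sheaf F \in \lsDqcoh^{\ge n}(\lD_X)$ into the conditions $\Gamma_{X^j}\sheaf F \in \catDQCoh[\ge j+r_X+n]{X}$ for $j \ge k$, apply Proposition~\ref{prop:Xk_local_cohomology_and_support} stratum by stratum, and package the resulting dimension bounds via the identity $\logdim \res{Z}{X^k} = \max_{j \ge k}\bigl(\dim \res{Z}{X^j} + j\bigr) + r_X$. The paper's proof is exactly this computation written tersely (indexed by $i = j-k \ge 0$), with your careful justifications of the $\Gamma$-composition step and of the log-dimension bookkeeping for the idealized strata $(X^k)^j = X^{k+j}$ left implicit.
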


\begin{proof}
    By definition, $\Gamma_{X^k}\sheaf F \in \lsDqcoh^{\ge n}(\lD_X)$ if and only if $\Gamma_{X^{k+i}}\sheaf F \in \catDQCoh[\ge n + k + i + r_X]{X}$ for all $i \ge 0$.
    By Proposition~\ref{prop:Xk_local_cohomology_and_support} this is the case if and only if
    \begin{align*}
        \logdim \res{\Ch\bigl(H^\ell(\DVerdier_X \sheaf F)\bigr)}{X^k} & =
        \max_{i \ge 0} \dim \res{\Ch\bigl(H^\ell(\DVerdier_X \sheaf F)\bigr)}{X^{k+i}} + k + i + r_X \\ & \le
        \logdim X - n - \ell
        \quad\text{ for all $\ell \in \ZZ$.}
        \qedhere
    \end{align*}
\end{proof}

With the definitions of the log perverse t-structure in place, we can now state the interplay between holonomicity and duality in the logarithmic setting.

\begin{Thm}\label{thm:dual-of-holonomic}
    A coherent $\lD_X$-module $\sheaf F \in \catDModcoh{X}$ is holonomic if and only if $\DVerdier_X(\sheaf F) \in \catDMod[\mathrm p]{X}$.
    Conversely, a coherent log perverse $\lD_X$-module $\sheaf G \in \catDMod[\mathrm p]{X}$ is holonomic if and only if $\DVerdier_X(\sheaf G) \in \catDModcoh{X}$.
\end{Thm}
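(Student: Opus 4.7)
The plan is to prove both statements by combining Proposition~\ref{prop:logdim_of_dual_and_ls} with a sharpened form of the log Bernstein inequality that is implicit in the proof of Theorem~\ref{thm:log_bernstein}. Namely, for any coherent $\lD_X$-module $\sheaf F$ and any irreducible component $Z$ of $\Ch(\sheaf F)$, writing $m_Z = \max\{m : Z \subseteq \pi^{-1}(X^m)\}$, the reduction in that proof to the locally closed log stratum through the generic point of $\pi(Z)$, followed by the application of classical Bernstein to the extended differential algebra $\lD_{\underline X}[\partial_{m_1},\dotsc,\partial_{m_d}]$, actually yields the stratified inequality
\[
    \dim Z + m_Z \ge \dim X.
\]
This strictly refines $\logdim Z \ge \logdim X$ and is the ingredient that converts log-dimension bounds on characteristic varieties into support conditions along log strata. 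The main subtlety of the theorem is that Proposition~\ref{prop:logdim_of_dual_and_ls} by itself only controls the $\lsDqcoh^{\ge 0}$ half of the log perverse t-structure; the $\lsDqcoh^{\le 0}$ half requires exactly the additional information $m_Z \ge \dim X - \dim Z$ supplied by the stratified inequality.

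For the first statement, assume $\sheaf F$ is holonomic. Proposition~\ref{prop:logdim_of_dual_and_ls} applied with $k = n = 0$ to $\DVerdier_X \sheaf F$ (using $\Gamma_{X^0} = \mathrm{id}$ and $\DVerdier_X^2 \sheaf F \cong \sheaf F$) characterises $\DVerdier_X \sheaf F \in \lsDqcoh^{\ge 0}(\lD_X)$ as the inequality $\logdim \Ch(\sheaf F) \le \logdim X$, which by Theorem~\ref{thm:log_bernstein} is precisely holonomicity. For $\DVerdier_X \sheaf F \in \lsDqcoh^{\le 0}(\lD_X)$, Proposition~\ref{prop:logdim_of_dual} transports holonomicity to every $\mathcal G = H^\ell(\DVerdier_X \sheaf F)$, Proposition~\ref{prop:nc_dual_vanishing_and_support}(ii) shifted by $-\logdim X$ gives $\dim \Ch(\mathcal G) \le \logdim X - \ell$, and the stratified Bernstein inequality then forces $m_Z \ge \dim X - \dim Z \ge \ell - r_X$ for every component $Z$ of $\Ch(\mathcal G)$, so that $\supp \mathcal G = \pi(\Ch(\mathcal G)) \subseteq X^{\ell - r_X}$. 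The reverse direction is immediate: $\DVerdier_X \sheaf F \in \catDMod[\mathrm p]{X}$ in particular lies in $\lsDqcoh^{\ge 0}(\lD_X)$, and the same application of Proposition~\ref{prop:logdim_of_dual_and_ls} reads off holonomicity.

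For the second statement I reduce to the first. If $\sheaf G \in \catDMod[\mathrm p]{X}$ is holonomic, Proposition~\ref{prop:logdim_of_dual_and_ls} applied to $\sheaf G$ with $k = n = 0$ gives $\logdim \Ch(H^\ell \DVerdier_X \sheaf G) \le \logdim X - \ell$; for $\ell > 0$ this is strictly less than $\logdim X$, which by log Bernstein forces $H^\ell(\DVerdier_X \sheaf G) = 0$, so $\DVerdier_X \sheaf G \in \catDcohDMod[\le 0]{X}$. Dually, combining $\sheaf G \in \lsDqcoh^{\le 0}(\lD_X)$, the holonomicity of each $H^\ell(\sheaf G)$, and the stratified Bernstein inequality yields $\dim \Ch(H^\ell \sheaf G) \le \logdim X - \ell$ for every $\ell$, which by Corollary~\ref{cor:nc_dual_vanishing_and_support_derived} translates to $\DVerdier_X \sheaf G \in \catDcohDMod[\ge 0]{X}$. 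Together, $\DVerdier_X \sheaf G$ is a coherent $\lD_X$-module concentrated in degree zero. For the converse, given $\DVerdier_X \sheaf G \in \catDModcoh{X}$ and $\sheaf G \in \catDMod[\mathrm p]{X}$, feeding $\sheaf F := \DVerdier_X \sheaf G$ into the first statement shows that $\sheaf F$ is holonomic (since $\DVerdier_X \sheaf F = \sheaf G \in \catDMod[\mathrm p]{X}$ by hypothesis), whence $\sheaf G$ is holonomic by Proposition~\ref{prop:logdim_of_dual}.
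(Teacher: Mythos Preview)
Your proof is correct and follows essentially the same route as the paper: your stratified Bernstein inequality is precisely Lemma~\ref{lem:log-defect-and-support}, and the four implications you establish correspond respectively to Lemmas~\ref{lem:D_and_logdim_of_ch}, \ref{lem:D_left_t-exact}, \ref{lem:perverse_positive_and_dual}, and \ref{lem:D_and_logdim_of_ch_2}. Two of your invocations are superfluous but harmless: Proposition~\ref{prop:logdim_of_dual} is not actually needed in the $\lsDqcoh^{\le 0}$ step of the first statement (the bound $\dim\Ch(H^\ell\DVerdier_X\sheaf F)\le\logdim X-\ell$ comes straight from Proposition~\ref{prop:nc_dual_vanishing_and_support}\ref{prop:nc_dual_vanishing_and_support:ii}), and the stratified Bernstein inequality plays no role in the $\catDcohDMod[\ge 0]{X}$ step of the second statement, where the bound follows directly from $\supp H^\ell(\sheaf G)\subseteq X^{\ell-r_X}$ together with holonomicity and the definition of $\logdim$.
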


We split the main arguments of the proof into a series of lemmas.

\begin{Lem}\label{lem:log-defect-and-support}
    Let $\sheaf F$ be a coherent $\lD_X$-module.
    Then, $\supp \sheaf F \subseteq  X^{\dim X - \dim \Ch(\sheaf F)}$.
\end{Lem}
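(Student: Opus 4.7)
The plan is to reduce to the case where $x$ sits in the open log stratum and then apply the logarithmic Bernstein inequality (Theorem~\ref{thm:log_bernstein}) on that stratum directly. Pick any $x \in \supp \sheaf F$ and let $j$ be the unique integer with $x \in X^j \setminus X^{j+1}$. What we must establish is the inequality $j \ge \dim X - \dim \Ch(\sheaf F)$, since then $x \in X^j \subseteq X^{\dim X - \dim \Ch(\sheaf F)}$. Since the statement is local around $x$, I would replace $X$ by the open neighbourhood $X \setminus X^{j+1}$. On this neighbourhood $X^j$ coincides with the single log stratum $S^j$: it is classically smooth, and as an idealized log scheme it has generic rank $r+j$, where $r$ is the generic rank of $X$. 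In particular the log stratification of $X^j$ is trivial, so for any closed subset $Y \subseteq T^\ast X^j$ one has the simple identity $\logdim Y = \dim Y + (r+j)$, and moreover $\logdim X^j = (\dim X - j) + (r + j) = \dim X + r$.

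Next I would pass to the stratum: let $i \colon X^j \hookrightarrow X$ denote the inclusion and consider
\[
    H^0(i^\epb \sheaf F) = \O_{X^j} \otimes_{\O_X} \sheaf F,
\]
which is a coherent $\lD_{X^j}$-module. The key observation is that this sheaf is nonzero at $x$: since $\sheaf F_x$ is a finitely generated, nonzero module over the local ring $\O_{X,x}$, Nakayama's lemma applied to the ideal defining $X^j$ gives $\sheaf F_x / \sheaf I_x \sheaf F_x \neq 0$. Hence $\Ch(H^0(i^\epb \sheaf F))$ is a nonempty closed subset of $T^\ast X^j$, and the logarithmic Bernstein inequality gives $\logdim Z \ge \logdim X^j = \dim X + r$ for every irreducible component $Z$. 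Combining this with the identity $\logdim Z = \dim Z + (r+j)$ recorded above yields $\dim Z \ge \dim X - j$.

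Finally, Lemma~\ref{lem:ch_of_restriction_to_stratum}\ref{lem:ch_of_restriction_to_stratum:i} identifies $\Ch(H^0(i^\epb \sheaf F))$ with $\Ch(\sheaf F) \cap T^\ast X^j$, which is contained in $\Ch(\sheaf F)$ under $T^\ast X^j \hookrightarrow T^\ast X$. Therefore $\dim \Ch(\sheaf F) \ge \dim Z \ge \dim X - j$, which rearranges to the desired inequality $j \ge \dim X - \dim \Ch(\sheaf F)$. The proof is then essentially complete; the only part that requires any care is the bookkeeping around the idealized structure and the two different generic ranks ($r$ for $X$ and $r+j$ for $X^j$), which ensures that the log-Bernstein bound on $X^j$ translates correctly into a classical dimension bound on $\Ch(\sheaf F) \subseteq T^\ast X$. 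Beyond that there is no genuine obstacle, as every ingredient (the nonvanishing of the restriction, the comparison of characteristic varieties, and the triviality of the log stratification of $X^j$ after shrinking) is either immediate or already established in Section~\ref{sec:Dmods-and-strata}.
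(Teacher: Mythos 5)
Your reduction to the stratum through $x$ and the bookkeeping with the two generic ranks are all fine, but there is one genuinely false step: the justification of the nonvanishing of $H^0(i^*\sheaf F)$ at $x$ via Nakayama. A coherent $\lD_X$-module is almost never coherent over $\O_X$ (consider $\lD_X$ itself), so the stalk $\sheaf F_x$ need not be finitely generated over $\O_{X,x}$, and Nakayama's lemma does not apply. Worse, the principle your sentence would establish --- that a coherent $\lD_X$-module with $x$ in its support has nonzero fiber along any closed subscheme through $x$ --- is false. On the log line $X = \as 1$ take $\sheaf F = \lD_X/\lD_X(x-1)$ and write $\partial = x\frac{\partial}{\partial x}$ and $\delta$ for the class of $1$. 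From $x\partial = (\partial - 1)x$ one gets $x\partial^n\delta = (\partial - 1)^n\delta$, hence $(x-1)\partial^n\delta = \bigl((\partial-1)^n - \partial^n\bigr)\delta$, so multiplication by $x-1$ is surjective on $\sheaf F$ and $\sheaf F/(x-1)\sheaf F = 0$, even though $\supp\sheaf F = \{1\}$. The nonvanishing you need is a genuinely logarithmic phenomenon, special to the strata $X^k$ (compare the warning preceding Lemma~\ref{lem:lemma_for_Xk_local_cohomology_and_support}); your Nakayama sentence never uses that $X^j$ is a log stratum, so it cannot be correct as written.

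The gap is repairable with the tools you already cite, because Lemma~\ref{lem:ch_of_restriction_to_stratum}\ref{lem:ch_of_restriction_to_stratum:i} is an \emph{equality}: for any good filtration one has $\supp\sheaf F = \supp\gr\sheaf F = \pi\bigl(\Ch(\sheaf F)\bigr)$, so $x \in X^j \cap \supp\sheaf F$ forces $\res{\Ch(\sheaf F)}{X^j} \neq \emptyset$, and the equality then gives $\Ch\bigl(H^0(i^*\sheaf F)\bigr) = \res{\Ch(\sheaf F)}{X^j} \neq \emptyset$, i.e.\ $H^0(i^*\sheaf F) \neq 0$. (Equivalently, one may apply Nakayama legitimately to the coherent $\O_{T^*X}$-module $\gr\sheaf F$, using $\gr(\sheaf F/\sheaf I\sheaf F) \cong \gr\sheaf F \otimes_{\O_{T^*X}} \O_{T^*X^j}$ from the proof of that lemma.) With this repair your argument is correct, and it is a mild variant of the paper's: the paper never restricts to a closed stratum, but instead restricts $\sheaf F$ to the open set $U = X \setminus X^{d}$ with $d = \dim X - \dim\Ch(\sheaf F)$ and observes that, since all strata meeting $U$ have index $< d$, a nonzero restriction would satisfy $\logdim\Ch(\res{\sheaf F}{U}) < \logdim U$, contradicting Theorem~\ref{thm:log_bernstein} directly. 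Both routes rest on the same Bernstein input; yours additionally needs Lemma~\ref{lem:ch_of_restriction_to_stratum}, while the paper's is shorter and avoids the nonvanishing issue entirely.
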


Here we use the convention that $X^k = X$ for $k \le 0$.
The quantity $\dim X - \dim \Ch(\sheaf F)$, if positive, can be seen as measuring how much the classical Bernstein inequality fails.
The lemma then says that the more it fails, the deeper in the log stratification the support of $\sheaf F$ must be located.

\begin{proof}
    Let $U = X \setminus X^{\dim X - \dim \Ch(\sheaf F)}$ and set $\sheaf G = \res{\sheaf F}{U}$.
    If $\sheaf G$ is non-zero, then
    \begin{align*}
        \logdim \Ch(\sheaf G) &< \dim \Ch(\sheaf G) + (\dim X - \dim \Ch(\sheaf G)) + (\logdim X - \dim X) \\& = \logdim X = \logdim U,
    \end{align*}
    in contradiction to Theorem~\ref{thm:log_bernstein}.
\end{proof}

\begin{Lem}\label{lem:D_left_t-exact}
    Let $\sheaf F \in  \catDcohDMod[\ge 0]{X}$.
    Then $\DVerdier_X \sheaf F \in  \lsDqcoh^{\le 0}(\lD_X)$.
\end{Lem}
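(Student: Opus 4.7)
The plan is to unravel the definition of $\lsDqcoh^{\le 0}(\lD_X)$, which requires showing that $\supp H^{k+r_X}(\DVerdier_X\sheaf F) \subseteq X^k$ for every $k \in \ZZ$. Equivalently, setting $\ell = k + r_X$, I need to show $\supp H^{\ell}(\DVerdier_X\sheaf F) \subseteq X^{\ell - r_X}$. My strategy is to combine the dimension bounds on characteristic varieties of cohomologies of the non-commutative dual (coming from Corollary~\ref{cor:nc_dual_vanishing_and_support_unbounded}) with the containment of supports in log strata guaranteed by Lemma~\ref{lem:log-defect-and-support}.

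First I will bound the characteristic varieties. Since $\sheaf F \in \catDcohDMod[\ge 0]{X}$, Corollary~\ref{cor:nc_dual_vanishing_and_support_unbounded} applied with $n = 0$ gives $\dim \Ch\bigl(H^m(\DD \sheaf F)\bigr) \le -m$ for all $m \in \ZZ$, where $\DD \sheaf F = \sheafHom_{\lD_X}(\sheaf F, \sheaf R)$. Because $\canon[X]^{\vee}$ is a line bundle, tensoring with it is exact and does not affect characteristic varieties; taking into account the shift by $-\logdim X$ in the definition of $\DVerdier_X$, this translates to
\[
    \dim \Ch\bigl(H^\ell(\DVerdier_X \sheaf F)\bigr) = \dim \Ch\bigl(H^{\ell - \logdim X}(\DD \sheaf F)\bigr) \le \logdim X - \ell.
\]

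Now I feed this bound into Lemma~\ref{lem:log-defect-and-support}, applied to the coherent $\lD_X$-module $H^\ell(\DVerdier_X \sheaf F)$ (which is coherent since duality preserves $\catDbcohDMod{X}$, noting that $\sheaf F$ is bounded by hypothesis). The lemma gives
\[
    \supp H^\ell(\DVerdier_X \sheaf F) \subseteq X^{\dim X - \dim \Ch(H^\ell(\DVerdier_X\sheaf F))} \subseteq X^{\dim X - (\logdim X - \ell)} = X^{\ell - r_X},
\]
using that $r_X = \logdim X - \dim X$ and that $X^j$ is increasing as $j$ decreases (with $X^j = X$ for $j \le 0$). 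Setting $\ell = k + r_X$ yields exactly the containment $\supp H^{k+r_X}(\DVerdier_X\sheaf F) \subseteq X^k$ required for membership in $\lsDqcoh^{\le 0}(\lD_X)$.

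There is no single hard step here: the work was done earlier in establishing Corollary~\ref{cor:nc_dual_vanishing_and_support_unbounded} (which already encodes the interaction between duality and dimension of the characteristic variety) and Lemma~\ref{lem:log-defect-and-support} (the log Bernstein inequality applied to open strata). The only point that warrants care is that the shift $-\logdim X$ in the definition of $\DVerdier_X$ aligns perfectly with the shift by $r_X$ appearing in the log perverse t-structure; this is precisely what makes the bookkeeping in the final inequality work out to $\ell - r_X$ rather than some other offset.
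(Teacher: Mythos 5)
Your proof is correct and takes essentially the same route as the paper: the paper derives the bound $\dim \Ch\bigl(H^\ell(\DVerdier_X \sheaf F)\bigr) \le \logdim X - \ell$ by citing Proposition~\ref{prop:Xk_local_cohomology_and_support} with $k = n = 0$, which in that case is just Corollary~\ref{cor:nc_dual_vanishing_and_support_unbounded} unwound (exactly your citation, with the same bookkeeping for the shift by $-\logdim X$ and the $\omega_X^{\vee}$-twist), and then applies Lemma~\ref{lem:log-defect-and-support} precisely as you do to conclude $\supp H^\ell(\DVerdier_X \sheaf F) \subseteq X^{\ell - r_X}$.
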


\begin{proof}
    We have to show that $\supp H^k(\DVerdier_X \sheaf F)$ is contained in $X^k$.
    By Proposition~\ref{prop:Xk_local_cohomology_and_support} we know that
    \[
        \dim \Ch\bigl(H^k(\DVerdier_X \sheaf F)\bigr) \le \logdim X - k \text{  for all $k$}.
    \]
    Thus it follows from Lemma~\ref{lem:log-defect-and-support} that the support of $H^k(\DVerdier_X \sheaf F)$ is contained in $X^{\dim X - (\logdim X - k)} = X^{k-r_X}$.
\end{proof}

\begin{Lem}\label{lem:D_and_logdim_of_ch}
    Let $\sheaf F \in  \catDModcoh{X}$ be a coherent $\lD_X$-module and let $n$ be any integer.
    Then $\logdim \Ch(\sheaf F) \le \logdim X + n$ if and only if $\DVerdier_X\sheaf F \in  \lsDqcoh^{\ge -n}(\lD_X)$.
\end{Lem}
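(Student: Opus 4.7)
The key observation is that this lemma is essentially a direct corollary of Proposition~\ref{prop:logdim_of_dual_and_ls}, obtained by specializing to the stratum $X^0 = X$ (so $\Gamma_{X^0}$ is the identity) and applying it to $\DVerdier_X \sheaf F$ rather than $\sheaf F$ itself.

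Concretely, the plan is as follows. First, since $\sheaf F \in \catDModcoh{X}$ is coherent, the dual $\DVerdier_X\sheaf F$ lies in $\catDbcohDMod{X}$, so Proposition~\ref{prop:logdim_of_dual_and_ls} applies with $\sheaf F$ replaced by $\DVerdier_X \sheaf F$. Taking $k = 0$ (so $X^0 = X$ and $\res{\,\cdot\,}{X^0}$ is the identity on subsets of $T^*X$) and taking the integer in the proposition to be $-n$, we obtain that $\DVerdier_X \sheaf F \in \lsDqcoh^{\ge -n}(\lD_X)$ if and only if
\[
    \logdim \Ch\bigl(H^\ell(\DVerdier_X \DVerdier_X \sheaf F)\bigr) \le \logdim X + n - \ell \quad \text{for all } \ell \in \ZZ.
\]

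Second, I would invoke the biduality $\DVerdier_X \DVerdier_X \sheaf F \cong \sheaf F$, which holds because $\DVerdier_X$ is an involutive antiequivalence of $\catDbcohDMod{X}$ (as recorded in Section~\ref{sec:duality}). Since $\sheaf F$ is a coherent module placed in cohomological degree $0$, we have $H^0(\DVerdier_X\DVerdier_X\sheaf F) = \sheaf F$ and $H^\ell(\DVerdier_X\DVerdier_X\sheaf F) = 0$ for $\ell \ne 0$. The conditions above are therefore vacuous for $\ell \ne 0$ (the empty characteristic variety has $\logdim = -\infty$), and for $\ell = 0$ reduce to exactly $\logdim \Ch(\sheaf F) \le \logdim X + n$, which is the desired equivalence.

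I do not expect any serious obstacle: the content of the lemma is entirely packaged into Proposition~\ref{prop:logdim_of_dual_and_ls}, and what remains is the purely formal manipulation of specializing $k = 0$ together with biduality of $\DVerdier_X$. The only minor bookkeeping point to verify is that the shift conventions for $\lsDqcoh^{\ge -n}$ match correctly with the ``$\logdim X - n - \ell$'' appearing in the proposition when $n$ is replaced by $-n$, which is straightforward.
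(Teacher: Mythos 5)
Your proposal is correct and is exactly the paper's proof: the paper disposes of this lemma in one line as ``Proposition~\ref{prop:logdim_of_dual_and_ls} applied to $\DVerdier_X\sheaf F$,'' which is precisely your specialization to $k=0$ combined with biduality $\DVerdier_X\DVerdier_X\sheaf F \cong \sheaf F$. Your write-up simply makes explicit the bookkeeping (the shift $n \mapsto -n$ and the vanishing of $H^\ell$ for $\ell \neq 0$) that the paper leaves implicit.
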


\begin{proof}
    This is just Proposition~\ref{prop:logdim_of_dual_and_ls} applied to $\DVerdier_X \sheaf F$.
\end{proof}

\begin{Lem}\label{lem:perverse_positive_and_dual}
    If $\sheaf F \in \catDbcohDMod{X}$ is contained in $\lsDqcoh^{\ge 0}(\lD_X)$, then $\DVerdier_X\sheaf F \in \catDcohDMod[\le 0]{X}$.
\end{Lem}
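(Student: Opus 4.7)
The plan is to reduce the claim to a single application of Proposition~\ref{prop:logdim_of_dual_and_ls} at $k=0$ combined with the logarithmic Bernstein inequality.

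First I would note that coherence of $\DVerdier_X \sheaf F$ is automatic, since $\DVerdier_X$ restricts to an involutive anti-autoequivalence of $\catDbcohDMod{X}$ (this is built into the definition of the rigid dualizing complex in Section~\ref{sec:duality}). So the content of the statement is the vanishing $H^\ell(\DVerdier_X\sheaf F)=0$ for $\ell > 0$.

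Next I would apply Proposition~\ref{prop:logdim_of_dual_and_ls} with $k = 0$ and $n = 0$. Since $X^0 = X$, we have $\Gamma_{X^0}\sheaf F = \sheaf F$ and $\res{\Ch(\,\cdot\,)}{X^0}=\Ch(\,\cdot\,)$. The proposition therefore translates the hypothesis $\sheaf F \in \lsDqcoh^{\ge 0}(\lD_X)$ into the inequality
\[
    \logdim \Ch\bigl(H^\ell(\DVerdier_X \sheaf F)\bigr) \le \logdim X - \ell \quad \text{for every } \ell \in \ZZ.
\]
Now I would invoke the logarithmic Bernstein inequality (Theorem~\ref{thm:log_bernstein}): if $H^\ell(\DVerdier_X \sheaf F)$ were non-zero for some $\ell > 0$, then every irreducible component of its characteristic variety has log dimension at least $\logdim X$, contradicting the displayed bound. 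Hence $H^\ell(\DVerdier_X\sheaf F)=0$ for all $\ell>0$, which is exactly $\DVerdier_X\sheaf F \in \catDcohDMod[\le 0]{X}$.

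Once one sees that Proposition~\ref{prop:logdim_of_dual_and_ls} with $k=0$ is the right specialization (so that the restriction to a stratum becomes the full characteristic variety and the perverse shift collapses to the ordinary cohomological degree), the argument is essentially immediate; the \enquote{hard part} has already been done in setting up the machinery---namely calibrating the shift by $r_X$ in the definition of $\lsDqcoh^{\ge 0}$ so that it meshes precisely with the logarithmic Bernstein bound.
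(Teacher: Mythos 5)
Your proof is correct and is essentially the paper's own argument: the paper likewise deduces $\logdim \Ch\bigl(H^\ell(\DVerdier_X \sheaf F)\bigr) \le \logdim X - \ell$ from Proposition~\ref{prop:logdim_of_dual_and_ls} (implicitly at $k=n=0$, which you spell out) and then concludes vanishing for $\ell>0$ from the logarithmic Bernstein inequality. Your explicit remarks on the coherence of $\DVerdier_X\sheaf F$ and on why $k=0$ collapses the stratum restriction are fine and consistent with the paper.
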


\begin{proof}
    By Proposition~\ref{prop:logdim_of_dual_and_ls},
    \[
        \logdim \Ch\bigl(H^\ell(\DVerdier_X \sheaf F)) \le \logdim X - \ell
    \]
    Thus by the logarithmic Bernstein inequality (Theorem~\ref{thm:log_bernstein}) $H^\ell(\DVerdier_X\sheaf F)$ has to vanish for $\ell > 0$.
\end{proof}

\begin{Lem}\label{lem:D_and_logdim_of_ch_2}
    Let $\sheaf F \in \catDMod[\mathrm p]{X}$ be coherent.
    If $\logdim \Ch(\sheaf F) \le \logdim X + n$, then $\DVerdier_X\sheaf F \in  \catDcohDMod[\ge -n]{X}$.
\end{Lem}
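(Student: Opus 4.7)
The strategy is to convert the log-dimension hypothesis on $\Ch(\sheaf F)$ into a classical-dimension bound on each cohomology sheaf $H^\ell(\sheaf F)$ of $\sheaf F$ (with respect to the standard t-structure), and then feed this into Corollary~\ref{cor:nc_dual_vanishing_and_support_derived}, which controls the vanishing range of the dualizing functor $\sheafHom_{\lD_X}(-,\sheaf R)$ precisely in terms of such classical bounds. The rest of the argument then amounts to comparing the cohomological shift in the definition of $\DVerdier_X$ with the output of that corollary.

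The first step will be to extract support information from the hypothesis that $\sheaf F$ is log perverse. Membership of $\sheaf F$ in $\lsDqcoh^{\ge 0}(\lD_X)$ forces $H^\ell(\sheaf F) = 0$ for $\ell < r_X$, and membership in $\lsDqcoh^{\le 0}(\lD_X)$ forces $\supp H^\ell(\sheaf F) \subseteq X^{\ell - r_X}$ for every $\ell$. Consequently $\Ch(H^\ell(\sheaf F)) \subseteq \pi^{-1}(X^{\ell - r_X}) = (T^*X)^{\ell - r_X}$, where $T^*X$ carries the pulled-back log structure and inherits the generic rank $r_X$ from $X$. Substituting $k = \ell - r_X$ into the defining formula of log dimension for closed subsets of $T^*X$ then yields the key inequality
\[
    \logdim \Ch(H^\ell(\sheaf F)) \ge \dim \Ch(H^\ell(\sheaf F)) + \ell,
\]
in which the two occurrences of $r_X$ cancel. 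Combining this with $\logdim \Ch(H^\ell(\sheaf F)) \le \logdim \Ch(\sheaf F) \le \logdim X + n$ produces the classical bound
\[
    \dim \Ch(H^\ell(\sheaf F)) \le \logdim X + n - \ell \qquad \text{for every $\ell \in \ZZ$.}
\]

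With this bound established, I would invoke Corollary~\ref{cor:nc_dual_vanishing_and_support_derived} for the differential $\O_X$-algebra $\lD_X$ to conclude
\[
    \sheafHom_{\lD_X}(\sheaf F,\, \sheaf R) \in \catDcohDMod[\ge -\logdim X - n]{X^{\mathrm{op}}}.
\]
Unwinding the definition $\DVerdier_X\sheaf F = \sheafHom_{\lD_X}(\sheaf F,\sheaf R)[-\logdim X] \otimes_{\O_X} \canon[X]^{\vee}$, the overall shift by $-\logdim X$ (the line-bundle twist is invisible to the standard t-structure) translates this into $\DVerdier_X\sheaf F \in \catDcohDMod[\ge -n]{X}$, as desired. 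The only delicate point is matching the $r_X$-shift built into the log perverse t-structure against the $r_X$-shift built into the definition of log dimension for subsets of $T^*X$; the key inequality above is precisely where these cancel, which explains why no trace of $r_X$ survives in the final statement.
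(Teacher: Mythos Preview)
Your argument is correct and follows essentially the same route as the paper: use the support condition from $\lsDqcoh^{\le 0}$ to turn the log-dimension bound on $\Ch(\sheaf F)$ into the classical bound $\dim\Ch(H^\ell(\sheaf F))\le \logdim X+n-\ell$, and then appeal to the general vanishing criterion for the dual. The only cosmetic difference is that the paper cites Proposition~\ref{prop:Xk_local_cohomology_and_support} (applied with $k=0$ to $\DVerdier_X\sheaf F$, so that $\Gamma_{X^0}=\mathrm{id}$ and $\DVerdier_X\DVerdier_X\sheaf F=\sheaf F$), which already has the $\logdim X$-shift of $\DVerdier_X$ built in, whereas you invoke Corollary~\ref{cor:nc_dual_vanishing_and_support_derived} directly and then unwind that shift by hand; the content is identical.
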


\begin{proof}
    By assumption, $\supp H^{\ell+r_X}(\sheaf F)$ is contained in $X^\ell$ for all integers $\ell$.
    Thus $\logdim \Ch(\sheaf F) \le \logdim X + n$ implies
    \[ 
        \dim \Ch\bigl( H^{\ell+r_X}(\sheaf F)\bigr) \le \logdim X - \ell - r_X + n.
    \]
    Hence the result follows from Proposition~\ref{prop:Xk_local_cohomology_and_support}.
\end{proof}

Theorem~\ref{thm:dual-of-holonomic} is now an immediate consequence of Lemmas~\ref{lem:D_left_t-exact} through \ref{lem:D_and_logdim_of_ch_2}.

\begin{Cor}\label{cor:hol_are_p-coherent}
    If $\sheaf F \in \catDbcohDMod{X}$ is holonomic, then so is $\lsH^n(\sheaf F)$ for any $n$.
\end{Cor}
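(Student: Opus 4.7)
The strategy is to establish a natural isomorphism
\[
  \lsH^n(\sheaf F) \cong \DVerdier_X H^{-n}(\DVerdier_X \sheaf F)
\]
in the heart $\catDMod[\mathrm p]{X}$. Once this is in place the corollary follows at once: $\DVerdier_X \sheaf F$ is holonomic by Proposition~\ref{prop:logdim_of_dual}, hence each $H^{-n}(\DVerdier_X \sheaf F)$ is a coherent holonomic $\lD_X$-module; Theorem~\ref{thm:dual-of-holonomic} then identifies its dual as a coherent log perverse module, and a second application of Proposition~\ref{prop:logdim_of_dual} shows it is also holonomic. Writing $\sheaf G = \DVerdier_X \sheaf F$ and invoking involutivity of $\DVerdier_X$, it is equivalent to prove $\lsH^n(\DVerdier_X \sheaf G) \cong \DVerdier_X H^{-n}(\sheaf G)$ for any holonomic $\sheaf G \in \catDbcohDMod{X}$.

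The first step is a mild enhancement of Lemma~\ref{lem:D_left_t-exact}, which is where holonomicity enters in an essential way: if $\sheaf H \in \catDbcohDMod{X}$ is holonomic and lies in $\catDcohDMod[\le 0]{X}$, then $\DVerdier_X \sheaf H \in \lsDqcoh^{\ge 0}(\lD_X)$. By Proposition~\ref{prop:Xk_local_cohomology_and_support} this reduces to the bounds $\dim \res{\Ch(H^\ell(\sheaf H))}{X^k} \le \dim X - k - \ell$ for all integers $\ell, k$; these are vacuous when $\ell > 0$ since $H^\ell(\sheaf H) = 0$, and follow from the holonomicity bound $\dim \res{\Ch(H^\ell(\sheaf H))}{X^k} \le \dim X - k$ when $\ell \le 0$.

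The main argument then proceeds by dualizing standard truncation triangles of $\sheaf G$. Dualizing $\tau_{\le -n}\sheaf G \to \sheaf G \to \tau_{>-n}\sheaf G$ produces a triangle whose first term lies in $\lsDqcoh^{\le n-1}(\lD_X)$ by Lemma~\ref{lem:D_left_t-exact}, and whose third term lies in $\lsDqcoh^{\ge n}(\lD_X)$ by the preliminary step applied to the shifted object $\tau_{\le -n}\sheaf G[-n] \in \catDcohDMod[\le 0]{X}$. Uniqueness of truncation identifies this as the log perverse truncation of $\DVerdier_X \sheaf G$ at level $n$, yielding $\tauls_{\ge n}(\DVerdier_X \sheaf G) \cong \DVerdier_X \tau_{\le -n}\sheaf G$. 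Dualizing the triangle $\tau_{\le -n-1}\sheaf G \to \tau_{\le -n}\sheaf G \to H^{-n}(\sheaf G)[n]$ analogously produces a triangle whose first term lies in the shifted heart $\catDMod[\mathrm p]{X}[-n] \subseteq \lsDqcoh^{\le n}(\lD_X)$ by Theorem~\ref{thm:dual-of-holonomic}, and whose third term lies in $\lsDqcoh^{\ge n+1}(\lD_X)$ by the same shifted preliminary step. A second use of uniqueness gives $\tauls_{\le n}(\DVerdier_X \tau_{\le -n}\sheaf G) \cong \DVerdier_X H^{-n}(\sheaf G)[-n]$. Combining the two identifications produces the required isomorphism as an object of the heart.

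The principal obstacle is the enhancement of Lemma~\ref{lem:D_left_t-exact} in the second paragraph: it is precisely here that holonomicity converts the merely left t-exact duality functor into one that swaps the standard and log perverse t-structures on the holonomic subcategory. Once that point is isolated, everything else is a routine manipulation with uniqueness of truncation triangles and the involutivity of $\DVerdier_X$.
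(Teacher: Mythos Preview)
Your proof is correct, and in fact proves something slightly sharper than the paper's argument: you establish the explicit formula $\lsH^n(\sheaf F) \cong \DVerdier_X H^{-n}(\DVerdier_X \sheaf F)$, which amounts to saying that on holonomic complexes $\DVerdier_X$ is t-exact from the standard t-structure to the log perverse one. The paper's own proof is terser: it reduces to the case of a sheaf in a single cohomological degree via the spectral sequence $\lsH^p(H^q) \Rightarrow \lsH^{p+q}$, and then quotes Theorem~\ref{thm:dual-of-holonomic} directly. Both arguments hinge on the same input (Theorem~\ref{thm:dual-of-holonomic} for a single holonomic sheaf), but your route through truncation triangles and uniqueness makes the t-exactness visible, while the paper's route hides it inside the spectral sequence. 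Your ``mild enhancement of Lemma~\ref{lem:D_left_t-exact}'' is exactly the missing half of t-exactness that the paper does not isolate as a separate statement; it is correct as written, and indeed follows immediately from Proposition~\ref{prop:Xk_local_cohomology_and_support} and the definition of holonomicity.
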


\begin{proof}
    We will show that if $\sheaf G \in \catDbcohDMod{X}$ is holonomic, then $\lsH^n(\DVerdier_X\sheaf G)$ has holonomic cohomology.
    Then the statement follows with $\sheaf G = \DVerdier_X \sheaf F$ as $\DVerdier_X$ preserves holonomicity.

    Using the spectral sequence $\lsH^p(H^q(\sheaf F)) \Rightarrow \lsH^{p+q}$ (see~\cite[Theorem~\RomanNum{IV}.5.1]{KiehlWeissauer:2001:WeilConjecturesPerverseSheavesAndFourierTransform}), we can reduce to $\sheaf G$ being concentrated in a single cohomological degree.
    But then by Theorem~\ref{thm:dual-of-holonomic} $\DVerdier_X\sheaf G$ is concentrated in a single perverse degree and the statement is trivial.
\end{proof}

\section{Two filtrations}\label{sec:filtrations}

Given a coherent $\lD_X$-module $\sheaf F$ one often wants to find the maximal holonomic submodule.
More generally, one is interested in filtering $\sheaf F$ by submodules with increasing log dimension of the associated characteristic varieties.

\begin{Def}
    For a coherent $\lD_X$-module $\sheaf F$ we let $G_i\sheaf F$ denote the largest submodule $\sheaf G$ of $\sheaf F$ with $\logdim\Ch(\sheaf G) \le \logdim X + i$.
    This defines an increasing filtration $G_\bullet$ on $\sheaf F$, called the \emph{log Gabber filtration.}
\end{Def}

We remark that such a largest submodule $G_i\sheaf F$ indeed exists because $\lD_X$ is noetherian and for $\sheaf G, \sheaf H \subseteq  \sheaf F$ we have $\Ch(\sheaf G \cup  \sheaf H) = \Ch(\sheaf G) \cup  \Ch(\sheaf H)$.

\begin{Rem}\label{rem:bad_G_filtration}
    In the classical setting the filtration $G_\bullet$ is quite well behaved.
    In particular $G_0$ never vanishes, i.e.~every coherent D-module has a holonomic submodule.
    In the logarithmic setting this is no longer necessarily the case.

    Consider the example of the standard log plane $X = \as 2$, i.e.~the defining divisor is given by the coordinate axes.
    The $\lD_X$-module $\sheaf F = \lD_X/\lD_X(x-y, x\frac{\partial}{\partial x} + y\frac{\partial}{\partial y})$ is concentrated along the diagonal and has a $2$-dimensional characteristic variety.
    As the fiber of $\Ch(\sheaf F)$ over the origin is $1$-dimensional, $\logdim \Ch(\sheaf F) = 3$ and $\sheaf F$ is not holonomic.
    Consider now $G_0\sheaf F$.
    As $\Ch(G_0\sheaf F)$ is closed and conical, $G_0\sheaf F$ has to vanish in some neighborhood of the origin as otherwise the fiber of $\Ch(G_0\sheaf F)$ over the origin would again be $1$-dimensional.
    Thus $G_0\sheaf F$ has to vanish and $\sheaf F$ has no holonomic submodule.
\end{Rem}

\begin{Rem}\label{rem:no_extension}
    By the same argument the restriction of $\sheaf F$ to $\as 2 \setminus 0$ (which is holonomic) has no holonomic extension to $\as 2$.
    Thus in general if $X \subseteq \bar X$ is a (possibly partial) compactification, one cannot expect that holonomic D-modules on $X$ always extend to holonomic D-modules on $\bar X$.
    This is in contrast with the classical situation, where such an extension is always possible.
    Fundamentally, this is the result of a possibly non-generic interaction of the log structure of $\bar X$ with the characteristic variety of any module.
    
    \label{rem:good_compactification}%
    However we can prove the following local statement: For any normal toric variety $X$ with its toric log structure and any holonomic $\lD_X$-module $\sheaf F$ on $X$, there exists a compact smooth log variety $Y$, containing $X$ as an open subvariety, and a holonomic $\lD_Y$-module $\tilde{\sheaf F}$ such that $\res{\tilde{\sheaf F}}{X} = \sheaf F$.
    This follows by carefully choosing a compactification via toric methods, using Tevelev's Lemma \cite[Lemma 2.2]{tevelev} to control how the support of $\sheaf F$ intersects the new boundary, and an application of Lemma~\ref{lem:ses_by_support}. It will be fleshed out further in upcoming work.
\end{Rem}

In the classical case one obtains good behaviour of the Gabber filtration by comparing it to the Sato--Kashiwara filtration which is defined in an intrinsically functorial way.
In the logarithmic setting, one needs to use log perverse t-structure to define this filtration.
However the fact that this t-structure does not restrict to the coherent subcategory means that the filtration is not always well-defined.
Thus we make the following definition.

\begin{Def}
    A coherent complex $\sheaf F \in \catDbcohDMod{X}$ is called \emph{p-coherent} if each perverse cohomolgy object $\lsH^n(\sheaf F)$ is coherent.
\end{Def}

By Corollary~\ref{cor:hol_are_p-coherent}, every holonomic complex is p-coherent.

\begin{Lem}\label{lem:logdim_of_min_dual}
    If $\sheaf F \in \catDMod[\mathrm p]{X}$ is coherent then
    \[
        \logdim \Ch\bigl(H^{\logdim X - \logdim \Ch(\sheaf F)}(\DVerdier_X \sheaf F)) = \logdim \Ch(\sheaf F).
    \]
\end{Lem}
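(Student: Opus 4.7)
The plan is to bracket the log dimension of $H^e(\DVerdier_X\sheaf F)$ from above and below, where I write $d = \logdim \Ch(\sheaf F)$ and $e = \logdim X - d$. For the upper bound I will use the perverse hypothesis on $\sheaf F$: since $\sheaf F \in \catDMod[\mathrm p]{X}$ is in particular in $\lsDqcoh^{\ge 0}(\lD_X)$, the equality $\sheaf F = \Gamma_{X^0}\sheaf F$ lets me apply Proposition~\ref{prop:logdim_of_dual_and_ls} with $k = n = 0$, yielding the uniform estimate
\[
    \logdim \Ch\bigl(H^\ell(\DVerdier_X\sheaf F)\bigr) \le \logdim X - \ell
\]
for every $\ell \in \ZZ$. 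Specializing to $\ell = e$ this reads $\le d$, as required.

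For the matching lower bound I will run a pigeonhole argument on the cohomological degrees of $\DVerdier_X \sheaf F$. The estimate above already makes the bound \emph{strict} for every $\ell > e$. Next, applying Lemma~\ref{lem:D_and_logdim_of_ch_2} with $n = d - \logdim X$ (the hypothesis $\logdim \Ch(\sheaf F) \le \logdim X + n$ holds with equality) produces $\DVerdier_X\sheaf F \in \catDcohDMod[\ge e]{X}$, so that $H^\ell(\DVerdier_X\sheaf F) = 0$ for $\ell < e$. Finally, Proposition~\ref{prop:logdim_of_dual} gives
\[
    \max_\ell \logdim \Ch\bigl(H^\ell(\DVerdier_X\sheaf F)\bigr) = \logdim \DVerdier_X \sheaf F = \logdim \Ch(\sheaf F) = d.
\]
The maximum can therefore only be attained at $\ell = e$, supplying the reverse inequality $\logdim \Ch(H^e(\DVerdier_X\sheaf F)) \ge d$.

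I do not anticipate any serious obstacle here: the proof is a three-step assembly of Propositions~\ref{prop:logdim_of_dual} and \ref{prop:logdim_of_dual_and_ls} together with Lemma~\ref{lem:D_and_logdim_of_ch_2}. The only point requiring a bit of care is that objects of $\catDMod[\mathrm p]{X}$ are in general complexes with classical cohomology sitting in several degrees, so $\Ch(\sheaf F)$ and $\logdim \sheaf F$ must be interpreted via $\bigcup_\ell \Ch(H^\ell(\sheaf F))$; with this convention, all three ingredients apply verbatim.
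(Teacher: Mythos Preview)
Your argument is correct and essentially identical to the paper's own proof: both combine Proposition~\ref{prop:logdim_of_dual_and_ls} (upper bound from $\sheaf F \in \lsDqcoh^{\ge 0}$), Lemma~\ref{lem:D_and_logdim_of_ch_2} (vanishing of $H^\ell(\DVerdier_X\sheaf F)$ for $\ell < e$), and Proposition~\ref{prop:logdim_of_dual} (preservation of $\logdim\Ch$ under duality) in the same pigeonhole argument. The only cosmetic difference is that the paper indexes by $\logdim X - \ell$ rather than your $e$.
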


\begin{proof}
    By Proposition \ref{prop:logdim_of_dual_and_ls}, 
    \[
        \logdim \Ch\bigl(H^{\logdim X - \ell}(\DVerdier_X \sheaf F)) \le \ell
    \]
    and by Lemma~\ref{lem:D_and_logdim_of_ch_2}
    \[
        H^{\logdim X - \ell}(\DVerdier_X \sheaf F) = 0  \text{ for } \ell > \logdim \Ch(\sheaf F).
    \]
    Thus, as $\logdim \Ch(\DVerdier_X \sheaf F) = \logdim \Ch(\sheaf F)$ (Proposition \ref{prop:logdim_of_dual}), necessarily
    \[
        \logdim \Ch\bigl(H^{\logdim X - \logdim\Ch(\sheaf F)}(\DVerdier_X \sheaf F)) = \logdim \sheaf F.
        \qedhere
    \]
\end{proof}

For p-coherent sheaves one has the following dual statement to Proposition~\ref{prop:logdim_of_dual_and_ls}.

\begin{Prop}\label{prop:logdim_of_dual_and_std}
    Let $\sheaf F \in \catDbcohDMod{X}$ and assume that $\DVerdier_X \sheaf F$ is p-coherent.
    If $\sheaf F \in \catDQCoh[\ge n]{X}$ then
    \[
        \logdim \Ch\bigl(\lsH^\ell(\DVerdier_X \sheaf F)\bigr) \le \logdim X - n - \ell
    \]
    for all $\ell \in \ZZ$.
\end{Prop}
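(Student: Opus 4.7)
Set $\sheaf G = \DVerdier_X \sheaf F$ and, for each $\ell \in \ZZ$, write $\sheaf H_\ell = \lsH^\ell \sheaf G$ and $d_\ell = \logdim X - \logdim \Ch(\sheaf H_\ell)$. By Lemma~\ref{lem:logdim_of_min_dual} combined with Lemma~\ref{lem:D_and_logdim_of_ch_2}, $d_\ell$ is exactly the smallest cohomological degree at which $\DVerdier_X \sheaf H_\ell$ is non-zero; consequently, the bound $\logdim \Ch(\sheaf H_\ell) \le \logdim X - n - \ell$ is equivalent to $\DVerdier_X \sheaf H_\ell \in \catDcohDMod[\ge n+\ell]{X}$, i.e., to $\DVerdier_X \sheaf H_\ell[\ell] \in \catDQCoh[\ge n]{X}$. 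Moreover, Lemma~\ref{lem:D_left_t-exact} applied to the hypothesis yields $\sheaf G \in \lsDqcoh^{\le -n}$, so $\sheaf H_\ell = 0$ for $\ell > -n$ and the claim is automatic there.

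For $\ell \le -n$, the plan is to induct on the perverse amplitude $[a, b]$ of $\sheaf G$. The base case $a = b$ is direct: if $\sheaf G = \sheaf H_{\ell_0}[-\ell_0]$ then $\sheaf F \cong \DVerdier_X \sheaf H_{\ell_0}[\ell_0]$, so the hypothesis is literally the desired conclusion. For the inductive step with $b > a$, I would peel off the top perverse cohomology using the triangle $\tauls_{<b}\sheaf G \to \sheaf G \to \sheaf H_b[-b]$, whose dual is
\[
    \DVerdier_X \sheaf H_b[b] \to \sheaf F \to \DVerdier_X \tauls_{<b}\sheaf G.
\]
Lemma~\ref{lem:perverse_positive_and_dual} places $\DVerdier_X \sheaf H_b[b]$ in cohomological degrees $[d_b - b, -b]$. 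The long exact sequence together with the vanishing $H^j\sheaf F = 0$ for $j < n$ is meant to yield both the bound $d_b \ge n + b$ and the transfer of the standard lower bound to $\DVerdier_X \tauls_{<b}\sheaf G$; the inductive hypothesis, applied to the strictly smaller amplitude $[a, b-1]$, then disposes of the remaining $\ell < b$.

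The main obstacle is that the connecting maps in the long exact sequence can a priori produce cohomology of $\DVerdier_X \tauls_{<b}\sheaf G$ in degrees strictly below $n$, so that neither the top bound for $\ell = b$ nor the transferred hypothesis follows automatically from a naive LES chase. The cleanest fix is to replace step-by-step peeling with a simultaneous extremal-term analysis on the spectral sequence associated with the perverse-filtration-of-$\sheaf G$ dualized to a filtration of $\sheaf F$, whose graded pieces are the $\DVerdier_X \sheaf H_\ell[\ell]$. Choose $\ell_\ast$ minimizing $d_\ell - \ell$ and set $s_0 = d_{\ell_\ast} - \ell_\ast$; only those $\ell$ achieving the minimum contribute at total cohomological degree $s_0$, while every entry at total degree $s_0 - 1$ vanishes by the defining property of the minimum. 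Hence no incoming differential can cancel the extremal contributions, and a supplementary analysis of the outgoing differentials (using the degree range $[d_\ell, 0]$ for $\DVerdier_X \sheaf H_\ell$ provided by Lemma~\ref{lem:perverse_positive_and_dual}) forces the extremal entry to survive to $E_\infty$. Combined with $H^{s_0}\sheaf F = 0$ for $s_0 < n$, this produces a contradiction unless $s_0 \ge n$, yielding $d_\ell - \ell \ge n$ for every $\ell$ and finishing the proof.
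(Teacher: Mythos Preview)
Your overall strategy --- an extremal-degree contradiction on the perverse Postnikov filtration of $\DVerdier_X\sheaf F$, dualized back to a filtration of $\sheaf F$ --- is the same as the paper's. With your notation, the paper's $d^{\max}$ equals $-s_0$, and the paper also argues that a nonzero contribution to $H^{s_0}(\sheaf F)$ must survive. The difference is in how that survival is established.

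The gap is in your ``supplementary analysis of the outgoing differentials''. Knowing only the degree range $[d_\ell-\ell,-\ell]$ of $\DVerdier_X\sheaf H_\ell[\ell]$ does \emph{not} force the extremal entry to survive: for any choice of $\ell_\ast$ there can be indices $\ell$ with $d_\ell-\ell = s_0+1$, so the $E_1$-targets of the outgoing differentials at total degree $s_0+1$ need not vanish, and nothing you have written rules out that these differentials are injective and kill everything at total degree $s_0$. Degree ranges alone are too coarse here.

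What the paper does instead is use the finer invariant $\logdim$ of the cohomology sheaves (via Proposition~\ref{prop:logdim_of_dual_and_ls} and Lemma~\ref{lem:logdim_of_min_dual}). Pick $j$ achieving the extremum. The connecting map
\[
  H^{s_0}\bigl(\DVerdier_X\sheaf H_j[j]\bigr)\;\longrightarrow\;H^{s_0+1}\bigl(\DVerdier_X\tauls_{\ge j+1}\sheaf G\bigr)
\]
cannot be injective, because the source has $\logdim = \logdim X - j - s_0$ (this is exactly Lemma~\ref{lem:logdim_of_min_dual}), while Proposition~\ref{prop:logdim_of_dual_and_ls} applied to $\tauls_{\ge j+1}\sheaf G\in\lsDqcoh^{\ge j+1}$ bounds the $\logdim$ of the target strictly below that. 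Hence $H^{s_0}(\DVerdier_X\tauls_{\ge j}\sheaf G)\ne 0$, and a second triangle (with $\tauls_{<j}$, all of whose dual pieces lie in $\catDQCoh[\ge s_0]{X}$) injects this into $H^{s_0}(\sheaf F)$, giving the contradiction. So the missing ingredient in your outline is precisely this $\logdim$ comparison; the outgoing map is controlled not by where it lands but by the fact that its target is too small in log dimension to absorb the source.
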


\begin{proof}
    For simplicity let us assume that $n=0$.
    The general statement is just a shift of this special case.

    Set $d_\ell = \logdim \Ch\bigl(\lsH^\ell(\DVerdier_X \sheaf F)\bigr) - (\logdim X - \ell)$ and $d^{\max} = \max_\ell d_\ell$.
    If $d^{\max} \le 0$ there is nothing to prove.
    Otherwise let $j$ be any index with $d_{j} = d^{\max}$.
    By Lemma~\ref{lem:D_and_logdim_of_ch_2}, Proposition~\ref{prop:logdim_of_dual_and_ls} and Lemma~\ref{lem:logdim_of_min_dual} we have for all $\ell \in \ZZ$
    \[
        \DVerdier_X\bigl(\lsH^{\ell}(\DVerdier_X\sheaf F)[-\ell]\bigr) \in \catDQCoh[\ge -d_\ell]{X},
    \]
    \[
        \logdim H^{-i}\biggl(\DVerdier_X\bigl(\lsH^{\ell}(\DVerdier_X\sheaf F)[-\ell]\bigr)\biggr) \le  \logdim X - \ell + i \text{ for all $i \in \ZZ$,}
    \]
    and
    \[
        \logdim H^{-d_\ell}\biggl(\DVerdier_X\bigl(\lsH^{\ell}(\DVerdier_X\sheaf F)[-\ell]\bigr)\biggr) = \logdim X - \ell + d_\ell.
    \]
    Applying duality to the triangle
    \[
        \lsH^j(\DVerdier_X\sheaf F)[-j]
        \to
        \tauls_{\ge j} \DVerdier_X \sheaf F
        \to
        \tauls_{\ge j+1}\DVerdier_X \sheaf F
    \]
    gives the exact sequence
    \begin{align*}
        0
        \to
        H^{-d^{\max}}\bigl(\DVerdier_X \tauls_{\ge j} \DVerdier_X \sheaf F\bigr) 
        & \to
        H^{-d^{\max}}\bigl(\DVerdier_X \bigl(\lsH^j(\DVerdier_X\sheaf F)[-j] \bigr)\bigr)
        \\ & \to
        H^{-d^{\max}+1}\bigl(\DVerdier_X \tauls_{\ge j+1} \DVerdier_X \sheaf F\bigr).
    \end{align*}
    By the discussion above,
    \begin{align*}
        \logdim H^{-d^{\max}+1}\bigl(\DVerdier_X \tauls_{\ge j+1} \DVerdier_X \sheaf F\bigr)
        & \le
        \logdim X - (j+1) + d_{\max}
        \\&\lneq
        \logdim H^{-d^{\max}}\bigl(\DVerdier_X \bigl(\lsH^j(\DVerdier_X\sheaf F)[-j] \bigr)\bigr).
    \end{align*}
    Thus the last morphism cannot be injective and hence $H^{-d^{\max}}(\DVerdier_X\tauls_{\le j}\DVerdier_X\sheaf F) \ne 0$.
    Conversely, $\DVerdier_X\tauls_{< j}\DVerdier_X\sheaf F \in \catDQCoh[\ge -d^{\max}]{X}$.
    The long exact sequence coming from the triangle
    \[
        \DVerdier_X\tauls_{\ge j}\DVerdier_X\sheaf F \to \DVerdier_X\DVerdier_X\sheaf F \to 
        \DVerdier_X\tauls_{< j}\DVerdier_X\sheaf F
    \]
    thus contains the sequence
    \[
        0 \to H^{-d^{\max}}(\DVerdier_X\tauls_{\ge j}\DVerdier_X\sheaf F)
        \to H^{-d^{\max}}(\sheaf F).
    \]
    As $d^{\max } > 0$ the term $H^{-d^{\max}}(\sheaf F)$ vanishes by assumption, yielding a contradiction.
\end{proof}

\begin{DefLem}
    For a coherent $\lD_X$-module $\sheaf F$ such that $\DVerdier_X\sheaf F$ is p-coherent let $S_i\sheaf F$ be the image of the canonical morphism
    \[
        H^0(\DVerdier_X \tauls_{\ge -i} \DVerdier_X \sheaf F) \to  H^0(\DVerdier_X \circ  \DVerdier_X \sheaf F) = \sheaf F.
    \]
    Then $S$ is an increasing filtration on $\sheaf F$, which we call the \emph{log Sato-Kashiwara filtration}.
\end{DefLem}

Before we prove that this is actually a filtration, let us state our reason for introducing the log Sato--Kashiwara filtration, i.e.~that it coincides with the log Gabber filtration.

\begin{Thm}\label{thm:G=S}
    Let $\sheaf F$ be a coherent $\lD_X$-module and assume that $\DVerdier_X\sheaf F$ is p-coherent.
    Then log Gabber and log Sato-Kashiwara filtrations of $\sheaf F$ agree, i.e.
    \[
        G_i\sheaf F = S_i\sheaf F \quad \text{for all $i \in  \mathbb{Z}$}.
    \]
\end{Thm}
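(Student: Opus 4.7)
The strategy is to establish the two inclusions $S_i\sheaf F \subseteq G_i\sheaf F$ and $G_i\sheaf F \subseteq S_i\sheaf F$ separately, exploiting in both cases the involutivity of $\DVerdier_X$.

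For $G_i\sheaf F \subseteq S_i\sheaf F$, set $\sheaf G = G_i\sheaf F$, a coherent submodule of $\sheaf F$ with $\logdim\Ch(\sheaf G) \le \logdim X + i$. By Lemma \ref{lem:D_and_logdim_of_ch}, $\DVerdier_X\sheaf G \in \lsDqcoh^{\ge -i}(\lD_X)$. The inclusion $\sheaf G \hookrightarrow \sheaf F$ dualizes to a morphism $\DVerdier_X\sheaf F \to \DVerdier_X\sheaf G$; by the universal property of the perverse truncation $\tauls_{\ge -i}$, this factors uniquely as $\DVerdier_X\sheaf F \to \tauls_{\ge -i}\DVerdier_X\sheaf F \to \DVerdier_X\sheaf G$. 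Applying $\DVerdier_X$ once more and invoking $\DVerdier_X^2 = \mathrm{id}$ yields a factorization
\[
    \sheaf G \to \DVerdier_X\tauls_{\ge -i}\DVerdier_X\sheaf F \to \sheaf F
\]
of the original inclusion. As both $\sheaf G$ and $\sheaf F$ sit in cohomological degree zero, applying $H^0$ produces morphisms $\sheaf G \to H^0(\DVerdier_X\tauls_{\ge -i}\DVerdier_X\sheaf F) \to \sheaf F$ whose composition is still the inclusion; hence $\sheaf G$ lies in the image of the second map, which is $S_i\sheaf F$ by definition.

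For $S_i\sheaf F \subseteq G_i\sheaf F$, since $S_i\sheaf F$ is by construction a quotient of $H^0(\DVerdier_X\tauls_{\ge -i}\DVerdier_X\sheaf F)$, it suffices to bound the log-dimension of the characteristic variety of the latter by $\logdim X + i$. I plan to prove by induction on the perverse amplitude the more general statement
\[
    \logdim\Ch\bigl(H^0(\DVerdier_X\tauls_{\ge n}\DVerdier_X\sheaf F)\bigr) \le \logdim X - n
\]
for every integer $n$, and then specialize to $n = -i$. The base case of vanishing amplitude is trivial. For the inductive step let $m \ge n$ be the smallest perverse degree with $\lsH^m(\DVerdier_X\sheaf F) \ne 0$, and consider the triangle
\[
    \lsH^m(\DVerdier_X\sheaf F)[-m] \to \tauls_{\ge n}\DVerdier_X\sheaf F \to \tauls_{\ge m+1}\DVerdier_X\sheaf F.
\]
Dualizing and reading off the resulting long exact sequence, $\Ch(H^0)$ of the dualized middle term is contained in the union of the corresponding $\Ch$ of the two outer terms. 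The contribution of $\tauls_{\ge m+1}\DVerdier_X\sheaf F$ is controlled by the inductive hypothesis, while $H^0(\DVerdier_X\lsH^m(\DVerdier_X\sheaf F)[m]) = H^m(\DVerdier_X\lsH^m(\DVerdier_X\sheaf F))$ is bounded via Proposition \ref{prop:logdim_of_dual_and_ls} applied to the coherent perverse module $\lsH^m(\DVerdier_X\sheaf F)$, yielding the estimate $\logdim X - m \le \logdim X - n$.

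The main obstacle is the subtlety flagged in the Remark after Theorem \ref{thm:log-perverse-t-structure}: the perverse truncation $\tauls_{\ge n}\DVerdier_X\sheaf F$ is in general only p-coherent and not in $\catDbcohDMod{X}$, so Proposition \ref{prop:logdim_of_dual_and_ls} cannot be applied to it directly. The perverse-filtration induction in Step~1 is precisely the device that circumvents this: it reduces the bound to its consequence for the individual coherent perverse cohomology modules $\lsH^m(\DVerdier_X\sheaf F)$, to which the proposition does apply.
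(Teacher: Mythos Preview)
Your proof is correct, and both inclusions are handled by arguments that differ from the paper's.

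For $G_i\sheaf F \subseteq S_i\sheaf F$, your factorization via the universal property of $\tauls_{\ge -i}$ is cleaner than the paper's approach: the paper proceeds by induction on $i$, using the functoriality of $S_i$ together with the observation that $S_{\logdim\Ch(\sheaf G)-\logdim X}\sheaf G = \sheaf G$ for any coherent $\sheaf G$. Your direct argument avoids the induction entirely.

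For $S_i\sheaf F \subseteq G_i\sheaf F$, the paper is shorter: it applies Proposition~\ref{prop:logdim_of_dual_and_ls} (with $k=0$, $n=-i$) directly to the coherent complex $\tauls_{\ge -i}\DVerdier_X\sheaf F$ to conclude $\logdim\Ch\bigl(H^0(\DVerdier_X\tauls_{\ge -i}\DVerdier_X\sheaf F)\bigr) \le \logdim X + i$ in one line. Your induction on perverse amplitude reaches the same bound, but the extra work is unnecessary. The obstacle you flag is not actually present: the hypothesis that $\DVerdier_X\sheaf F$ is p-coherent means that each $\lsH^m(\DVerdier_X\sheaf F)$ is coherent, and since $\tauls_{\ge n}\DVerdier_X\sheaf F$ is an iterated extension of finitely many such shifted perverse cohomology objects, it lies in $\catDbcohDMod{X}$ (this is exactly what the paper uses when it says, in the proof that $S_\bullet$ is a filtration, that ``$\tauls_{\ge -i}$ preserves coherence''). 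So Proposition~\ref{prop:logdim_of_dual_and_ls} applies directly, and your induction, while correct, is a detour.
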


\begin{Lem}\label{lem:dual_lsH_dual_vanishing}
    Suppose $\sheaf F \in  \catDModcoh{X}$ is a coherent $\lD_X$-module such that $\DVerdier_X$ is p-coherent.
    Then 
    \[
        \DVerdier_X\bigl(\lsH^\ell(\DVerdier_X\sheaf F)\bigr) \in  \catDcohDMod[\ge \ell]{X}
    \]
    for all $\ell \in  \mathbb{Z}$.
\end{Lem}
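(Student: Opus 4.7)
The plan is to reduce the conclusion to a classical dimension bound on the characteristic variety of $\sheaf H := \lsH^\ell(\DVerdier_X \sheaf F)$, and then to extract that bound from Proposition~\ref{prop:logdim_of_dual_and_std}. By the p-coherence hypothesis, $\sheaf H$ is a coherent $\lD_X$-module (concentrated in a single cohomological degree), so it falls within the scope of the abstract machinery of Section~\ref{sec:almost_commutative}.

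First I would unwind the definition $\DVerdier_X = \sheafHom_{\lD_X}({-},\sheaf R)[-\logdim X] \otimes \omega_X^\vee$. Absorbing the shift by $-\logdim X$ and the line-bundle twist, the desired assertion $\DVerdier_X \sheaf H \in \catDcohDMod[\ge \ell]{X}$ is equivalent to $\sheafHom_{\lD_X}(\sheaf H, \sheaf R)$ being concentrated in degrees $\ge \ell - \logdim X$. Since $\sheaf H$ is a single coherent module, Proposition~\ref{prop:nc_dual_vanishing_and_support}\ref{prop:nc_dual_vanishing_and_support:i} turns this into the purely geometric inequality
\[
    \dim \Ch(\sheaf H) \le \logdim X - \ell.
\]

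To produce this bound, observe that $\sheaf F \in \catDcohDMod[\ge 0]{X}$ trivially. Applying Proposition~\ref{prop:logdim_of_dual_and_std} with $n = 0$ (which requires precisely the hypotheses in force) yields the stronger log-dimension bound $\logdim \Ch(\sheaf H) \le \logdim X - \ell$. The required classical bound then follows from the elementary inequality $\logdim \ge \dim$, which holds because the $k = 0$ term of the maximum defining the log dimension already contributes $\dim Z + r_X$ for any closed subset $Z \subseteq T^*X$.

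There is no genuine obstacle here: essentially all of the substance has been packaged into Proposition~\ref{prop:logdim_of_dual_and_std}, and the remainder is bookkeeping with the normalizing shift built into $\DVerdier_X$. The main point to be careful about is that one must invoke Proposition~\ref{prop:logdim_of_dual_and_std}, rather than the earlier Proposition~\ref{prop:logdim_of_dual_and_ls}, because we need a bound on the log dimension of $\Ch(\lsH^\ell(\DVerdier_X \sheaf F))$ itself, not merely on its restrictions to log strata.
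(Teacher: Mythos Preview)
Your argument has a genuine gap at the point where you assert that $\sheaf H := \lsH^\ell(\DVerdier_X\sheaf F)$ is ``a coherent $\lD_X$-module (concentrated in a single cohomological degree)''. The p-coherence hypothesis only tells you that $\sheaf H$ is a coherent \emph{complex}; it lies in the heart of the \emph{log perverse} t-structure, not the standard one. In general $\sheaf H$ has standard cohomology spread over several degrees (already on the log line, objects such as $\CC_0[-1]$ are log perverse). Consequently Proposition~\ref{prop:nc_dual_vanishing_and_support}\ref{prop:nc_dual_vanishing_and_support:i} does not apply, and the single inequality $\dim \Ch(\sheaf H) \le \logdim X - \ell$ is too weak: to conclude $\DVerdier_X\sheaf H \in \catDcohDMod[\ge \ell]{X}$ via Corollary~\ref{cor:nc_dual_vanishing_and_support_derived} you would need the degree-by-degree bounds $\dim \Ch(H^j(\sheaf H)) \le \logdim X - \ell - j$ for all $j$, and the blanket inequality $\dim \le \logdim$ does not produce the extra $-j$.

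The missing ingredient is precisely the log perversity of $\sheaf H$. Since $\sheaf H \in \lsDqcoh^{\le 0}(\lD_X)$, one has $\supp H^{j}(\sheaf H) \subseteq X^{j-r_X}$, hence $\Ch(H^j(\sheaf H)) \subseteq \res{T^*X}{X^{j-r_X}}$; combining this support condition with $\logdim \Ch(\sheaf H) \le \logdim X - \ell$ yields $\dim \Ch(H^j(\sheaf H)) \le \logdim X - \ell - j$, and then Corollary~\ref{cor:nc_dual_vanishing_and_support_derived} (equivalently Proposition~\ref{prop:Xk_local_cohomology_and_support} with $k=0$) gives the desired vanishing. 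This is exactly the content of Lemma~\ref{lem:D_and_logdim_of_ch_2}, and the paper's proof simply chains Proposition~\ref{prop:logdim_of_dual_and_std} with that lemma. Your overall strategy is the right one; you just need to replace the incorrect ``single module'' step by an appeal to Lemma~\ref{lem:D_and_logdim_of_ch_2}.
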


\begin{proof}
    Follows from Proposition~\ref{prop:logdim_of_dual_and_std} and Lemma~\ref{lem:D_and_logdim_of_ch}.
\end{proof}

\begin{proof}[Proof that $S_i$ is an increasing filtration]
    To start, let us remark that since $\DVerdier_X\sheaf F$ is p-coherent and hence $\tauls_{\ge -i}$ preserves coherence, $S_i\sheaf F$ is indeed well-defined.
    Let us note that by Lemma~\ref{lem:D_and_logdim_of_ch}, $S_{\logdim \Ch(\sheaf F)}\sheaf F = \sheaf F$.
    We have to show that $S_i\sheaf F \subseteq  S_{i+1}\sheaf F$ for all $i$.
    Consider the distinguished triangle
    \[
        \lsH^{-(i+1)}(\DVerdier \sheaf F)[i+1] \to  
        \tauls_{\ge -(i+1)}(\DVerdier \sheaf F) \to  
        \tauls_{\ge -i}(\DVerdier \sheaf F),
    \]
    apply duality and consider the corresponding long exact sequence on cohomology
    \begin{align*}
        H^{-(i+1)-1}\bigl(\DVerdier_X(\lsH^{-(i+1)}(\DVerdier \sheaf F))\bigr) &\to
        H^0\bigl(\DVerdier_X\tauls_{\ge -i}(\DVerdier \sheaf F)\bigr) \\ &\to
        H^0\bigl(\DVerdier_X\tauls_{\ge -(i+1)}(\DVerdier \sheaf F)\bigr).
    \end{align*}
    The first term vanishes by Lemma~\ref{lem:dual_lsH_dual_vanishing}.
    Thus we see that $S_i\sheaf F \subseteq  S_{i+1}\sheaf F$.
\end{proof}

We point out that from the proof it follows in particular that the morphism $H^0(\DVerdier_X \tauls_{\ge -i} \DVerdier_X \sheaf F) \to  \sheaf F$ defining the log Sato--Kashiwara filtration is injective.

\begin{proof}[Proof of Theorem~\ref{thm:G=S}]
    Fix a coherent $\lD_X$-module $\sheaf F$.
    By Proposition~\ref{prop:logdim_of_dual_and_ls} we have
    \[
        \logdim \Ch(S_i\sheaf F) = 
        \logdim \Ch\bigl(H^0(\DVerdier_X\tauls_{\ge -i} \DVerdier_X \sheaf F)\bigr)
        \le \logdim X + i,
    \]
    or in other words $S_i\sheaf F \subseteq G_i\sheaf F$.

    Let us now show that $G_i\sheaf F \subseteq  S_i\sheaf F$ by induction on $i$.
    By the log Bernstein inequality and Lemma~\ref{lem:D_left_t-exact} respectively, both filtrations vanish for $i < 0$, providing the base case.
    Assume now that $G_{i-1}\sheaf F \subseteq S_{i-1}\sheaf F$.
    If $G_{i-1}\sheaf F = G_i\sheaf F$, then by induction 
    \[
        G_i\sheaf F = G_{i-1}\sheaf F \subseteq  S_{i-1}\sheaf F \subseteq  S_i \sheaf F.
    \]
    So assume that $G_{i-1}\sheaf F \subsetneq G_i\sheaf F$.
    The definition of $S_i$ is functorial, so that the inclusion $G_i\sheaf F \hookrightarrow \sheaf F$ gives 
    \[
        S_i(G_i\sheaf F) \hookrightarrow S_i(\sheaf F)
    \]
    and it suffices to show that $S_i(G_i\sheaf F) = G_i \sheaf F$.
    As $G_{i-1}\sheaf F \subsetneq G_i\sheaf F$ we see that $\logdim \Ch(G_i\sheaf F)$ has to be equal to $\logdim X + i$.
    Thus the result follows from $S_{\logdim \Ch(\sheaf G) - \logdim X}\sheaf G = \sheaf G$ for any coherent $\sheaf G$ (cf.~Lemma~\ref{lem:D_and_logdim_of_ch}).
\end{proof}

\printbibliography

\end{document}